\def\dv{\mbox{div}}
\def\rot{\mbox{rot}}
\def\Supp{\mathop{\rm Supp}\nolimits\ }
\theoremstyle{plain}
\newtheorem{theorem}{Theorem}[section]
\newtheorem{lemma}{Lemma}[section]
\newtheorem{corollary}{Corollary}[section]
\newtheorem{definition}{Definition}[section]
\newtheorem{proposition}{Proposition}[section]
\newtheorem{Rema}{Remark}[section]
\newcommand{\ZZ}{\mathbb{Z}}  
\newcommand{\NN}{\mathbb{N}}
\newcommand{\RR}{\mathbb{R}}
\date{}
\title[ Helicoidal solution for the Euler system]
{Global well-posedness of helicoidal Euler equations}
\author[H.  Abidi]{Hammadi Abidi}
\address[H.  Abidi]{D\'epartement de Math\'ematiques
Facult\'e des Sciences de Tunis Campus universitaire 2092 Tunis,
Tunisia} \email{habidi@univ-evry.fr}
\author[S. Sakrani]{Saoussen Sakrani} \address[S. Sakrani]{D\'epartement de Math\'ematiques
Facult\'e des Sciences de Tunis Campus universitaire 2092 Tunis,
Tunisia} \email{saoussensakrani2012@gmail.com}
\begin{document}
\maketitle

\begin{abstract}
This paper deals with the global existence and uniqueness results for the three-dimensional incompressible Euler equations with a particular structure  for initial data lying in critical spaces. In this case the BKM  criterion is not known.
\end{abstract}

\,

{\bf{AMS Subject Classifications}}~: 76D03 (35B33 35Q35 76D05)\\
{\bf{Keywords}}~: Helicoidal Euler equation; Global existence; Uniqueness.

\section{Introduction}
The purpose of this paper is to investigate the global well-posedness of the following three-dimensional incompressible Euler system in the whole space with helicoidal initial data. This system is described as follows:
$$
{\rm(E)}\quad
\left\{\begin{array}{l}
\displaystyle\partial_t u +(u\cdot\nabla) u +\nabla\Pi =0,\qquad (t,x)\in\RR^+\times\RR^3,\\
\displaystyle{\mathop{\rm div}}\,u =0, \\
\displaystyle u_{|t=0}=u^0.
\end{array}
\right.
$$
Here,  the vector field $u=(u_1,u_2,u_3)$ is the velocity of the fluid and $\Pi$ is a scalar pressure function.
The operator $u.\nabla$ is given explicitly by $u.\nabla =\displaystyle{\sum_{j=1}^3} u_j\partial_j$ and the incompressibility of the fluid is expressed via the second equation of the system ${\mathop{\rm div}}\,u=\displaystyle{\sum_{j=1}^3} \partial_ju_j=0$.\\
The question of local or global existence and uniqueness of solutions to the system $(\rm{E})$ is one of the most important problems in fluid mechanics.
Existence and uniqueness theories of ($2$ or $3$ dimensional) Euler equations have been studied by many mathematicians and physicists. W. Wolibner \cite{WO} started the subject in H\"older spaces, D. Ebin \cite{ebin}, J. Bourguignon \cite{bourguignon}, R. Temam \cite{temam}, T. Kato and G. Ponce \cite{ponce} worked out this subject in Sobolev spaces. Much of the studies on the Euler equations of an ideal incompressible fluid in Besov spaces has been done by M. Vishik (\cite{vishik}, \cite{vishik1}, \cite{vishik2}), D. Chae \cite{chae} and C. Park and J. Park \cite{park} . \\
The question of global existence (even for smooth initial data) is still open and continues
to be one of the most challenging problems in nonlinear PDEs. The degree of
difficulties depends strongly on the dimensions (2 or 3) and the regularity of the initial
data. In this context, the vorticity play a fundamental role. In fact, the
well-known BKM criterion \cite{majda} ensures that the development of finite time singularities
for Kato's solutions is related to the blowup of the $L^{\infty}$ norm of the vorticity near
the maximal time existence. In $2$-D, the vorticity satisfies a transport equation
$$
\partial_t \omega +(u\cdot\nabla) \omega=0.
$$
In space dimension three, the vorticity satisfies the equation
\begin{equation}\label{TR}
\partial_t \omega +(u\cdot\nabla) \omega=(\omega\cdot\nabla)u
\end{equation}
and the main difficulty for establishing global regularity is to understand how the
vortex-stretching term $(\omega\cdot\nabla)u$ affects the dynamic of the fluid. While the question of global existence for $3$-D Euler system is widely open, some
positive results are available for the $3$-D flows with some geometry constraints as the
so-called axisymmetric flows without swirl.
 We say that a vector field u is axisymmetric if it has the form :
$$
u(x, t) = u_r(r, z , t)e_r + u_z (r, z , t)e_z,\quad x=(x_{1},x_{2},z),\quad r=({x_{1}^2+x_{2}^2})^{\frac12},
$$ 
where  $\big(e_r, e_{\theta} , e_z\big)$ is the cylindrical basis of $\RR^3$ and the components $u_r$ and $u_z$ do not depend on the angular variable.  
The main feature of axisymmetric flows arises in the vorticity which takes the form
$$
\omega=(\partial_zu^r-\partial_ru^z) e_{\theta},
$$
 and satisfies 
  \begin{equation}
 \label{tourbillon}
\partial_t \omega +(u\cdot\nabla)\omega =\frac{u^r}{r}\omega.  
\end{equation}
Consequently the quantity $\alpha:=\omega/r$ satisfies
\begin{equation}
\label{equation_importante}
\partial_t\alpha+(u\cdot\nabla)\alpha=0,
\end{equation}
which induces the conservation of all its $L^ p$ norms for every $p\in[1,\infty].$ 
Ukhovskii and Yudovich \cite{ivdovich} took advantage of these conservation laws to
prove the global existence for axisymmetric initial data with finite energy and satisfying
in addition $\omega^{0}\in L^2\cap L^\infty$ and $\frac{\omega^0}{r}\in L^2\cap L^\infty.$ In terms of Sobolev regularity these assumptions are satisfied if the velocity $u_{0}\in H^s$ with $s>{7\over 2}$.   This is far from critical regularity of local existence theory $s=\frac52.$ The optimal result in Sobolev spaces is done by Shirota and Yanagisawa \cite{shirota} who proved global existence in $H^s$, with  $s>\frac52.$ 
In a recent work,  R. Danchin \cite{rd} has weakened the Ukhoviskii and Yudovich conditions.   More precisely, he obtain the global existence and uniqueness for initial data $\omega^0\in L^{3,1}\cap L^\infty$ and $\frac{\omega^0}{r}\in L^{3,1}.$ 
Recently, in \cite{taoufik} the first author and his collaborators proved the global existence to the system $(\rm{E})$ for initial data $u^0\in B_{p,1}^{1+\frac{3}{p}}$ and $\frac{\omega^0}{r}\in L^{3,1}.$
\;

In the same context (i.e geometric contraints), Dutrifoy was interested in this question and he was published several papers, in \cite{dutrifoy}, he proved global existence to the incompressible Euler equations with a particular geometric structure, the focus is on so-called helicoidal solutions. In \cite{rd}, Danchin proved too global existence for helicoidal initial data and the aim in this paper is to prescribe regularity conditions on the vorticity.
\begin{definition}\label{he}
Let $k$ be a nonnegative real number. We say that a vector field $u=u_re_r+u_{\theta} e_{\theta}+u_ze_z$ is helicoidal  if:\\
1)The components $u_r,u_{\theta}$ et $u_z$ of $u$ are constant on helicoids $z=z_0+k\theta$ et $r=r_0$.\\
2) At every point of $\RR^3$ the vector field $u$ is orthogonal to $h:=re_{\theta}+ke_z.$
\end{definition}
We note that the limit case  $k= 0$ corresponds to the definition of an axisymmetric vector field.\\
The main characteristic of helical flows is the vorticity takes the following form:
$$
k\omega =h\omega_z\qquad\mbox{and}\qquad
\partial_t\omega_z+(u\cdot\nabla)\omega_z=0
$$
where $\omega_z$ is the vertical component of the vorticity. 
Thus
$$
\frac{|\omega(t,\psi(t,x))|}{\sqrt{k^2+r^2(\psi(t,x))}}=\frac{|\omega(0,x)|}{\sqrt{k^2+r^2}}
\quad\mbox{where $\psi$ is the flow associated to  velocity $u$}.
$$
In this paper we shall not be interested in the dependence with regard to $k$ quantities to be measured, and we shall thus suppose to simplify that $k=1.$
Our main result in this paper is concerning the unique solvability of ${\rm(E)}$ with the initial data helicoidal in the critical Besov spaces
(for the definition see the next section). Here and in what follows, we shall always denote $(1,x,y)f=(f,xf,yf).$ More precisely we obtain the following result.
\begin{theorem}\label{globale}
 Let $u^0$ be an helicoidal  divergence free vector field with 
 $(1,x,y)u^0\in L^2(\RR^2\times]-\pi,\pi[),$ such that its vorticity satisfies 
$\omega^0\in\mathscr{\dot B}^{0}_{2,1}(\RR^2\times]-\pi,\pi[),$ 
$(1,x,y)\omega^0\in\mathscr{\dot B}^{0}_{\infty,1}(\RR^2\times]-\pi,\pi[)$ and
$\omega^0_z\in\mathscr{\dot B}^{0}_{1,1}(\RR^2\times]-\pi,\pi[).$  Then the system (E) has a unique global solution $(1,x,y)u\in L^\infty_{loc}(\RR_+;\,L^2)$ such that
$\omega\in L^\infty_{loc}(\RR_+;\,\mathscr{\dot B}^{0}_{2,1}),$
$(1,x,y)\omega\in L^\infty_{loc}(\RR_+;\,\mathscr{\dot B}^{0}_{\infty,1})$ and
$\omega_z\in L^\infty_{loc}(\RR_+;\,\mathscr{\dot B}^{0}_{1,1}).$
Moreover for every $t\in\RR_+$
$$
\|(1,x,y)u(t)\|_{L^2}\le C_0e^{C_0t}
$$
and
$$
||\omega_z(t)||_{\mathscr{\dot B}^{0}_{1,1}}+
||\omega(t)||_{\mathscr{\dot B}^{0}_{2,1}}
+||(1,x,y)\omega(t)||_{\mathscr{\dot B}^{0}_{\infty,1}}\leq C_0 e^{exp(e^{C_0t})}
$$
where $C_0$ depends on the norms of $u^0.$
\end{theorem}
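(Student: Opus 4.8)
The plan is to exploit the defining helicoidal structure to reduce the full three–dimensional vorticity dynamics — in particular the vortex–stretching term $(\omega\cdot\nabla)u$ responsible for the absence of a usable BKM criterion — to a single transported scalar. Writing $h=re_\theta+e_z=(-y,x,1)$, the definition forces $\omega=h\,\omega_z=\omega_z(-y,x,1)$, so that $|\omega|=\sqrt{r^2+1}\,|\omega_z|$; differentiating the helicoidal constraint, which gives $(-y\partial_x+x\partial_y+\partial_z)u=(-u_2,u_1,0)$, shows that the stretching term collapses to the zeroth–order (in $\nabla u$) expression $\omega_z(-u_2,u_1,0)$, and consequently $\omega_z$ solves the pure transport equation $\partial_t\omega_z+u\cdot\nabla\omega_z=0$. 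Since $u$ is divergence free, this immediately yields the conservation $\|\omega_z(t)\|_{L^p}=\|\omega_z^0\|_{L^p}$ for every $p\in[1,\infty]$, which plays the role of the two–dimensional vorticity conservation and is the true source of global control. The whole argument is thus organized around propagating weighted and Besov norms of the scalar $\omega_z$, reconstructing $\omega$ and $u$ from it a posteriori.

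With this reduction I would first set up a symmetry–preserving approximation scheme (mollified data together with a Friedrichs/spectral truncation of the nonlinearity), check that the regularized flow keeps the helicoidal structure, derive uniform estimates on a time interval, and pass to the limit by compactness. The a priori estimates split into two layers. The first, at the level of energy, uses conservation of $\|u\|_{L^2}$ together with moment identities: testing the momentum equation against $x^2u$ and $y^2u$ and integrating by parts produces cubic terms of the type $\int r u_r|u|^2$ and pressure terms $\int r u_r\Pi$, which I would control using $\|u\|_{L^\infty}$ (bounded through Biot–Savart by conserved norms) and the Calderón–Zygmund bound $\|\Pi\|_{L^2}\lesssim\|u\|_{L^4}^2$. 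A Gronwall inequality then yields the single–exponential estimate $\|(1,x,y)u(t)\|_{L^2}\le C_0e^{C_0t}$.

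The second, harder, layer propagates the critical Besov regularity. Via the Biot–Savart law $u$ is recovered from $\omega$ by an operator of order zero, so $\|\nabla u\|_{L^\infty}\lesssim\|\nabla u\|_{\dot B^{0}_{\infty,1}}\lesssim\|\omega\|_{\dot B^{0}_{\infty,1}}$, and multiplying the weights $(1,x,y)$ against $\omega=\omega_z(-y,x,1)$ lets one close the three transport equations for $\omega_z\in\dot B^{0}_{1,1}$, $\omega\in\dot B^{0}_{2,1}$ and $(1,x,y)\omega\in\dot B^{0}_{\infty,1}$ as a coupled system. The key technical input is the transport estimate at the endpoint regularity $s=0$: at this index the commutator/paraproduct analysis is not a plain Gronwall and produces a logarithmic loss, so that the gradient acts on the Besov norms effectively through a factor $\log\bigl(e+\|\omega\|_{\dot B^{0}_{\infty,1}}\bigr)$. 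Feeding this back into the coupled transport inequalities, and accounting for the extra polynomial weights $(1,x,y)$ which add a further logarithmic layer, the integration produces the iterated bound $e^{\exp(e^{C_0t})}$. This closing of the logarithmically lossy critical estimates for the three coupled weighted quantities is the principal obstacle; everything downstream is bootstrap.

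Finally, uniqueness is comparatively soft once $\nabla u\in L^1_{loc}(\RR_+;L^\infty)$ is available: for two solutions with the same data the difference $\delta u$ is divergence free, the transport and pressure contributions drop upon testing the difference equation against $\delta u$, and one is left with $\tfrac{d}{dt}\|\delta u\|_{L^2}^2\lesssim\|\nabla u_2\|_{L^\infty}\|\delta u\|_{L^2}^2$; Gronwall together with the time–integrability of $\|\nabla u_2\|_{L^\infty}$ and $\delta u(0)=0$ forces $\delta u\equiv0$.
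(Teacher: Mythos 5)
Your geometric reduction ($\omega=\omega_z(-y,x,1)$, pure transport of $\omega_z$ and conservation of its $L^p$ norms), your moment estimates for $\|(1,x,y)u\|_{L^2}$, your approximation scheme, and your uniqueness argument all agree with the paper (Propositions \ref{wassim}, \ref{mahdi}, Corollary \ref{C1}, Proposition \ref{henda}, and the final subsection). The gap is exactly at the step you yourself call ``the principal obstacle'': the propagation of the critical Besov norms. You assert that at $s=0$ the commutator/paraproduct analysis ``produces a logarithmic loss'', so that $\nabla u$ acts on the critical norms through a factor $\log\bigl(e+\|\omega\|_{\dot B^{0}_{\infty,1}}\bigr)$. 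No such estimate is available, and no justification for it is given. What plain commutator analysis yields at $s=0$ (the paper's Proposition \ref{Commutateur}(ii) and Corollary \ref{saoussen}, since $0\in\,]-1,1[$) is an \emph{exponential} loss $\|\omega(t)\|_{\dot B^{0}_{\infty,1}}\lesssim \|\omega^0\|_{\dot B^{0}_{\infty,1}}e^{CU(t)}$ with $U(t)=\int_0^t\|\nabla u\|_{L^\infty}d\tau$. Combined with the Biot--Savart bound $\|\nabla u\|_{L^\infty}\lesssim\|\omega\|_{\mathscr{\dot B}^{0}_{\infty,1}}$ this gives $U'\le C_0e^{CU}$, a Riccati-type inequality whose bound blows up in finite time, so the argument does not close globally. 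The BKM-type logarithmic interpolation you implicitly invoke, $\|\nabla u\|_{L^\infty}\lesssim\|\omega\|_{L^\infty}\log(e+\cdot)$, requires a \emph{subcritical} norm (e.g.\ $B^{s}_{\infty,\infty}$ with $s>0$) inside the logarithm, and no such norm is assumed or propagated here; this is precisely why, as the abstract stresses, the BKM criterion is not known in this setting.

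What the paper actually does at this point (Propositions \ref{lipsh}, \ref{PC}, \ref{RD}, Corollary \ref{NB}, Proposition \ref{p}), following Hmidi--Keraani, is to linearize: it decomposes the initial vorticity into horizontal-frequency-localized, vertical-Fourier-mode pieces, lets $\tilde\omega_{q,n}$ solve the corresponding (linear, once $u$ is given) transport systems, so that $\omega=\sum_{(q,n)\in\ZZ^2}\tilde\omega_{q,n}e^{inz}$ by uniqueness, and proves that each piece stays essentially localized in frequency, $\|\dot\Delta_j\tilde\omega_{q,n}(t)\|_{L^\infty}\lesssim 2^{-|j-q|}e^{CU(t)}\bigl(\|\dot\Delta_q\omega^0_n\|_{L^\infty}+c_q\|\omega^0_{z,n}\|_{\dot B^0_{2,1}}\bigr)$, by running Gronwall not in $\dot B^{0}_{\infty,1}$ but in the two ``wing'' spaces $\dot B^{-1}_{\infty,\infty}$ and $\dot B^{1}_{\infty,1}$, where Corollary \ref{saoussen} is legitimate. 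Summing in $j,q$ after splitting into $|j-q|\ge N$ and $|j-q|<N$ and optimizing $N\approx U(t)$ converts the exponential off-diagonal loss into a loss \emph{linear} in $U(t)$; only then does Gronwall, together with Lemmas \ref{LP} and \ref{RF} which bound $U'$ by the critical norms being propagated, close and produce the triple exponential $C_0e^{\exp(e^{C_0t})}$. Your proposal contains neither this decomposition nor any workable substitute for it, so the central estimate of the theorem remains unproved.
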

\begin{Rema}
According to \cite{rd},  $u^0\in L^2(\RR^2\times]-\pi,\pi[)$ can be replaced by
$\omega^0\in L^{2,1}(\RR^2\times]-\pi,\pi[).$ 
\end{Rema}
\noindent{\bf Scheme of the proof and organization of the paper.} 
The main difficulty is the proof of Theorem \ref{globale} lies in the fact that when the initial data belongs to critical spaces, we can not use  the Beale-Kato-Majda  criterion. Thus, we owe controlled $\|\nabla u\|_{L^\infty(\RR^2\times]-\pi,\pi[)},$ which is bounded by
$\sum_{n\in\ZZ}\|\omega_n\|_{\dot B^0_{\infty,1}(\RR^2)}$ (where $\omega_n$ is the Fourier coefficients of $\omega$ see Lemma \ref{LP}). 
For that we shall rewrite \eqref{TR} (see Corollary \ref{C1})
$$
\partial_t\omega+(\widetilde{u}\cdot\nabla_h)\omega=\left(\begin{array}{c}-\omega_z
\widetilde u_2 \\ \omega_z\widetilde u_ 1\\0\end{array}\right),
$$
where we denote
$$
\widetilde{u}=(u_1+yu_3,u_2-xu_3),\quad \nabla_h=(\partial_x,\partial_y)\quad
\mbox{and}\quad
\omega=(\omega_1,\omega_2,\omega_z)=(-y\omega_z,x\omega_z,\omega_z).
$$
Motivated by \cite{taoufik, hmidi}, for some $n\in\ZZ,$ let $\tilde\omega_{n}$ 
solves the following system
$$
\left\{\begin{array}{l}
\displaystyle\partial_t \tilde\omega_{1,n} +(\tilde u\cdot\nabla_h)\tilde\omega_{1,n} 
 =-\tilde\omega_{z,n}\tilde u_2,\\
\displaystyle\partial_t \tilde\omega_{2,n} +(\tilde u\cdot\nabla_h)\tilde\omega_{2,n} 
 =\tilde\omega_{z,n}\tilde u_1,\\
\displaystyle\partial_t\tilde\omega_{z,n} +(\tilde u\cdot\nabla_h)\tilde\omega_{z,n} =0,\\
\tilde \omega_{n|t=0}=\tilde \omega_{n}(0)
\end{array} 
\right.
$$
where 
$$
\tilde \omega_{n}(0)=\begin{pmatrix}-y\omega^0_{n,z}\\ 
x\omega^0_{n,z}\\ 
\omega^0_{n,z}\\ 
\end{pmatrix},
\qquad
\qquad\mbox{and}\qquad
\omega^0_{n,z}
=\frac{1}{2\pi}\int_{-\pi}^{\pi}\omega^0_{z}e^{-inz}dz
=\partial_xu^0_{n,2}-\partial_yu^0_{n,1}.
$$
By Proposition \ref{Fourier}, we deduce that $\tilde\omega_{n}$ is the Fourier coefficients of $\omega,$ i.e, $\tilde\omega_{n}=\omega_{n}.$ Thus
$
\omega=\sum_{n\in\ZZ}\omega_{n}e^{inz}.
$
Finally to control $\|\omega_n\|_{\dot B^0_{\infty,1}},$ we will use a new approach similar to  \cite{hmidi}, which consists to  linearize properly the Fourier  of transport equation. For that, we will localize in frequency the initial data and denote by $\tilde\omega_{q,n}$   the unique global vector-valued solution of the problem 
$$
\left\{\begin{array}{l}
\displaystyle\partial_t \tilde\omega_{1,q,n} +(\tilde u\cdot\nabla_h)\tilde\omega_{1,q,n} 
 =-\tilde\omega_{z,q,n}\tilde u_2,\\
\displaystyle\partial_t \tilde\omega_{2,q,n} +(\tilde u\cdot\nabla_h)\tilde\omega_{2,q,n} 
 =\tilde\omega_{z,q,n}\tilde u_1,\\
\displaystyle\partial_t\tilde\omega_{z,q,n} +(\tilde u\cdot\nabla_h)\tilde\omega_{z,q,n} =0,\\
\tilde \omega_{q,n|t=0}=\tilde \omega_{q,n}(0)
\end{array} 
\right.
$$
where 
$$
\tilde \omega_{q,n}(0)=\begin{pmatrix}-y\dot \Delta_q \omega^0_{n,z}\\ 
x\dot \Delta_q \omega^0_{n,z}\\ 
\dot \Delta_q \omega^0_{n,z}\\ 
\end{pmatrix},
\qquad
\mbox{and}\qquad
\dot \Delta_q \omega^0_{n,z}
=\frac{1}{2\pi}\int_{-\pi}^{\pi}\dot \Delta_q \omega^0_{z}e^{-inz}dz
=\partial_x\dot\Delta_qu^0_{n,2}-\partial_y\dot\Delta_qu^0_{n,1}.
$$
In the second section, we shall collect some basic facts on
Littlewood-Paley analysis;
then  in section 3 is devoted to the study of some geometric properties
of any solution to a vorticity equation model; finally in the last section, \mbox{we prove Theorem \ref{globale}.}
\medbreak \noindent{\bf Notations:}  Let $A, B$ be two operators, we
denote $[A,B]=AB-BA,$ the commutator between $A$ and $B$. For
$a\lesssim b$, we mean that there is a uniform constant $C,$ which
may be different on different lines, such that $a\leq Cb$. For $X$ a Banach space and $I$ an interval of $\RR,$ we denote by
${\mathscr{C}}(I;\,X)$ the set of continuous functions on $I$ with
values in $X.$  For $q\in[1,+\infty],$ the
notation $L^q(I;\,X)$ stands for the set of measurable functions on
$I$ with values in $X,$ such that $t\longmapsto\|f(t)\|_{X}$ belongs
to $L^q(I).$
We always denote the Fourier transform of a function $u$ by
$\hat{u}$ or $\mathcal{F}(u).$
\section{The functional tool box}

The proof  of Theorem \ref{globale} requires a dyadic decomposition
of the Fourier variables, or Littlewood-Paley decomposition (see \cite{bhouri}).  Let $\varphi\in\mathcal{S}(\RR^2)$  be smooth function supported in 
$\mathcal{C}= \{\xi\in\RR^2,\frac{3}{4}\leq|\xi|\leq\frac{8}{3}\}$  such that
\begin{equation*}
 \sum_{q\in\ZZ}\varphi(2^{-q}\xi)=1 \quad\hbox{for}\quad \xi\neq 0.
\end{equation*}
For every $u\in{\mathcal S}'(\RR^2)$ one defines the homogeneous 
Littlewood-Paley operators by 
$$
\forall q\in\ZZ,\quad \dot\Delta_qu=\varphi(2^{-q}\textnormal{D})
u\hspace{1cm}\mbox{and} \hspace{1cm} \dot S_qu=\sum_{j\leq
q-1}\dot\Delta_{j}u.
$$
We notice that these operators can be written as a convolution.   
For example for $q\in\ZZ,\,$ \mbox{$\dot\Delta_{q}u=2^{2q}h(2^q\cdot)\star u,$}
 where $h\in\mathcal{S}$ and $\widehat{h}(\xi)=\varphi(\xi). $\\
 We have the formal decomposition
\begin{equation*}
u=\sum_{q\in\ZZ}\dot\Delta_q\,u,\quad\forall\,u\in {\mathcal
{S}}'(\RR^2)/{\mathcal{P}}[\RR^2],
\end{equation*}
where ${\mathcal{P}}[\RR^2]$ is the set of polynomials  (see \cite{PE}). Moreover, the Littlewood-Paley decomposition satisfies
the property of almost orthogonality:
\begin{equation}\label{Pres_orth}
\dot\Delta_k\dot\Delta_q u\equiv 0 \quad\mbox{if}\quad| k-q|\geq 2
\quad\mbox{and}\quad\dot\Delta_k(\dot S_{q-1}u\dot\Delta_q u) \equiv
0\quad\mbox{if}\quad| k-q|\geq 5.
\end{equation}
We recall now the definition of homogeneous Besov type spaces from
\cite{bhouri}.
\begin{definition}\label{Besov}
{\sl  Let $(p,r)\in[1,+\infty]^2,$ $s\in\RR$ and $u\in{\mathcal
S}'(\RR^2),$ we set
$$
\|u\|_{\dot B^s_{p,r}}=\Big(2^{qs}\|\dot\Delta_q
u\|_{L^{p}}\Big)_{\ell ^{r}}.
$$
\begin{itemize}

\item
For $s<\frac{2}{p}$ (or $s=\frac{2}{p}$ if $r=1$), we  define $\dot
B^s_{p,r}(\RR^2)=\big\{u\in{\mathcal S}'(\RR^2)\;\big|\; \Vert
u\Vert_{\dot B^s_{p,r}}<\infty\big\}.$

\item
If $k\in\NN$ and $\frac{2}{p}+k\leq s<\frac{2}{p}+k+1$ (or
$s=\frac{2}{p}+k+1$ if $r=1$), then $\dot B^s_{p,r}(\RR^2)$ is
defined as the subset of distributions $u\in{\mathcal S}'(\RR^2)$
such that $\partial^\beta u\in\dot B^{s-k}_{p,r}(\RR^2)$ whenever
$|\beta|=k.$
\end{itemize}}
\end{definition}

\begin{Rema}\label{rmk1.1}
\begin{enumerate}
  \item We point out that if $s>0$ then $B^s_{p,r}=\dot B^s_{p,r}\cap L^p$ and
$$
\|u\|_{B^s_{p,r}}\approx \|u\|_{\dot B^s_{p,r}}+\|u\|_{L^p}
$$
with $B^s_{p,r}$ being the non-homogeneous Besov space.
  \item It is easy to verify that the homogeneous Besov
space $\dot{B}^s_{2,2}(\RR^2)$ coincides with the classical
homogeneous Sobolev space $\dot{H}^{s}(\RR^2)$ and
$\dot{B}^s_{\infty,\infty}(\RR^2)$ coincides with the classical
homogeneous H\"older space $\dot{C}^s(\RR^2)$  when $s$ is not
positive integer, in case $s$ is a nonnegative integer,
$\dot{B}^s_{\infty,\infty}(\RR^2)$ coincides with the classical
homogeneous Zygmund space $\dot{C}^s_{\ast}(\RR^2).$
  \item Let $s\in \mathbb{R}, 1\le p,r\le\infty$, and $u \in
{\mathcal S}'(\RR^2).$ Then $u$ belongs to $\dot{B}^{s}_{p, r}(\RR^2)$ if and
only if there exists $\{c_{j, r}\}_{j \in \mathbb{Z}} $ such that
$\|c_{j, r}\|_{\ell^{r}} =1$ and
\begin{equation*}
\|\dot{\Delta}_{j}u\|_{L^{p}}\leq C c_{j, r} \, 2^{-j s }
\|u\|_{\dot{B}^{s}_{p, r}}\qquad \mbox{for all}\ \ j\in\ZZ.
\end{equation*}
\end{enumerate}
\end{Rema}
For the convenience of the reader,  we recall some basic facts on
Littlewood-Paley theory, one may check \cite{bhouri} for more
details.
\begin{lemma}\label{lb} Let ${\mathcal B}$ be a ball and ${\mathcal C}$ an  annulus of $\RR^N.$
 A constant $C$ exists so
that for any positive real number $\delta,$ any non-negative integer
$k,$ any smooth homogeneous function $\sigma$ of degree $m,$ and any
couple of real numbers $(a, \; b)$ with $ b \geq a \geq 1,$ there
hold
$$
\begin{aligned}
&\Supp \hat{u} \subset \delta\mathcal{B} \Rightarrow \sup_{|\alpha|=k}
\|\partial^{\alpha} u\|_{L^{b}} \leq  C^{k+1}
\delta^{k+ N(\frac{1}{a}-\frac{1}{b} )}\|u\|_{L^{a}},\\
& \Supp \hat{u} \subset \delta \mathcal{C} \Rightarrow C^{-1-k}\delta^{
k}\|u\|_{L^{a}}\leq \sup_{|\alpha|=k}\|\partial^{\alpha} u\|_{L^{a}}
\leq
C^{1+k}\delta^{ k}\|u\|_{L^{a}},\\
& \Supp \hat{u} \subset \delta\mathcal{C} \Rightarrow \|\sigma(D)
u\|_{L^{b}}\leq C_{\sigma, m} 
\delta^{ m+N(\frac{1}{a}-\frac{1}{b})}\|u\|_{L^{a}}.
\end{aligned}
$$
\end{lemma}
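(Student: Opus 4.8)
The three assertions are the classical Bernstein inequalities, and the natural strategy is to reduce everything to Young's convolution inequality after realizing frequency truncation as a convolution with a rescaled Schwartz kernel. Fix once and for all cutoffs $\phi,\Phi\in C_c^\infty(\RR^N)$ with $\phi\equiv1$ on $\mathcal B$ and $\Phi\equiv1$ on $\mathcal C$, the second supported in an annulus avoiding the origin. For the first two inequalities I would first dispose of the parameter $\delta$ by scaling: setting $u_\delta(x)=u(\delta^{-1}x)$ one has $\widehat{u_\delta}(\xi)=\delta^N\hat u(\delta\xi)$, so $\Supp\hat u\subset\delta\mathcal B$ becomes $\Supp\widehat{u_\delta}\subset\mathcal B$, while $\partial^\alpha u_\delta=\delta^{-|\alpha|}(\partial^\alpha u)(\delta^{-1}\cdot)$ and $\|u_\delta\|_{L^p}=\delta^{N/p}\|u\|_{L^p}$. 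A direct bookkeeping of these homogeneities shows that the general statement follows from the case $\delta=1$ and reproduces exactly the exponent $k+N(\tfrac1a-\tfrac1b)$ (resp.\ the power $\delta^{k}$), so it suffices to treat $\delta=1$.

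For the first inequality, with $\delta=1$ and $\Supp\hat u\subset\mathcal B$ one has $u=\phi(\textnormal{D})u=g\star u$ with $g=\mathcal F^{-1}\phi$, whence $\partial^\alpha u=(\partial^\alpha g)\star u$. Young's inequality with $\tfrac1c=1+\tfrac1b-\tfrac1a$ (which satisfies $c\geq1$ precisely because $b\geq a$) gives $\|\partial^\alpha u\|_{L^b}\le\|\partial^\alpha g\|_{L^c}\|u\|_{L^a}$, so the whole issue is the uniform bound $\|\partial^\alpha g\|_{L^c}\le C^{k+1}$ for $|\alpha|=k$. Since $\widehat{\partial^\alpha g}(\xi)=(i\xi)^\alpha\phi(\xi)$ is supported in a fixed ball $\{|\xi|\le R\}$, the bound $\|\partial^\alpha g\|_{L^\infty}\le\|(i\xi)^\alpha\phi\|_{L^1}\le R^{k}\|\phi\|_{L^1}$ is immediate; bounding $\|\partial^\alpha g\|_{L^1}$ by controlling $\|(1+|x|^2)^M\partial^\alpha g\|_{L^\infty}$ through Leibniz's rule applied to $\partial^\beta[(i\xi)^\alpha\phi]$ (with $2M>N$ fixed) yields $\|\partial^\alpha g\|_{L^1}\le C_M\,k^{2M}R^{k}$, and the interpolation $\|f\|_{L^c}\le\|f\|_{L^\infty}^{1-1/c}\|f\|_{L^1}^{1/c}$ together with $k^{2M/c}\le C^{k+1}$ produces the stated $C^{k+1}$.

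For the second inequality the upper bound is exactly the case $a=b$ of the first (an annulus lies inside a ball). The lower bound is a reverse Bernstein estimate, for which I would use the multinomial identity $|\xi|^{2k}=\sum_{|\alpha|=k}\binom{k}{\alpha}\xi^{2\alpha}$ to write, on the support of $\Phi(\delta^{-1}\cdot)$, the decomposition $\hat u=\sum_{|\alpha|=k}m_\alpha(\xi)\,(i\xi)^\alpha\hat u$ with $m_\alpha(\xi)=\binom{k}{\alpha}\,\overline{(i\xi)^\alpha}\,|\xi|^{-2k}\,\Phi(\delta^{-1}\xi)$, so that $u=\sum_{|\alpha|=k}m_\alpha(\textnormal{D})\partial^\alpha u$. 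As $\overline{(i\xi)^\alpha}|\xi|^{-2k}$ is homogeneous of degree $-k$, the convolution kernel of $m_\alpha(\textnormal{D})$ has $L^1$ norm $\delta^{-k}\|\mathcal F^{-1}\tilde m_\alpha\|_{L^1}$ with $\tilde m_\alpha$ independent of $\delta$, and Young's inequality gives $\|u\|_{L^a}\le\delta^{-k}\big(\sum_{|\alpha|=k}\|\mathcal F^{-1}\tilde m_\alpha\|_{L^1}\big)\sup_{|\alpha|=k}\|\partial^\alpha u\|_{L^a}$, the sum again being $\le C^{k+1}$ by the same band-limited bookkeeping.

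The third inequality carries no $k$ to track: since $\sigma$ is homogeneous of degree $m$ and $\Phi$ vanishes near the origin, the product $\sigma(\xi)\Phi(\delta^{-1}\xi)=\delta^m\theta(\delta^{-1}\xi)$ with $\theta=\sigma\Phi\in C_c^\infty$, so $\sigma(\textnormal{D})u=\delta^{m+N}(\mathcal F^{-1}\theta)(\delta\cdot)\star u$ and Young's inequality with the same $c$ yields $\|\sigma(\textnormal{D})u\|_{L^b}\le\delta^{m+N(\frac1a-\frac1b)}\|\mathcal F^{-1}\theta\|_{L^c}\|u\|_{L^a}$, with $C_{\sigma,m}:=\|\mathcal F^{-1}\theta\|_{L^c}<\infty$ because $\theta\in\mathcal S$. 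The only genuinely non-routine point in all three parts is extracting constants of the precise exponential form $C^{k+1}$ and $C^{-1-k}$ \emph{uniformly} in $k$, rather than constants depending on $k$ in an uncontrolled way; this is where the Paley--Wiener structure of band-limited functions must be exploited carefully through the Leibniz and multinomial estimates above, while the remaining content reduces to scaling and Young's inequality.
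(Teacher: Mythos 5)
Your argument is correct and coincides, in substance, with the standard proof of Bernstein's inequalities in Bahouri--Chemin--Danchin \cite{bhouri}, which is precisely the source this paper cites for Lemma \ref{lb} (the paper itself states the lemma without proof). Your scaling reduction, the Young-inequality step with $\frac{1}{c}=1+\frac{1}{b}-\frac{1}{a}$, the weighted-kernel bookkeeping that yields constants of the uniform form $C^{k+1}$, and the multinomial identity $|\xi|^{2k}=\sum_{|\alpha|=k}\binom{k}{\alpha}\xi^{2\alpha}$ for the reverse inequality all match the cited argument.
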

In what follows, we shall frequently use Bony's decomposition
\cite{bony} in the both homogeneous and inhomogeneous context:
$$
\begin{aligned}
uv=\dot{T}_u v+\dot{R}(u,v)=\dot{T}_u v+\dot{T}_v u+\dot{\mathcal{R}}(u,v)
\end{aligned}
$$
where
$$
\begin{aligned}
&\dot{T}_u v=\sum_{q \in\ZZ}\dot S^h_{q-1}u\dot\Delta^h_q
v,\qquad
\dot{R}(u,v)=\sum_{q\in\ZZ}\dot\Delta_q u \dot S_{q+2}v,\\
&\dot{\mathcal{R}}(u,v)=\sum_{q\in\ZZ}\dot\Delta_q u
\widetilde{\dot\Delta}_{q}v\quad  \mbox{with}\quad
\widetilde{\dot\Delta}_{q}v= \sum_{|q'-q|\leq
1}\dot\Delta_{q'}v.
\end{aligned}
$$
\begin{definition}
Let $u$ be a mean free function in $\mathcal{S}'(\RR^2\times]-\pi,\pi[),$ 
$2\pi$-periodic with respect the third variable, 
$(p,r)\in[1,+\infty]^2$ and $s\in\RR$ be given real numbers.
Then $u$ belongs to the Besov space $\mathscr{\dot B}^s_{p,r}$
if and only if
$$
\|u\|_{\mathscr{\dot B}^s_{p,r}}=\sum_{n\in\ZZ}\|u_n\|_{\dot B^s_{p,r}}<+\infty
$$
where $u_n$ is the Fourier coefficients are computed as follows
$$
u_n=\frac{1}{2\pi}\int_{-\pi}^{\pi}u(\cdot,\cdot,z)e^{-inz}dz.
$$
\end{definition}
\begin{Rema}
Let $(p,r)\in[1,+\infty]^2$ and $s\in\RR,$ then for all 
$u\in\mathcal{S}'(\RR^2\times]-\pi,\pi[),$ $2\pi$-periodic with regard to the third 
variable, we have
$$
\|u\|_{\dot B^s_{p,r}(\RR^2)}
\lesssim
\|u\|_{\mathscr{\dot B}^s_{p,r}}.
$$
\end{Rema}
\section{Geometric properties of the vorticity}
\begin{proposition}\label{wassim}
Let $u=(u_1,u_2,u_z)$  be a smooth helicoidal vector field.  Then \\
the vector $\omega=\nabla \times u=(\omega_1,\omega_2,\omega_z)$ satisfies for every $(x_1,x_2,z)\in \RR^3$,
$$
x_1\omega_1(x_1,x_2,z)+x_2\omega_2(x_1,x_2,z)=0.
$$
\end{proposition}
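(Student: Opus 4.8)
The plan is to reduce the stated identity to the vanishing of the radial component of the vorticity, and then to read off that vanishing directly from the two defining properties of a helicoidal field.

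First I would observe that in cylindrical coordinates the horizontal position vector is $(x_1,x_2,0)=r\,e_r$, so that
$$
x_1\omega_1+x_2\omega_2=(x_1,x_2,0)\cdot\omega=r\,(e_r\cdot\omega)=r\,\omega_r,
$$
where $\omega_r$ is the radial component of $\omega$ in the frame $(e_r,e_\theta,e_z)$. Since $r\ge 0$, the claim is equivalent to $\omega_r\equiv0$, and the whole proof comes down to computing this one scalar.

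Next I would write out the radial component of the curl in cylindrical coordinates,
$$
\omega_r=\frac{1}{r}\,\partial_\theta u_z-\partial_z u_\theta,
$$
and feed in the two helicoidal conditions. Condition 1) of Definition \ref{he} says precisely that $u_r,u_\theta,u_z$ depend on $(r,\theta,z)$ only through $r$ and the combination $\sigma:=z-k\theta$, these being the two quantities that stay constant along a helicoid; hence at fixed $r$ one has $\partial_\theta=-k\,\partial_\sigma$ and $\partial_z=\partial_\sigma$, which turns the formula into
$$
\omega_r=-\frac{k}{r}\,\partial_\sigma u_z-\partial_\sigma u_\theta.
$$
Condition 2), the orthogonality of $u$ to $h=r e_\theta+k e_z$, reads $r u_\theta+k u_z=0$; differentiating this relation in $\sigma$ at fixed $r$ gives $k\,\partial_\sigma u_z=-r\,\partial_\sigma u_\theta$. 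Substituting into the previous display cancels the two terms and yields $\omega_r\equiv0$, which is exactly what is needed.

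There is no genuine analytic obstacle here; the content is geometric bookkeeping. The point that must be handled with care is the translation of Condition 1) into the statement that each component is a function of $r$ and $z-k\theta$ alone, together with the correct chain-rule identities $\partial_\theta=-k\,\partial_\sigma$ and $\partial_z=\partial_\sigma$: getting the signs and the factor $k$ right is precisely what makes the two contributions to $\omega_r$ cancel. Taking $k=1$, as in the normalization adopted after Definition \ref{he}, shortens the computation but changes nothing, while the degenerate case $k=0$ is the axisymmetric one, where $u_\theta\equiv0$ and $\partial_\theta u_z\equiv0$ force $\omega_r=0$ immediately.
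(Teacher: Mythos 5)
Your proof is correct, and it is actually tighter than the paper's at exactly the point where the paper is thinnest. Both arguments work in cylindrical coordinates starting from the curl formula, but they then diverge. The paper asserts, citing only condition 2) of Definition \ref{he}, the stronger structural identity $\omega=r\omega_z e_\theta+\omega_z e_z$ (i.e.\ $\omega$ parallel to $h$), converts it to Cartesian components $\omega_1=-x_2\omega_z$, $\omega_2=x_1\omega_z$, and concludes; no derivation of that identity is given, and in fact it cannot follow from condition 2) alone --- the helicoidal symmetry of condition 1) is also needed, exactly as your computation shows. You instead note that the claim is precisely $r\omega_r=0$, so only the vanishing of the radial component is required, and you genuinely prove it: condition 1) gives the chain-rule identities $\partial_\theta=-k\,\partial_\sigma$ and $\partial_z=\partial_\sigma$ with $\sigma=z-k\theta$, condition 2) differentiated in $\sigma$ gives $k\,\partial_\sigma u_z=-r\,\partial_\sigma u_\theta$, and the two terms of $\omega_r=\frac{1}{r}\partial_\theta u_z-\partial_z u_\theta$ cancel. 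What your route buys is a self-contained argument using only the minimal fact needed (and it repairs the paper's misattribution of that fact to condition 2) alone); what the paper's route buys is the stronger conclusion $k\omega_\theta=r\omega_z$, which is not needed for this proposition but is used later (in Proposition \ref{mahdi} and Corollary \ref{C1}). If you wanted your argument to serve those later purposes, you would supplement it with the entirely analogous computation showing $k\omega_\theta-r\omega_z=0$, which follows by the same substitutions applied to $\omega_\theta=\partial_z u_r-\partial_r u_z$ and $\omega_z=\frac{u_\theta}{r}+\partial_r u_\theta-\frac{1}{r}\partial_\theta u_r$.
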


\begin{proof}
 In the cylindrical coordinate system, we have
 \begin{equation}\label{rot}
 \omega=\nabla \times u=\left(\begin{array}{c}\frac{1}{r}\partial_\theta u_z-\partial_z u_\theta \\\partial_zu_r-\partial_ru_z \\ \frac{u_\theta}{r}+\partial_ru_\theta
 -\frac{1}{r}\partial_\theta u_r\end{array}\right),
\end{equation}
 and the second point of the Definition \ref{he}, implies that
 $$
 \omega=r\omega_ze_{\theta}+\omega_ze_z.
 $$
Then
$$
\omega^1(x_1,x_2,z)=-x_2\omega_z
$$
and
$$
\omega^2(x_1,x_2,z)=x_1\omega_z.
$$
Therefore
$$
x_1\omega_1(x_1,x_2,z)+x_2\omega_2(x_1,x_2,z)=0.
$$
This achieves the proof.
\end{proof}
\begin{proposition}\label{nabla}
Let $u$  be an helicoidal vector field. Then
$$
\big|\partial_r(\frac{u_{\theta}}{r})\big|+\big|\partial_z(\frac{u_{\theta}}{r})\big|
\lesssim
|\partial^2_xu|+|\partial^2_yu|+|\partial^2_{xy}u|+|\nabla u|.
$$
\end{proposition}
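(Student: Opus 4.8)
The plan is to pass to Cartesian coordinates and reduce the two cylindrical derivatives of $u_\theta/r$ to quotients whose numerators vanish to high order on the axis $r=0$, then absorb the singular prefactors $r^{-2},r^{-3}$ by Taylor expansion. The starting point is the second part of Definition \ref{he}: orthogonality of $u$ to $h=re_\theta+e_z=(-y,x,1)$ gives $-yu_1+xu_2+u_z=0$, i.e. $u_z=u_3=yu_1-xu_2$ and hence $ru_\theta=xu_2-yu_1=-u_3$. Therefore
\[
\frac{u_\theta}{r}=-\frac{u_3}{r^2},\qquad u_3=yu_1-xu_2 .
\]
I will also use the first part of Definition \ref{he}, which says that each cylindrical component is constant along $h$, i.e. $(\partial_\theta+\partial_z)u_z=0$; since $u_z=u_3$ this reads $\partial_z u_3=-\partial_\theta u_3$ with $\partial_\theta=-y\partial_x+x\partial_y$. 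Throughout I use that the field vanishes on the symmetry axis, $u_1(0,0,z)=u_2(0,0,z)=0$.

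For the vertical derivative I write $\partial_z(u_\theta/r)=-\partial_z u_3/r^2=\partial_\theta u_3/r^2$ and expand $\partial_\theta u_3$ with $u_3=yu_1-xu_2$; a direct computation gives
\[
\partial_\theta u_3=(xu_1+yu_2)-y^2\partial_x u_1-x^2\partial_y u_2+xy(\partial_x u_2+\partial_y u_1),
\]
so $\partial_z(u_\theta/r)=\frac{xu_1+yu_2}{r^2}+\frac1{r^2}\bigl(-y^2\partial_x u_1-x^2\partial_y u_2+xy(\partial_x u_2+\partial_y u_1)\bigr)$. In the second group each coefficient $y^2/r^2,\ x^2/r^2,\ xy/r^2$ is $\le1$, so it is $\lesssim|\nabla u|$. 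The remaining term is exactly $u_r/r$ and is a priori singular; here the axis vanishing enters. Writing $u_i(x,y,z)=\int_0^1\nabla_h u_i(tx,ty,z)\cdot(x,y)\,dt$ and substituting, $xu_1+yu_2$ is a sum of $x^2,\,xy,\,y^2$ times integrals of horizontal first derivatives, so after division by $r^2$ the coefficients stay $\le1$ and $|xu_1+yu_2|/r^2\lesssim|\nabla u|$ (along the segment to the axis). This closes the bound on $|\partial_z(u_\theta/r)|$; note the detour through $\partial_\theta$ is essential, as differentiating $\partial_z u_3$ directly would produce mixed derivatives $\partial_{xz}u,\partial_{yz}u$, which are not in the right-hand side.

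For the radial derivative, with $\partial_r=\frac{x}{r}\partial_x+\frac{y}{r}\partial_y$, a direct Cartesian computation gives $\partial_r(u_\theta/r)=-\partial_r(u_3/r^2)=-N/r^3$, where $N:=x\partial_x u_3+y\partial_y u_3-2u_3=(\mathcal{E}-2)u_3$ and $\mathcal{E}:=x\partial_x+y\partial_y$. Substituting $u_3=yu_1-xu_2$ and regrouping yields the clean identity $N=y(\mathcal{E}-1)u_1-x(\mathcal{E}-1)u_2$: the relation $u_3=yu_1-xu_2$ has supplied explicit factors $x,y$ that will let me trade one order of differentiation. Put $g_i:=(\mathcal{E}-1)u_i=x\partial_x u_i+y\partial_y u_i-u_i$; since $u_i(0,0,z)=0$ one checks $g_i(0,0,z)=0$ and $\nabla_h g_i(0,0,z)=0$. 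Writing $u_i=L_i+\rho_i$ with $L_i$ its horizontal linear part at the axis, $\mathcal{E}L_i=L_i$ gives $g_i=(\mathcal{E}-1)\rho_i=\mathcal{E}\rho_i-\rho_i$; the integral-remainder bounds $|\rho_i|\lesssim r^2\sup(|\partial_x^2u|+|\partial_{xy}^2u|+|\partial_y^2u|)$ and $|\nabla_h\rho_i|\lesssim r\sup(|\partial_x^2u|+|\partial_{xy}^2u|+|\partial_y^2u|)$, together with $|\mathcal{E}\rho_i|\le r|\nabla_h\rho_i|$, give $|g_i|\lesssim r^2\sup(|\partial_x^2u|+|\partial_{xy}^2u|+|\partial_y^2u|)$. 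Hence $|N|\le r(|g_1|+|g_2|)\lesssim r^3\sup(|\partial_x^2u|+|\partial_{xy}^2u|+|\partial_y^2u|)$, so $|\partial_r(u_\theta/r)|=|N|/r^3\lesssim|\partial_x^2u|+|\partial_{xy}^2u|+|\partial_y^2u|$. Adding the two estimates gives the proposition.

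The main obstacle, where all the work sits, is the control near the axis of the a priori singular quotients $u_r/r=(xu_1+yu_2)/r^2$ and $N/r^3$: naive differentiation produces $1/r$ blow-up. Two structural facts defuse it — the vanishing of $u$ on the symmetry axis, which raises the order of vanishing of the numerators, and the identity $u_3=yu_1-xu_2$, whose explicit weights $x,y$ let the third-order behaviour of $u_3$ be paid for by the horizontal Hessian of $u_1,u_2$. I would present all Taylor estimates with integral remainders, so that the inequalities are understood in $L^\infty$, the numerators being controlled by derivatives of $u$ along the radial segment joining the point to the axis.
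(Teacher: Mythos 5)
Your argument hinges on the assertion ``the field vanishes on the symmetry axis, $u_1(0,0,z)=u_2(0,0,z)=0$'', which you invoke twice and in an essential way: it is what lets you write $u_i=\int_0^1\nabla_h u_i(tx,ty,z)\cdot(x,y)\,dt$ to control $u_r/r=(xu_1+yu_2)/r^2$ in the vertical part, and what kills the linear part $L_i$ so that $g_i=(\mathcal{E}-1)u_i$ vanishes to second order in the radial part. This vanishing is not part of Definition \ref{he} and does not follow from it. Concretely, the smooth field
\[
u=\bigl(\cos z,\ \sin z,\ y\cos z-x\sin z\bigr)
\]
has cylindrical components $u_r=\cos(z-\theta)$, $u_\theta=\sin(z-\theta)$, $u_z=-r\sin(z-\theta)$, all functions of $(r,z-\theta)$ (condition 1), and satisfies $ru_\theta+u_z=0$ (condition 2 with $k=1$), yet $u_1(0,0,z)=\cos z\neq0$. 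For this field $u_\theta/r=\sin(z-\theta)/r$, so $|\partial_r(u_\theta/r)|=|\sin(z-\theta)|/r^{2}$ blows up at the axis while the right-hand side of the proposition stays bounded there, since $u$ is affine in $(x,y)$. So the gap is not a patchable technicality: without the axis-vanishing hypothesis the inequality itself is false, and what you have proved is the proposition only for the restricted class of helicoidal fields whose horizontal components vanish on the axis. (One can even make such a counterexample divergence free, by adding $\tfrac13\bigl(x^2\cos z+xy\sin z,\ xy\cos z+y^2\sin z,\ 0\bigr)$, so incompressibility does not rescue the statement either.)

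For the record, your route is genuinely different from the paper's: the paper solves the cylindrical curl formula for $u_\theta/r$, namely $u_\theta/r=\omega_z-\partial_ru_\theta+\frac1r\partial_\theta u_r$, then differentiates using $\partial_r=\cos\theta\,\partial_x+\sin\theta\,\partial_y$ and $\frac1r\partial_\theta=-\sin\theta\,\partial_x+\cos\theta\,\partial_y$, and for the vertical derivative uses the screw invariance $\partial_zu_\theta=-\partial_\theta u_\theta$. But that proof only appears to avoid your difficulty: since $u_r=\cos\theta\,u_1+\sin\theta\,u_2$, the operator $\frac1r\partial_\theta$ also differentiates the angular factors and regenerates precisely the singular quotients ($\frac1r\partial_\theta u_r$ contains $u_\theta/r$, and $\frac1r\partial_\theta u_\theta$ contains $-u_r/r$), so the paper's estimate is circular, or at best implicit, exactly where you were forced to introduce the axis condition; in that sense your computation localizes the true obstruction more honestly than the published proof, but it does not close it. One further point to state explicitly if you keep this proof for the restricted class: your Taylor arguments bound the quotients by suprema of $|\nabla u|$ and of the horizontal Hessian along the segment joining $(x,y,z)$ to the axis, not by their values at the point, so you obtain an $L^\infty$-type inequality rather than the pointwise one literally asserted; that weaker form is all that is used in the paper (Proposition \ref{mahdi}), but the discrepancy should be acknowledged.
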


\begin{proof}
According to the inequality \eqref{rot}, we have
$$
\frac{u_\theta}{r}=\omega_z-\partial_ru_\theta
 +\frac{1}{r}\partial_\theta u_r
 $$
 where $\omega_z$ is the vertical component of  $\rot\,u.$ One has
 $$
 \partial_r=\cos(\theta)\partial_x+\sin(\theta)\partial_y
 $$
 and
 $$
 \frac{1}{r}\partial_\theta=-\sin(\theta)\partial_x+\cos(\theta)\partial_y
 $$
 it follows that 
 $$
 \partial_r(\frac{1}{r}\partial_\theta)=-\sin(\theta)\cos(\theta)\partial^2_x
 +\sin(\theta)\cos(\theta)\partial^2_y+\bigl(\cos^2(\theta)-\sin^2(\theta)\bigr)\partial^2_{xy}
 $$
 and
 $$
  \partial^2_r=\cos^2(\theta)\partial^2_x+\cos^2(\theta)\partial^2_y
  +2\cos(\theta)\cos(\theta)\partial^2_{xy}.
 $$
 Thus we find
 $$
\big|\partial_r(\frac{u_{\theta}}{r})\big|
\lesssim
|\partial^2_xu|+|\partial^2_yu|+|\partial^2_{xy}u|.
$$
Since $u$ is helicoidal, then
$$
\partial_z(\frac{u_{\theta}}{r})=-\frac{1}{r}\partial_\theta u_{\theta}
$$
thus
$$
\big|\partial_z(\frac{u_{\theta}}{r})\big|
\lesssim
|\nabla u|.
$$
This finishes the proof of Proposition.
\end{proof}

$\bullet$ The last part of this section is dedicated to the study of a vorticity equation type in
which no relations between the vector field u and the solution $\Omega$ are supposed. More
precisely, we consider
\begin{equation}\label{bassem}
\left\{ \begin{array}{l}
\displaystyle\partial_t \Omega + u.\nabla \Omega= \Omega .\nabla u,\\
\displaystyle\dv\, u=0\\
\displaystyle\Omega _{|t=0}=\Omega^0.
\end{array} \right.
\end{equation}
\begin{proposition}\label{mahdi}
Let $u$ be a divergence free and helicoidal vector field  such that $\nabla u$ and $\nabla^2u$ belonging to
$ L^1_{loc}(\RR_+;\,L^\infty(\RR^3))$ and $\Omega$ the unique global solution of \eqref{bassem}  with smooth initial data $\Omega^0$. Then, the following properties hold. \\
i) If $\dv\,\Omega^0=0,$ then $\dv\,\Omega(t)=0,$ for every $t\in \RR_+$.\\
ii) If $\Omega^0=r\Omega^0_z e_{\theta}+\Omega^0_z e_z,$ then we have
$$
\Omega(t)=r\Omega_z(t) e_{\theta}+\Omega_z(t) e_z,\quad \forall t\in \RR_+.
$$
Consequently, $\Omega(t,x_1,0,z)=\Omega(t,0,x_2,z)=0$ and
$$
\partial_t \Omega+(u\cdot\nabla)\Omega=\Omega_z(u_re_{\theta}-u_{\theta}e_r).
$$
\end{proposition}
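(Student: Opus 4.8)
The plan is to treat the two assertions separately, in each case deriving a transport-type equation for the relevant scalar quantity and closing the argument by uniqueness for the associated linear problem; this is licit because $\nabla u\in L^1_{loc}(\RR_+;L^\infty)$ provides a well-defined flow $\psi$. For i), I would take the divergence of the first equation in \eqref{bassem}. Writing the material derivative $D_t=\partial_t+u\cdot\nabla$, a direct computation gives
$$
\dv(u\cdot\nabla\Omega)=u\cdot\nabla(\dv\Omega)+\sum_{i,j}(\partial_i u_j)(\partial_j\Omega_i),\qquad \dv(\Omega\cdot\nabla u)=\sum_{i,j}(\partial_i\Omega_j)(\partial_j u_i),
$$
where the term $\Omega\cdot\nabla(\dv u)$ has been dropped using $\dv u=0$. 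Swapping the summation indices $i\leftrightarrow j$ shows the two double sums coincide, so they cancel and $\dv\Omega$ solves the homogeneous transport equation $\partial_t(\dv\Omega)+u\cdot\nabla(\dv\Omega)=0$. Since $\dv\Omega^0=0$, uniqueness forces $\dv\Omega(t)\equiv0$.

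For ii) the key remark is that the prescribed form $\Omega=r\Omega_z e_\theta+\Omega_z e_z$ is exactly the statement that $\Omega$ is pointwise parallel to $h:=r e_\theta+e_z$, whose Cartesian expression is $h=(-x_2,x_1,1)$. Equivalently, introducing
$$
a:=\Omega_1+x_2\Omega_z,\qquad b:=\Omega_2-x_1\Omega_z,
$$
the claim is that $a$ and $b$ stay zero once they vanish at $t=0$. I would therefore compute $D_t a$ and $D_t b$ from \eqref{bassem}, using $D_t x_1=u_1$ and $D_t x_2=u_2$, and substitute $\Omega_1=a-x_2\Omega_z$, $\Omega_2=b+x_1\Omega_z$ to write the right-hand sides as combinations of $a$, $b$ and $\Omega_z$. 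The coefficients of $a$ and $b$ are harmless functions of $\nabla u$; the whole point is that the coefficient of $\Omega_z$ must vanish. A short rearrangement identifies these two coefficients with $[h,u]_1+x_2[h,u]_3$ and $[h,u]_2-x_1[h,u]_3$, where $[h,u]=h\cdot\nabla u-u\cdot\nabla h$.

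The main obstacle is thus to show $[h,u]=0$, and I expect this to be the heart of the proof. This is precisely the analytic content of the helical invariance of $u$: condition 1 of Definition \ref{he} says that $u_r,u_\theta,u_z$ are constant along the helices generated by $h$, i.e. $u$ is invariant under the flow of $h$, which is the identity $\mathcal{L}_h u=[h,u]=0$; its third component is just $(\partial_\theta+\partial_z)u_z=-x_2\partial_1 u_z+x_1\partial_2 u_z+\partial_3 u_z=0$. With $[h,u]=0$ the $\Omega_z$-terms drop out and $(a,b)$ obeys the closed homogeneous system
$$
D_t a=(\partial_1 u_1+x_2\partial_1 u_3)a+(\partial_2 u_1+x_2\partial_2 u_3)b,\qquad D_t b=(\partial_1 u_2-x_1\partial_1 u_3)a+(\partial_2 u_2-x_1\partial_2 u_3)b.
$$
Read along a trajectory $t\mapsto\psi(t,x)$, the coefficients (bounded by $(1+|\psi|)|\nabla u|$) are time-integrable on each $[0,T]$ since the trajectory stays bounded and $\nabla u\in L^1_{loc}(L^\infty)$; Gronwall's lemma together with $a(0)=b(0)=0$ then gives $a\equiv b\equiv0$, i.e. $\Omega(t)=r\Omega_z(t)e_\theta+\Omega_z(t)e_z$ for all $t$.

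Finally, the consequences are immediate substitutions. Evaluating $\Omega_1=-x_2\Omega_z$ at $x_2=0$ and $\Omega_2=x_1\Omega_z$ at $x_1=0$ yields the announced vanishing on the coordinate planes. For the last identity I would insert $\Omega=\Omega_z(r e_\theta+e_z)$ into $\Omega\cdot\nabla u$ and use $(r e_\theta+e_z)\cdot\nabla=\partial_\theta+\partial_z$; since $u_r,u_\theta,u_z$ are helically invariant, $(\partial_\theta+\partial_z)$ annihilates them, so only the frame derivatives $\partial_\theta e_r=e_\theta$, $\partial_\theta e_\theta=-e_r$ survive, giving $\Omega\cdot\nabla u=\Omega_z(u_r e_\theta-u_\theta e_r)$. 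Replacing the right-hand side of \eqref{bassem} by this expression produces $\partial_t\Omega+(u\cdot\nabla)\Omega=\Omega_z(u_r e_\theta-u_\theta e_r)$.
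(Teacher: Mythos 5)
Your proof is correct, and part i) (divergence of the equation, cancellation of the two double sums after relabelling indices, transport of $\dv\,\Omega$) coincides with the paper's argument. For part ii), however, you take a genuinely different route. The paper works in cylindrical coordinates with the pair $\bigl(\Omega_r,\;\Omega_z-\frac{\Omega_\theta}{r}\bigr)$, derives for it a closed linear system whose coefficients involve $\partial_r\bigl(u_z-\frac{u_\theta}{r}\bigr)$ and $\partial_z\bigl(u_z-\frac{u_\theta}{r}\bigr)$, invokes Proposition \ref{nabla} to bound these singular quantities by $|\nabla u|+|\nabla^2 u|$ (this is precisely why the hypothesis $\nabla^2u\in L^1_{loc}(\RR_+;L^\infty)$ appears), and closes with the $L^\infty$ maximum principle and Gronwall. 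You instead work with the Cartesian deviations $a=\Omega_1+x_2\Omega_z$, $b=\Omega_2-x_1\Omega_z$, which are (up to a rotation of the frame) the pair $\bigl(\Omega_r,\;\Omega_\theta-r\Omega_z\bigr)$, i.e.\ the paper's second quantity multiplied by $-r$; this multiplication by $r$ removes the singular weight, so no second derivatives of $u$ are ever needed and Proposition \ref{nabla} becomes superfluous. The price is that your coefficients grow linearly in $x$ (terms like $x_2\partial_1u_3$), so the global-in-space maximum principle is unavailable and you must run Gronwall along each trajectory of the flow, using that trajectories stay bounded on finite time intervals; this is legitimate since the flow is a bijection. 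Your identification of the helical symmetry with the Lie-bracket identity $[h,u]=0$, $h=re_\theta+e_z=(-x_2,x_1,1)$, is also a cleaner conceptual formulation than the paper's componentwise computations: it is exactly what makes the $\Omega_z$-source terms $[h,u]_1+x_2[h,u]_3$ and $[h,u]_2-x_1[h,u]_3$ vanish, and it reappears verbatim in the final computation $\Omega\cdot\nabla u=\Omega_z(\partial_\theta+\partial_z)u=\Omega_z(u_re_\theta-u_\theta e_r)$, which both you and the paper perform identically. Net effect: your argument proves part ii) under the weaker hypothesis $\nabla u\in L^1_{loc}(\RR_+;L^\infty)$ alone, while the paper's argument keeps all estimates in uniform norms with spatially bounded coefficients.
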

\begin{proof}
First, we notice that the existence and uniqueness of global solution can be
done in classical way. Indeed, let $\psi$ the flow of the velocity $u$,
$$
\psi(t,x)=x+\int_0^t u(\tau,\psi(\tau,x))d\tau.
$$
Since $u\in L^1_{loc}(\RR_+;\, Lip(\RR^3))$  then it follows from the ODE theory that the function $\psi$ is uniquely and globally defined.\\
Let $\tilde \Omega(t,x):= \Omega(t,\psi(t,x))$ and $A(t,x)$ the matrix such that 
$A(t,\psi^{-1}(t,x))={(\partial_j u_i)}_{1\leq i,j\leq 3}$.\\
It's clear that 
$$
\partial_t \tilde \Omega=A(t,x)\tilde \Omega.
$$
From Cauchy$- $Lipschitz theorem this last equation has a unique global solution, and
the system \eqref{bassem} too.\\
i) We apply the divergence operator to the equation \eqref{bassem} , leading under the assumption $\dv\, u=0,$ to
$$
\partial_t \dv\, \Omega +u\cdot\nabla \dv\, \Omega=0.
$$
Then, the quantity $\dv\, \Omega$ is transported by the flow and consequently the incompressibility of $\Omega$ remains true for every time.\\
ii) We have
$$
(u\cdot\nabla \Omega)\cdot e_r=u.\nabla \Omega_r-\frac{1}{r}u_{\theta}\Omega_{\theta}
$$
and
$$
(\Omega\cdot\nabla u)\cdot e_r=\Omega\cdot\nabla u_r-\frac{1}{r}\Omega_{\theta}u_{\theta}
$$
then the component $\Omega_r,$ verifies
\begin{eqnarray}
\partial_t  \Omega_r+u\cdot\nabla \Omega_r= \Omega\cdot\nabla u_r
=\Omega_r\partial_r u_r+(\Omega_z-\frac{\Omega_{\theta}}{r})\partial_z u_r.
\end{eqnarray}
From the maximum principle we deduce
$$
||\Omega_r(t)||_{L^{\infty}}
\leq 
\int_0^t\bigl(||\Omega_r(\tau)||_{L^{\infty}}
+\big\|(\Omega_z-\frac{\Omega_{\theta}}{r})(\tau)\big\|_{L^{\infty}}\bigr)
\|\nabla u(\tau)\|_{L^{\infty}}d\tau.
$$
The component $\Omega_{\theta}$ satisfies the following equation 
$$
\partial_t \Omega_{\theta}+u\cdot\nabla \Omega_{\theta}=\Omega_r\partial_r u_{\theta}+(\Omega_z-\frac{\Omega_{\theta}}{r})\partial_z u_{\theta}+\frac{1}{r}\Omega_{\theta}u_r-\frac{1}{r}\Omega_ru_{\theta},
$$
therefore
$$
\partial_t \frac{\Omega_{\theta}}{r}
+u\cdot\nabla (\frac{\Omega_{\theta}}{r})=\Omega_r\partial_r (\frac{u_{\theta}}{r} )+(\Omega_z-\frac{\Omega_{\theta}}{r})
\partial_z( \frac{u_{\theta}}{r} ).
$$
Since the component $\Omega_z$ satisfies the following equation
$$
\partial_t \Omega_z+u\cdot\nabla \Omega_z=\Omega_r\partial_ru_z
+(\Omega_z-\frac{\Omega_{\theta}}{r})\partial_z u_z,
$$
then
$$
\partial_t(\Omega_z-\frac{\Omega_{\theta}}{r})
+u\cdot\nabla (\Omega_z-\frac{\Omega_{\theta}}{r})
=\Omega_r\partial_r(u_z-\frac{u_{\theta}}{r})
+(\Omega_z-\frac{\Omega_{\theta}}{r})\partial_z(u_z-\frac{u_{\theta}}{r}).
$$
Thus from the maximum principle and Proposition \ref{nabla}
$$
\begin{aligned}
\big\|(\Omega_z-\frac{\Omega_{\theta}}{r}&)(t)\big\|_{L^{\infty}}
\\&
\lesssim
\int_0^t 
\bigl(\|\Omega_r(\tau)\|_{L^{\infty}}
+\big\|(\Omega_z-\frac{\Omega_{\theta}}{r})(\tau)\big\|_{L^{\infty}}\bigr)
\bigl(||\nabla u||_{L^{\infty}}+||\nabla^2 u||_{L^{\infty}}\bigr)d\tau.
\end{aligned}
$$
Then
$$
\begin{aligned}
\|\Omega_r(t)\|_{L^{\infty}}
&+\big\|(\Omega_z-\frac{\Omega_{\theta}}{r})(t)\big\|_{L^{\infty}}
\\&
\lesssim
\int_0^t 
\bigl(\|\Omega_r(\tau)\|_{L^{\infty}}
+\big\|(\Omega_z-\frac{\Omega_{\theta}}{r})(\tau)\big\|_{L^{\infty}}\bigr)
\bigl(||\nabla u||_{L^{\infty}}+||\nabla^2 u||_{L^{\infty}}\bigr)d\tau.
\end{aligned}
$$
Applying Gronwall inequality gives 
$$
\Omega_r(t)=0\qquad\mbox{and}\qquad
r\Omega_z(t)=\Omega_{\theta}(t),\quad\forall\,t\in\ \RR_+.
$$
Combining the previous estimate with the fact that $u$ is helicoidal, we obtain
\begin{eqnarray}
\Omega\cdot\nabla u&=&\Omega_r\partial_r u+\frac{1}{r} \Omega_{\theta}\partial_{\theta} u+\Omega_z\partial_z u\nonumber\\
&=&\Omega_z
(\partial_{\theta}+\partial_z)(u_re_r+u_\theta e_\theta+u_ze_z)\nonumber\\
&=&\Omega_z(u_re_{\theta}-u_{\theta}e_r).\nonumber
\end{eqnarray}
Which ends the proof of this Proposition.
\end{proof}
An immediate corollary of the above Proposition gives
\begin{corollary}\label{C1}
Let $u$ be an  helicoidal divergence free vector field  solution of the Euler equations, then $\omega=\rot\,u$
verifies 
$$
\left\{\begin{array}{l}
\displaystyle\partial_t \omega_1 +(u_1+yu_3)\partial_x\omega_1 
+(u_2-xu_3)\partial_y\omega_1 =\omega_2u_3-\omega_zu_2,\\
\displaystyle\partial_t \omega_2 +(u_1+yu_3)\partial_x\omega_2 
+(u_2-xu_3)\partial_y\omega_2 =\omega_zu_1-\omega_1u_3,\\
\displaystyle\partial_t \omega_z +(u_1+yu_3)\partial_x\omega_z 
+(u_2-xu_3)\partial_y\omega_z =0,
\end{array}
\right.
$$
with
$$
\partial_x\bigl(u_1+yu_3\bigr)+\partial_y\bigl(u_2-xu_3\bigr)=0.
$$
\end{corollary}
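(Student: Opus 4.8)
The plan is to read the Corollary off Proposition \ref{mahdi} applied to $\Omega=\omega$. First I would note that the vorticity $\omega=\rot\,u$ of the Euler flow satisfies precisely the model equation \eqref{bassem} (this is \eqref{TR}), that $u$ is divergence free and helicoidal, and that for a smooth Euler solution $\nabla u,\nabla^2u\in L^1_{loc}(\RR_+;L^\infty)$. Moreover Proposition \ref{wassim} together with \eqref{rot} shows that the helicoidal structure forces $\omega^0=r\omega^0_ze_\theta+\omega^0_ze_z$, so the hypothesis of part ii) of Proposition \ref{mahdi} is met. Invoking it yields at once the Cartesian identities $\omega_1=-y\omega_z$, $\omega_2=x\omega_z$ (obtained from $\omega=r\omega_ze_\theta+\omega_ze_z$ by writing $e_r,e_\theta$ in Cartesian coordinates) and the compact evolution
$$
\partial_t\omega+(u\cdot\nabla)\omega=\omega_z\big(u_re_\theta-u_\theta e_r\big).
$$

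The second ingredient I would isolate is the helicoidal invariance. By part 1) of Definition \ref{he} (with $k=1$) every helicoidal scalar $f$ is constant along $h=re_\theta+e_z$, which reads $(\partial_\theta+\partial_z)f=0$; since $\partial_\theta=x\partial_y-y\partial_x$ this means $\partial_zf=(y\partial_x-x\partial_y)f$. Applied to a helicoidal scalar this absorbs the vertical advection into horizontal derivatives, $u_z\partial_zf=yu_z\partial_xf-xu_z\partial_yf$, so that $(u\cdot\nabla)f=(u_1+yu_3)\partial_xf+(u_2-xu_3)\partial_yf$. Because $u_z=u_3$ is itself helicoidal, the announced divergence relation follows immediately,
$$
\partial_x(u_1+yu_3)+\partial_y(u_2-xu_3)=\dv\,u-(\partial_\theta+\partial_z)u_3=0.
$$

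Next I would extract the three components. The source $\omega_z(u_re_\theta-u_\theta e_r)$ has no $e_z$-part, so the vertical line is $\partial_t\omega_z+(u\cdot\nabla)\omega_z=0$; since $\omega_z$ is helicoidal, the invariance rewrites the advection as $(u_1+yu_3)\partial_x\omega_z+(u_2-xu_3)\partial_y\omega_z$, which is exactly the third equation. For the horizontal lines I would expand $e_r,e_\theta$ in Cartesian coordinates and use $u_1=(u_rx-u_\theta y)/r$, $u_2=(u_ry+u_\theta x)/r$; a short computation shows that the $e_1$- and $e_2$-components of the source are precisely $-\omega_zu_2$ and $\omega_zu_1$, giving $\partial_t\omega_1+(u\cdot\nabla)\omega_1=-\omega_zu_2$ and $\partial_t\omega_2+(u\cdot\nabla)\omega_2=\omega_zu_1$.

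It remains to convert $(u\cdot\nabla)$ into the horizontal operator $\widetilde u\cdot\nabla_h=(u_1+yu_3)\partial_x+(u_2-xu_3)\partial_y$ acting on $\omega_1=-y\omega_z$ and $\omega_2=x\omega_z$, and this bookkeeping is the step I expect to be the main obstacle, since these are \emph{not} helicoidal scalars and a correction appears. Differentiating the products with $(u\cdot\nabla)y=u_2$, $(u\cdot\nabla)x=u_1$, $(\widetilde u\cdot\nabla_h)y=u_2-xu_3$, $(\widetilde u\cdot\nabla_h)x=u_1+yu_3$ and using the $\omega_z$-equation, one gets $(u\cdot\nabla)\omega_1=(\widetilde u\cdot\nabla_h)\omega_1-xu_3\omega_z$ and $(u\cdot\nabla)\omega_2=(\widetilde u\cdot\nabla_h)\omega_2-yu_3\omega_z$. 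Substituting $xu_3\omega_z=u_3\omega_2$ and $yu_3\omega_z=-u_3\omega_1$ then turns the two Cartesian identities into $\partial_t\omega_1+(\widetilde u\cdot\nabla_h)\omega_1=\omega_2u_3-\omega_zu_2$ and $\partial_t\omega_2+(\widetilde u\cdot\nabla_h)\omega_2=\omega_zu_1-\omega_1u_3$, which are the first two lines of the Corollary. The only real care needed is keeping the signs consistent between the Cartesian source terms and the helicoidal corrections; no idea beyond the invariance $(\partial_\theta+\partial_z)=0$ enters.
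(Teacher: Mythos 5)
Your proposal is correct and follows essentially the same route as the paper: both start from Proposition \ref{mahdi} to obtain $\omega_1=-y\omega_z$, $\omega_2=x\omega_z$ and the transport equation for $\omega_z$, both absorb the vertical advection into the horizontal operator $\widetilde u\cdot\nabla_h$, and both read off the source terms $\omega_2u_3-\omega_zu_2$, $\omega_zu_1-\omega_1u_3$ from the product rule, with the divergence identity proved by the identical computation. The only cosmetic difference is that where the paper invokes $\dv\,\omega=0$ to write $\partial_z\omega_z=y\partial_x\omega_z-x\partial_y\omega_z$, you invoke the helicoidal invariance $(\partial_\theta+\partial_z)\omega_z=0$; for a field of the form $(-y\omega_z,x\omega_z,\omega_z)$ these are literally the same identity, since $\dv\,\omega=(\partial_\theta+\partial_z)\omega_z$.
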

\begin{proof}
By the above Proposition, we have
$$
\omega_1=-y\omega_z\qquad\mbox{and}\qquad\omega_2=x\omega_z
$$
with $\omega_z$ verify 
$$
\partial_t\omega_z+(u_1\partial_x+u_2\partial_y)\omega_z+u_3\partial_z\omega_z=0.
$$
While since
$$
{\mathop{\rm div}}\,\omega =\partial_x\omega_1+\partial_y\omega_2+\partial_z\omega_z=0,
$$
we have
$$
\partial_z\omega_z=-\partial_x\omega_1-\partial_y\omega_2
=y\partial_x\omega_z-x\partial_y\omega_z,
$$
thus
$$
\partial_t \omega_3 +(u_1+yu_3)\partial_x\omega_z +(u_2-xu_3)\partial_y\omega_z =0.
$$
As $\omega_1=-y\omega_z$ and $\omega_2=x\omega_z,$ then
$$
\partial_t \omega_1 +(u_1+yu_3)\partial_x\omega_1 +(u_2-xu_3)\partial_y\omega_1 
=\omega_z(xu_3-u_2)=\omega_2u_3-\omega_zu_2
$$
and
$$
\partial_t \omega_2 +(u_1+yu_3)\partial_x\omega_2 +(u_2-xu_3)\partial_y\omega_2 
=\omega_z(u_1+yu_3)=\omega_zu_1-\omega_1u_3.
$$
Concerning the second point, we have
$$
\begin{aligned}
\partial_x\bigl(u_1+yu_3\bigr)+\partial_y\bigl(u_2-xu_3\bigr)
=\partial_xu_1+\partial_yu_2+y\partial_xu_3-x\partial_yu_3
&=\partial_xu_1+\partial_yu_2-\partial_\theta u_3
\\&
=\partial_xu_1+\partial_yu_2+\partial_zu_3
\\&
=0,
\end{aligned}
$$
and we are done.
\end{proof}
To prove our theorem, we need the following proposition which describes the distribution of the  Fourier coefficients to  transport equation.
\begin{proposition}\label{Fourier}
Under the assumptions in Corollary \ref{C1}. If $\partial_z\omega^0=0$ with 
$\omega^0$ is the initial data, then
$$
\partial_z\omega=0.
$$
\end{proposition}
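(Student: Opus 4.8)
The plan is to reduce everything to the vertical component $\omega_z$ and then differentiate its transport equation in $z$. By Proposition~\ref{mahdi} the solution keeps the structure $\omega_1=-y\omega_z$ and $\omega_2=x\omega_z$ for every time, so that
$$
\partial_z\omega=\bigl(-y\,\partial_z\omega_z,\;x\,\partial_z\omega_z,\;\partial_z\omega_z\bigr).
$$
In particular $\partial_z\omega=0$ if and only if $\partial_z\omega_z=0$, and it suffices to prove that $g:=\partial_z\omega_z$ vanishes identically whenever $g_{|t=0}=\partial_z\omega^0_z=0$.

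Next I would differentiate the scalar transport equation $\partial_t\omega_z+(\widetilde u\cdot\nabla_h)\omega_z=0$ (the third line of Corollary~\ref{C1}) with respect to $z$, which gives the closed equation
$$
\partial_t g+(\widetilde u\cdot\nabla_h)g=-(\partial_z\widetilde u\cdot\nabla_h)\omega_z .
$$
The crucial observation is that $\partial_z$ commutes with the Biot--Savart law $u=\nabla\times(-\Delta)^{-1}\omega$, so $\partial_z u=\nabla\times(-\Delta)^{-1}\partial_z\omega$; combined with $\partial_z\widetilde u=(\partial_z u_1+y\,\partial_z u_3,\,\partial_z u_2-x\,\partial_z u_3)$ and the expression for $\partial_z\omega$ above, the right-hand side becomes a \emph{linear} (nonlocal) function of $g$ alone, with coefficients built from the known solution $(u,\omega)$. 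Hence $g$ solves a linear transport equation with vanishing initial datum, for which $g\equiv 0$ is a solution.

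To conclude that it is the only one I would run an energy estimate. Since $\widetilde u$ is horizontally divergence free (the last line of Corollary~\ref{C1}), the transport term drops out of the evolution of $\|g(t)\|_{L^2}^2$, leaving
$$
\frac12\frac{d}{dt}\|g\|_{L^2}^2=-\int g\,(\partial_z\widetilde u\cdot\nabla_h)\omega_z
\le \|g\|_{L^2}\,\|\partial_z\widetilde u\|_{L^2}\,\|\nabla_h\omega_z\|_{L^\infty}.
$$
Because $g$ has zero vertical mean, the Fourier support in $z$ of $\partial_z\omega$ lies in $|n|\ge 1$, where the order $-1$ operator $\nabla\times(-\Delta)^{-1}$ is $L^2$-bounded; together with the weighted control coming from the $(1,x,y)$-norms this yields $\|\partial_z\widetilde u\|_{L^2}\lesssim\|g\|_{L^2}$. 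Gronwall's lemma then forces $g(t)\equiv 0$, and therefore $\partial_z\omega=0$.

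The main obstacle is precisely the estimate $\|\partial_z\widetilde u\|_{L^2}\lesssim\|g\|_{L^2}$: it is here that one must handle simultaneously the unbounded weights $x,y$ that enter $\widetilde u$ through the $u_3$ terms and the nonlocal Biot--Savart operator. This is exactly what the weighted hypothesis $(1,x,y)u^0\in L^2$ and the zero-vertical-mean property of $\partial_z$-quantities are designed to absorb, so that the smoothing operator becomes bounded rather than merely regularizing; the remaining steps are routine.
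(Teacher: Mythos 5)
Your opening steps agree with the paper's: Proposition \ref{mahdi} gives $\omega=(-y\omega_z,x\omega_z,\omega_z)$, so everything reduces to $g:=\partial_z\omega_z$, and differentiating the transport equation of Corollary \ref{C1} in $z$ gives $\partial_t g+(\tilde u\cdot\nabla_h)g=-(\partial_z\tilde u\cdot\nabla_h)\omega_z$. The gap is the inequality $\|\partial_z\tilde u\|_{L^2}\lesssim\|g\|_{L^2}$, which you yourself single out as the crux but never prove, and which is false as stated. Biot--Savart on the nonzero vertical modes does give $\|\partial_z u\|_{L^2}\lesssim\|\partial_z\omega\|_{L^2}$, but $\partial_z\omega=(-yg,xg,g)$, so this right-hand side is the weighted quantity $\|(1,x,y)g\|_{L^2}$, which is not controlled by $\|g\|_{L^2}$ on $\RR^2$; moreover $\partial_z\tilde u$ carries the further factors $y\partial_z u_3$ and $x\partial_z u_3$, so after commuting these weights through $\nabla\times(-\Delta)^{-1}$ one needs $\|(x^2,xy,y^2)g\|_{L^2}$. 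The natural repair, namely running Gronwall on weighted norms of $g$, does not close: the transport equation satisfied by $x^\alpha g$ has source $x^\alpha(\partial_z\tilde u\cdot\nabla_h)\omega_z$, whose $L^2$ bound requires weights of order $|\alpha|+2$ on $g$, so each level of the weighted energy calls for the next one. The zero-vertical-mean remark only makes the order $-1$ operator bounded on $L^2$; it does nothing about the unbounded weights, and the hypothesis $(1,x,y)u^0\in L^2$ gives a priori bounds on the solution (the coefficients), not the linear-in-$g$ bound your Gronwall loop needs.

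This is exactly the point where the paper proceeds differently, by a purely local argument whose key ingredient you never invoke: the orthogonality constraint of Definition \ref{he}, which in Cartesian coordinates reads $u_3=yu_1-xu_2$. Combined with $\dv\,u=0$ and $\omega=(-y\omega_z,x\omega_z,\omega_z)$, it yields the pointwise identities $\partial_z(u_1+yu_3)=-\partial_y(ru_r)$ and $\partial_z(u_2-xu_3)=\partial_x(ru_r)$, with $ru_r=xu_1+yu_2$; hence the source term is the Jacobian $\partial_y(ru_r)\,\partial_x\omega_z-\partial_x(ru_r)\,\partial_y\omega_z$, with no Biot--Savart law and no weights, and the paper then cancels this Jacobian by product-rule manipulations, leaving the homogeneous equation $\partial_t\partial_z\omega_z+(\tilde u\cdot\nabla_h)\partial_z\omega_z=0$ and concluding by the maximum principle. (That cancellation is where all the work of the paper's proof lies.) So your strategy is genuinely different, nonlocal linearization plus energy estimates instead of an algebraic identity, but in the form presented it stops precisely at its decisive estimate, and the weight cascade described above is a concrete reason that estimate cannot hold in the unweighted form you claim.
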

\begin{proof}
By taking $\partial_z$ to the $\omega_z$ equation, we obtain
$$
\partial_t\partial_z\omega_z+(u_1+yu_3)\partial_x\partial_z\omega_z
+(u_2-xu_3)\partial_y\partial_z\omega_z=
-\partial_z(u_1+yu_3)\partial_x\omega_z
-\partial_z(u_2-xu_3)\partial_y\omega_z.
$$
The fact that ${\mathop{\rm div}}\,u =0,$ $\partial_x(v_1+yv_3)+\partial_y(v_2-xv_3)=0,$
$\omega=(-y\omega_z,x\omega_z,\omega_z)$ and $u_3=yu_1-xu_2,$ leads to
$$
\begin{aligned}
\partial_z(u_1+yu_3)&=x\omega_z+\partial_xu_3-y\partial_xu_1-y\partial_yu_2
\\&
=x\partial_xu_2-x\partial_yu_1+\partial_x(yu_1-xu_2)-y\partial_xu_1-y\partial_yu_2
\\&
=x\partial_xu_2-\partial_y(xu_1)+y\partial_xu_1-\partial_x(xu_2)-y\partial_xu_1-y\partial_yu_2
\\&
=-\partial_y(ru_r).
\end{aligned}
$$
A similar procedure gives rise to
$$
\partial_z(u_2-xu_3)=\partial_x(ru_r).
$$
Hence we obtain
$$
\begin{aligned}
\partial_z(u_1+yu_3)\partial_x\omega_z+\partial_z(u_2-xu_3)\partial_y\omega_z
&=\partial_y(-ru_r)\partial_x\omega_z+\partial_x(ru_r)\partial_y\omega_z
\\&
=\partial_x[\omega_z\partial_y(-ru_r)]+\partial_y[\omega_z\partial_x(ru_r)]
\\&
=\partial_x[-ru_r\partial_y\omega_z]+\partial_y[ru_r\partial_x\omega_z]
\\&
=\partial_x(-ru_r)\partial_y\omega_z+\partial_y(ru_r)\partial_x\omega_z,
\end{aligned}
$$
from which, we infer
$$
\partial_t\partial_z\omega_z+(u_1+yu_3)\partial_x\partial_z\omega_z
+(u_2-xu_3)\partial_y\partial_z\omega_z=0.
$$
Applying maximum principle and Gronwall's inequality, we deduce
$$
\partial_z\omega_z=0,
$$
and as a consequence $\partial_z\omega=0,$ because $\omega=(-y\omega_z,x\omega_z,\omega_z).$ This completes the proof of the proposition.
\end{proof}
To prove Theorem \ref{globale}, we need the following two technical lemmas:
\begin{lemma}\label{LP}
Let $v=(v^1,v^2,v^3)$  be a  divergence free vector field $2\pi$-periodic with respect the third variable, then
$$
\|\nabla_hv\|_{\mathscr{\dot B}^{0}_{\infty,1}}
\lesssim
\|\Omega\|_{\mathscr{\dot B}^{0}_{\infty,1}},
$$
$$
\|\dot \Delta_j\nabla_hv\|_{L^\infty(\RR^2\times]-\pi,\pi[)}
\lesssim
2^{\frac{j}{2}}\|\dot\Delta_j\Omega\|_{L^\infty(\RR^2\times]-\pi,\pi[)},\qquad j\geq0
$$
and
$$
\begin{aligned}
\|\dot S_0v(x_h,z)\|_{L^\infty(\RR^2\times]-\pi,\pi[)}
\lesssim
\|v\|_{L^2(\RR^2\times]-\pi,\pi[)}
+\|\Omega\|_{L^2(\RR^2\times]-\pi,\pi[)},
\end{aligned}
$$
with 
$$
\nabla_h=(\partial_x,\partial_y),\qquad\mbox{and}\qquad\Omega={\mathop{\rm curl}}\,v.
$$
\end{lemma}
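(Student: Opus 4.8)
The plan is to invert the curl through the Biot--Savart law and to read off the Fourier multipliers sending $\Omega$ to $\nabla_h v$. Since $\mathop{\rm div} v=0$ and $\Omega=\mathop{\rm curl} v$, we have $-\Delta v=\nabla\times\Omega$, so, writing the horizontal frequency as $\xi\in\RR^2$ and the $z$-Fourier mode as $n\in\ZZ$,
$$
\widehat{\partial_{x_a}v^{\,b}}(\xi,n)=\frac{-\,\xi_a\big((\xi,n)\times\widehat\Omega\big)_b}{|\xi|^2+n^2},\qquad a\in\{1,2\}.
$$
Each entry is thus produced from $\widehat\Omega$ by one of the two symbols
$$
m_1(\xi,n)=\frac{\xi_a\xi_c}{|\xi|^2+n^2},\qquad m_2(\xi,n)=\frac{\xi_a\,n}{|\xi|^2+n^2},
$$
both homogeneous of degree $0$ and bounded by $1$. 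Localising the horizontal frequency by $\varphi(2^{-j}\cdot)$ and setting $\xi=2^j\zeta$, the relevant symbol becomes $m_i(\zeta,t)\varphi(\zeta)$ with $t:=n/2^j$; on the support of $\varphi$ the denominator $|\zeta|^2+t^2\ge 9/16$ never vanishes, so this symbol and all its $\zeta$-derivatives are bounded uniformly in $t\in\RR$. Writing $K^{(t)}:=\mathcal F^{-1}\big[m_i(\cdot,t)\varphi\big]$, the $L^\infty\to L^\infty$ norm of the operator acting on the $n$-th mode equals $\|K^{(t)}\|_{L^1(\RR^2)}$, which is therefore $\lesssim 1$ uniformly in $(j,n)$.

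For the first estimate I work mode by mode. Since $\nabla_h$ and $\dot\Delta_j$ commute with the extraction of the $z$-Fourier coefficient, the $n$-th mode of $\dot\Delta_j\nabla_h v$ is the $2$D multiplier $m_i(\mathrm{D},n)$ applied to $\dot\Delta_j\Omega_n$, and the uniform bound above gives $\|\dot\Delta_j\nabla_h v_n\|_{L^\infty(\RR^2)}\lesssim\|\dot\Delta_j\Omega_n\|_{L^\infty(\RR^2)}$ with a constant independent of $(j,n)$. Summing over $j\in\ZZ$ and then over $n\in\ZZ$ yields $\|\nabla_h v\|_{\mathscr{\dot B}^{0}_{\infty,1}}\lesssim\|\Omega\|_{\mathscr{\dot B}^{0}_{\infty,1}}$.

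The second estimate is the delicate one, and I would \emph{not} route it through the single modes: bounding $\|\dot\Delta_j\nabla_h v\|_{L^\infty}$ by $\sum_n\|\dot\Delta_j\nabla_h v_n\|_{L^\infty}$ interchanges $\sup_{x_h}$ and $\sum_n$ and loses a divergent factor. Instead I would estimate the kernel $K_j$ of the operator $\dot\Delta_j\nabla_h v=T_j\Omega$ directly on $\RR^2\times]-\pi,\pi[$, using $\|\dot\Delta_j\nabla_h v\|_{L^\infty}\le\|K_j\|_{L^1(\RR^2\times]-\pi,\pi[)}\|\dot\Delta_j\Omega\|_{L^\infty}$. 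With $\sigma_j(\xi,n)=m_i(\xi,n)\varphi(2^{-j}\xi)$ the symbol of $T_j$, writing $K_j(x_h,z)=\frac1{2\pi}\sum_n k_{j,n}(x_h)e^{inz}$ with $k_{j,n}=\mathcal F_\xi^{-1}[\sigma_j(\cdot,n)]$, Cauchy--Schwarz in $z$ and Parseval give
$$
\|K_j\|_{L^1(\RR^2\times]-\pi,\pi[)}\le\int_{\RR^2}\Big(\sum_{n\in\ZZ}|k_{j,n}(x_h)|^2\Big)^{1/2}dx_h .
$$
The identity $k_{j,n}(x_h)=2^{2j}K^{(n/2^j)}(2^jx_h)$ and the substitution $y=2^jx_h$ turn the right-hand side into $\int_{\RR^2}\big(\sum_n|K^{(n/2^j)}(y)|^2\big)^{1/2}dy$; since the points $n/2^j$ are $2^{-j}$-spaced and $j\ge 0$, the inner sum is a Riemann sum comparable to $2^j\int_\RR|K^{(t)}(y)|^2\,dt$, which produces exactly the factor $2^{j/2}$. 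The main obstacle is to justify that $\int_{\RR^2}\big(\int_\RR|K^{(t)}(y)|^2\,dt\big)^{1/2}dy$ is finite: this is precisely where the decay of the symbols enters, namely $m_1(\zeta,t)=O(t^{-2})$ and $m_2(\zeta,t)=O(t^{-1})$ as $|t|\to\infty$ on the support of $\varphi$, which makes $K^{(t)}(y)$ Schwartz in $y$ uniformly and square-integrable in $t$. This is also why the loss $2^{j/2}$ is the natural one and why the restriction $j\ge 0$ is needed.

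For the third estimate, $\dot S_0 v$ has horizontal frequencies in a fixed ball, so I would combine a one-dimensional Sobolev embedding in $z$ with the Bernstein inequality of Lemma \ref{lb}. For fixed $x_h$, $H^1(]-\pi,\pi[)\hookrightarrow L^\infty$ gives $\|\dot S_0 v(x_h,\cdot)\|_{L^\infty_z}\lesssim\|\dot S_0 v(x_h,\cdot)\|_{L^2_z}+\|\partial_z\dot S_0 v(x_h,\cdot)\|_{L^2_z}$, and Bernstein in $x_h$ (with values in $L^2_z$) bounds $\sup_{x_h}$ by the $L^2(\RR^2\times]-\pi,\pi[)$ norm, whence $\|\dot S_0 v\|_{L^\infty}\lesssim\|\dot S_0 v\|_{L^2}+\|\partial_z\dot S_0 v\|_{L^2}$. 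The first term is at most $\|v\|_{L^2}$. For the second, the relations $\partial_z v^1=\Omega^2+\partial_x v^3$, $\partial_z v^2=\partial_y v^3-\Omega^1$ and $\partial_z v^3=-\partial_x v^1-\partial_y v^2$ (the last from $\mathop{\rm div} v=0$) express $\partial_z v$ through $\Omega$ and $\nabla_h v$; applying $\dot S_0$ and using Bernstein to absorb the horizontal derivatives on these low frequencies gives $\|\partial_z\dot S_0 v\|_{L^2}\lesssim\|v\|_{L^2}+\|\Omega\|_{L^2}$. Collecting the three bounds completes the proof.
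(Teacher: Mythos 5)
Your proof is correct and is essentially the paper's own argument: you decompose into Fourier modes in $z$, invert the Laplacian mode by mode to get the multipliers $\xi_h\xi_i/(n^2+|\xi_h|^2)$ and $n\xi_h/(n^2+|\xi_h|^2)$, bound their kernels in $L^1$ uniformly in $(j,n)$ for the first estimate, perform an $\ell^2$ summation over the $\sim 2^j$ contributing modes to produce the $2^{j/2}$ loss in the second (after a Cauchy--Schwarz that avoids the divergent $\sum_n\sup_{x_h}$ interchange, exactly as the paper does), and use the relations $\partial_zv^1=\Omega^2+\partial_xv^3$, $\partial_zv^2=-\Omega^1+\partial_yv^3$, $\partial_zv^3=-\partial_xv^1-\partial_yv^2$ together with Bernstein for the $\dot S_0$ bound. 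The remaining differences are presentational rather than substantive: your rescaling and Riemann-sum comparison $\sum_n|K^{(n/2^j)}(y)|^2\approx 2^j\int_\RR|K^{(t)}(y)|^2\,dt$ is precisely the paper's explicit sums \eqref{RR} (substitute $n=2^jt$) combined with its pointwise kernel bounds, and your embedding $H^1(]-\pi,\pi[)\hookrightarrow L^\infty$ is the paper's discrete Cauchy--Schwarz insertion of $\frac1n\cdot n$ in the low-frequency estimate.
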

\begin{proof}
We have
$$
v(x,y,z)=\sum_{n\in\ZZ}v_n(x,y)e^{inz}
$$
and
$$
\Omega(x,y,z)=\sum_{n\in\ZZ}\Omega_n(x,y)e^{inz},
$$
where $v_n$ is the Fourier coefficients are computed as follows
$$
v_n=\frac{1}{2\pi}\int_{-\pi}^{\pi} v(\cdot,\cdot,z)e^{-inz}dz.
$$
Using
$$
(\Delta_h+\partial^2_z)v=-{\mathop{\rm curl}}\,\Omega,
$$
we find that
$$
(-n^2+\Delta_h)v_{n}^3=\partial_y\Omega_{n}^1-\partial_x\Omega_{n}^2.
$$
Localizes it in horizontal Fourier
$$
\begin{aligned}
\mathcal{F}^h(\dot\Delta_jv_{n}^3)(\xi_h)&=\frac{\xi_1}{n^2+|\xi_h|^2}
\mathcal{F}^h(\dot\Delta_j\Omega_{n}^2)(\xi_h)
-\frac{\xi_2}{n^2+|\xi_h|^2}
\mathcal{F}^h(\dot\Delta_j\Omega_{n}^1)(\xi_h)
\\&
=\frac{\xi_1}{n^2+|\xi_h|^2}\widetilde\varphi(2^{-j}\xi_h)
\mathcal{F}^h(\dot\Delta_j\Omega_{n}^2)(\xi_h)
-\frac{\xi_2}{n^2+|\xi_h|^2}\widetilde\varphi(2^{-j}\xi_h)
\mathcal{F}^h(\dot\Delta_j\Omega_{n}^1)(\xi_h)
\end{aligned}
$$
with $\widetilde\varphi\in\mathcal{S}(\RR^2)$ is a smooth function supported in
$\mathcal{C}= \{\xi\in\RR^2,0<R_1\leq|\xi|\leq R_2\}$  such that
$\widetilde\varphi=1$ on support of $\varphi,$
thus
$$
\mathcal{F}^h(\nabla_h\dot\Delta_jv_{n}^3)(\xi_h)
=\frac{\xi_h\xi_1}{n^2+|\xi_h|^2}\widetilde\varphi(2^{-j}\xi_h)
\mathcal{F}^h(\dot\Delta_j\Omega_{n}^2)(\xi_h)
-\frac{\xi_h\xi_2}{n^2+|\xi_h|^2}\widetilde\varphi(2^{-j}\xi_h)
\mathcal{F}^h(\dot\Delta_j\Omega_{n}^1)(\xi_h).
$$
Let
$$
\mathcal{F}^h(K^i_n)(\xi_h)=
\frac{\xi_h\xi_i}{n^2+|\xi_h|^2}\widetilde\varphi(2^{-j}\xi_h)
\quad\mbox{for}\quad i=1,2,
$$
then
$$
\|K^i_n\|_{L^1}\lesssim\frac{2^{2j}}{n^2+2^{2j}}.
$$
We thus obtain
$$
\|\nabla_h\dot\Delta_jv_{n}^3\|_{L^\infty(\RR^2)}
\lesssim
\bigl(\|K^1_n\|_{L^1}+\|K^2_n\|_{L^1}\bigr)\|\dot\Delta_j\Omega_n\|_{L^\infty(\RR^2)}
\lesssim
\|\dot\Delta_j\Omega_n\|_{L^\infty(\RR^2)}.
$$
Therefore
$$
\|\nabla_hv^3\|_{\mathscr{\dot B}^{0}_{\infty,1}(\RR^2\times]-\pi,\pi[)}
\lesssim
\sum_{n,j}\|\dot\Delta_j\Omega_n\|_{L^\infty(\RR^2)}=
\|\Omega\|_{\mathscr{\dot B}^{0}_{\infty,1}}.
$$
A similar argument gives the same estimate for 
$\|\nabla_hv_i\|_{L^\infty(\RR^2\times]-\pi,\pi[)}$ for $i=1,2.$\\
For the second inequality, we have
\begin{equation}\label{RR}
\sum_{n\in\ZZ}\frac{n^2}{(n^2+\lambda^2)^2}\lesssim{\lambda}^{-1},
\qquad
\sum_{n\in\ZZ}\frac{1}{(n^2+\lambda^2)^2}\lesssim{\lambda}^{-3}
\qquad\forall\;\lambda\geq1
\end{equation}
and
$$
\sum_{n\in\ZZ}|\Omega_n|^2=\|\Omega\|_{L^2(]-\pi,\pi[)}^2.
$$
It follows that for $j\geq0$
$$
\begin{aligned}
\|\dot\nabla_h \Delta_jv\|_{L^\infty}
&\lesssim
\Bigl\{\Big\|\bigl(\sum_{n\in\ZZ}(K^i_n)^2\bigr)^{\frac{1}{2}}\Big\|_{L^1}
+\Big\|\bigl(\sum_{n\in\ZZ}(\kappa_n)^2\bigr)^{\frac{1}{2}}\Big\|_{L^1}\Bigr\}
\Big\|\bigl(\sum_{n\in\ZZ}|\dot \Delta_j\Omega_n|^2\bigr)^{\frac{1}{2}}\Big\|_{L^\infty},
\end{aligned}
$$
with
$$
\mathcal{F}^h\kappa_n(\xi_h)=
\frac{n\xi_h}{n^2+|\xi_h|^2}\widetilde\varphi(2^{-j}\xi_h).
$$
When $|x_h|\geq1,$ we obtained thanks to stationary phase Theorem
$$
|K^i_n(x_h)|+
|\kappa_n(x_h)|
\lesssim
\frac{|n|}{n^2+2^{2j}}\frac{1}{|x_h|^3}
$$
and for $|x_h|\le1,$ we have
$$
|K^i_n(x_h)|+
|\kappa_n(x_h)|
\lesssim
\frac{2^j|n|}{n^2+2^{2j}}+\frac{2^{2j}}{n^2+2^{2j}}.
$$
Finally thanks to \eqref{RR}, we have
$$
\|\dot \Delta_j\nabla_hv\|_{L^\infty(\RR^2\times]-\pi,\pi[)}
\lesssim
2^{\frac{1}{2}j}\|\dot\Delta_j\Omega\|_{\bigl(L^\infty(\RR^2);\,L^2(]-\pi,\pi[)\bigr)}
\lesssim
2^{\frac{1}{2}j}\|\Omega\|_{L^\infty(\RR^2\times]-\pi,\pi[)}.
$$
For the second inequality, we use the fact that
$$
\begin{aligned}
\|\dot S_0v\|_{L^\infty(\RR^2\times]-\pi,\pi[)}
&\lesssim
\sum_{n\in\ZZ,q\le0}\|\dot \Delta_qv_n\|_{L^\infty(\RR^2)}
\\&
\lesssim
\sum_{n\in\ZZ,q\le0}2^q\|\dot \Delta_qv_n\|_{L^2(\RR^2)}
\\&
\lesssim
\sum_{q\le0}2^q\|\dot \Delta_qv_0\|_{L^2(\RR^2)}
+\sum_{n\in\ZZ^*,q\le0}2^q\frac{1}{n}n\|\dot \Delta_qv_n\|_{L^2(\RR^2)}
\\&
\lesssim
\|v\|_{L^2(\RR^2\times]-\pi,\pi[)}
+\sum_{q\le0}2^q\|\partial_z\dot \Delta_qv\|_{L^2(\RR^2\times]-\pi,\pi[)},
\end{aligned}
$$
as
$$
\partial_zv^1=\Omega^2+\partial_xv^3,\quad
\partial_zv^2=-\Omega^1+\partial_yv^3\quad\mbox{and}\quad
\partial_zv^3=-\partial_xv1-\partial_yv^2.
$$
Therefore by virtue of Bernstein's inequality, we obtain
$$
\begin{aligned}
\|\dot S_0v\|_{L^\infty(\RR^2\times]-\pi,\pi[)}
&\lesssim
\|v\|_{L^2(\RR^2\times]-\pi,\pi[)}
+\|\Omega\|_{L^2(\RR^2\times]-\pi,\pi[)}
+\sum_{q\le0}2^q\|\nabla_h\dot \Delta_qv\|_{L^2(\RR^2\times]-\pi,\pi[)}
\\&
\lesssim
\|v\|_{L^2(\RR^2\times]-\pi,\pi[)}
+\|\Omega\|_{L^2(\RR^2\times]-\pi,\pi[)}.
\end{aligned}
$$
This gives the desired result.
\end{proof}
\begin{Rema}\label{LLT}
As 
$$
\partial_zv^1=\Omega^2+\partial_xv^3,\quad
\partial_zv^2=-\Omega^1+\partial_yv^3\quad\mbox{and}\quad
\partial_zv^3=-\partial_xv^1-\partial_yv2,
$$
then
$$
\|\nabla v\|_{\mathscr{\dot B}^{0}_{\infty,1}}
\lesssim
\|\Omega\|_{\mathscr{\dot B}^{0}_{\infty,1}}.
$$
Following a same approach, we obtain
$$
\|\nabla v\|_{\mathscr{\dot B}^{0}_{2,1}}
\lesssim
\|\Omega\|_{\mathscr{\dot B}^{0}_{2,1}}.
$$
\end{Rema}
\begin{lemma}\label{RF}
Let $v$  be divergence free vector field $2\pi$-periodic with respect the third variable, then
$$
\|\nabla_h(xv)\|_{\mathscr{\dot B}^{0}_{\infty,1}}
+\|\nabla_h(yv)\|_{\mathscr{\dot B}^{0}_{\infty,1}}
\lesssim
\|v\|_{L^2(\RR^2\times]-\pi,\pi[)}+
\|(x,y)\Omega\|_{\mathscr{\dot B}^{0}_{\infty,1}}
+\|\Omega\|_{\mathscr{\dot B}^{0}_{\infty,1}},
$$
$$
\|\dot \Delta_j\nabla_h\bigl((x,y)v\bigr)\|_{L^\infty(\RR^2\times]-\pi,\pi[)}
\lesssim
2^{\frac{j}{2}}
\bigl(\|\dot\Delta_j((x,y)\Omega\bigr)\|_{L^\infty(\RR^2\times]-\pi,\pi[)}
+\|\dot\Delta_j\Omega\|_{L^2(\RR^2\times]-\pi,\pi[)}\bigr),\qquad j\geq0
$$
and
$$
\begin{aligned}
\|\dot S_0(x,y)v\|_{L^\infty(\RR^2\times]-\pi,\pi[)}
\lesssim
\|(x,y)v\|_{L^2(\RR^2\times]-\pi,\pi[)}
+\|v\|_{L^2(\RR^2\times]-\pi,\pi[)}
+\|(x,y)\Omega\|_{L^2(\RR^2\times]-\pi,\pi[)},
\end{aligned}
$$
with
$$
\Omega={\mathop{\rm curl}}\,v.
$$
\end{lemma}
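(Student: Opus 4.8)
The strategy is to reduce each of the three estimates to its analogue in Lemma~\ref{LP}, the only genuinely new feature being that the weighted fields $xv$ and $yv$ fail to be divergence free. Writing $e_1=(1,0,0)$ and $e_2=(0,1,0)$, a direct computation using $\nabla x=e_1$ and $\nabla y=e_2$ gives the commutator identities
\begin{equation*}
\mathrm{curl}(xv)=x\Omega+e_1\times v=x\Omega+(0,-v^3,v^2),\qquad \mathrm{div}(xv)=v^1,
\end{equation*}
and symmetrically $\mathrm{curl}(yv)=y\Omega+(v^3,0,-v^1)$, $\mathrm{div}(yv)=v^2$. The crucial point is that the unbounded weight enters only through the single well-behaved quantity $(x,y)\Omega$, which is kept intact throughout, while all remaining terms are zeroth-order in $v$.

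Setting $w\in\{xv,yv\}$, I would then invert the vector Laplacian through $-\Delta w=\mathrm{curl}\,\mathrm{curl}\,w-\nabla\,\mathrm{div}\,w$ and pass to the Fourier coefficients in $z$, so that $\partial_z$ becomes multiplication by $in$ and $-\Delta$ becomes $n^2+|\xi_h|^2$ on the horizontal side. This expresses $\widehat{\nabla_h w_n}$ as the action, on $\dot\Delta_j(\mathrm{curl}\,w)_n$ and $\dot\Delta_j(\mathrm{div}\,w)_n$, of the symbols
\begin{equation*}
\frac{\xi_h\,\xi_i}{n^2+|\xi_h|^2}\,\widetilde\varphi(2^{-j}\xi_h)\quad(i=1,2)\qquad\text{and}\qquad \frac{n\,\xi_h}{n^2+|\xi_h|^2}\,\widetilde\varphi(2^{-j}\xi_h).
\end{equation*}
These are exactly the multipliers whose kernels $K_n^i,\kappa_n$ were estimated in Lemma~\ref{LP}, so I may reuse verbatim the bounds $\|K_n^i\|_{L^1},\|\kappa_n\|_{L^1}\lesssim 2^{2j}(n^2+2^{2j})^{-1}$, the stationary-phase pointwise estimates, and the summation rules \eqref{RR}.

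For the first inequality I would keep the frequencies $n$ and $j$ separated: convolving the kernels against $\dot\Delta_j(\mathrm{curl}\,w)_n=x\,\dot\Delta_j\Omega_n+\dot\Delta_j(e_1\times v)_n$ and $\dot\Delta_j(\mathrm{div}\,w)_n=\dot\Delta_j v^1_n$, summing first in $j$ (where $2^{2j}(n^2+2^{2j})^{-1}\le1$ turns each block into a $\dot B^0_{\infty,1}(\RR^2)$ norm) and then in $n$, yields $\|(x,y)\Omega\|_{\mathscr{\dot B}^0_{\infty,1}}+\|v\|_{\mathscr{\dot B}^0_{\infty,1}}$; the $v$-term is then absorbed into $\|\Omega\|_{\mathscr{\dot B}^0_{\infty,1}}+\|v\|_{L^2}$ via the Biot--Savart bound of Lemma~\ref{LP} and Remark~\ref{LLT}. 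For the dyadic inequality ($j\ge0$) I would instead use the $\ell^2_n$ Cauchy--Schwarz step together with \eqref{RR}: the weighted-curl term produces $2^{j/2}\|\dot\Delta_j((x,y)\Omega)\|_{L^\infty}$, while the $v$-term, being one vorticity order lower, produces $2^{j/2}\|\dot\Delta_j\Omega\|_{L^2}$ after invoking $\|\dot\Delta_j v\|_{L^2}\lesssim 2^{-j}\|\dot\Delta_j\Omega\|_{L^2}$ and a vector-valued Bernstein inequality.

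The low-frequency estimate follows the $\dot S_0$ argument of Lemma~\ref{LP} applied to $w=(x,y)v$: one bounds $\sum_{n,q\le0}2^q\|\dot\Delta_q w_n\|_{L^2}$ by Bernstein, isolates the block $n=0$ (giving $\|(x,y)v\|_{L^2}$) and, for $n\ne0$, writes $n\|\dot\Delta_q w_n\|_{L^2}=\|\partial_z\dot\Delta_q w_n\|_{L^2}$ with $\partial_z w=(x,y)\partial_z v$ and the relations $\partial_z v^1=\Omega^2+\partial_x v^3$, $\partial_z v^2=-\Omega^1+\partial_y v^3$, $\partial_z v^3=-\partial_x v^1-\partial_y v^2$; the horizontal derivatives are converted through $(x,y)\nabla_h v=\nabla_h((x,y)v)-(\text{unit})\,v$ and Bernstein into $\|(x,y)v\|_{L^2}+\|v\|_{L^2}$, and the vorticity part into $\|(x,y)\Omega\|_{L^2}$. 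The step I expect to be the main obstacle is precisely the non-commutation of the weights $x,y$ with the Littlewood--Paley projections and with the Biot--Savart operators: one cannot move $\dot\Delta_j$ across multiplication by $x$ for free. The commutator identities of the first paragraph are exactly what resolves this, confining the weight to $(x,y)\Omega$ and leaving only zeroth-order operators on $v$; once this reduction is made, the remaining analysis is a line-by-line transcription of Lemma~\ref{LP}.
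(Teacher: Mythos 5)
Your proposal is correct and takes essentially the same route as the paper's own proof: both start from the identity $-\Delta(xv)={\mathop{\rm curl}}\,(x\Omega)+(\hbox{first-order terms in }v)-\nabla\,{\mathop{\rm div}}(xv)$ (and its $y$ analogue), reduce all three estimates to the multiplier kernels $K^i_n,\kappa_n$ and the summation bounds \eqref{RR} already established in Lemma \ref{LP}, and control the low horizontal frequencies through $\|v\|_{L^2}$ via Bernstein, including the same $\partial_z$-commutation identities $x\partial_zv^1=x\Omega^2+\partial_x(xv^3)-v^3$, etc., in the $\dot S_0$ step. The only cosmetic difference is that you package the zeroth-order $v$ contributions as $\|v\|_{\mathscr{\dot B}^{0}_{\infty,1}}$ and then absorb this into $\|v\|_{L^2}+\|\Omega\|_{\mathscr{\dot B}^{0}_{\infty,1}}$, whereas the paper performs the low/high frequency splitting on those terms directly; the underlying estimates are identical.
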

\begin{proof}
We have
$$
-\Delta(xv)={\mathop{\rm curl}}\bigl({\mathop{\rm curl}}(xv)\bigr)-\nabla v^1
={\mathop{\rm curl}}(x\Omega)
+\left(\begin{array}{c}\partial_yv^2-\partial_xv^1 \\-\partial_xv^2-\partial_yv^1
 \\-\partial_xv^3-\partial_zv1\end{array}\right)
$$
and $xv$ is $2\pi$-periodic with respect the third variable. Then we deduce forum 
Lemma \ref{LP} that
$$
\|\nabla_h(-\Delta)^{-1}{\mathop{\rm curl}}(x\Omega)\|_{\mathscr{\dot B}^{0}_{\infty,1}}
\lesssim
\|x\Omega\|_{\mathscr{\dot B}^{0}_{\infty,1}}.
$$
For the second terme, we write from the definition of $\Omega$
$$
\partial_zv^1=\Omega^2+\partial_xv^3,
$$
$$
\sum_{n\in\ZZ}\frac{1}{n^2+2^{2j}}\lesssim\left\{\begin{array}{l}
\displaystyle 2^{-j},\quad\mbox{if $j\geq0$}\\
\displaystyle 2^{-2j},\quad\mbox{if $j\leq0$}
\end{array}
\right.
$$
and
$$
\|\dot\Delta_jv_{n}^3\|_{L^\infty(\RR^2)}
\lesssim
\|\dot\Delta_jv^3\|_{L^2(]-\pi,\pi[,\,L^\infty(\RR^2))},
$$
then
$$
\begin{aligned}
\|\nabla_h(-\Delta)^{-1}\partial_xv^3\|_{\mathscr{\dot B}^{0}_{\infty,1}}
&\lesssim
\sum_{j\le0}\|\dot\Delta_jv^3\|_{L^2(]-\pi,\pi[,\,L^\infty(\RR^2))}
+\sum_{j\geq0}\|\dot\Delta_j\nabla_hv^3\|_{L^\infty(\RR^2\times]-\pi,\pi[)}
\\&
\lesssim
\sum_{j\le0}2^j\|\dot\Delta_jv^3\|_{L^2(\RR^2\times]-\pi,\pi[)}
+\sum_{j\geq0}\|\dot\Delta_j\nabla_hv^3\|_{L^\infty(\RR^2\times]-\pi,\pi[)}
\\&
\lesssim
\|v\|_{L^2(\RR^2\times]-\pi,\pi[)}+\|\Omega\|_{\mathscr{\dot B}^{0}_{\infty,1}}
\end{aligned}
$$
and
$$
\begin{aligned}
\|\nabla_h(-\Delta)^{-1}\Omega\|_{\mathscr{\dot B}^{0}_{\infty,1}}
\lesssim
\|\Omega\|_{\mathscr{\dot B}^{0}_{\infty,1}}.
\end{aligned}
$$ 
For the last inequality, we deduce by Lemma \ref{LP} and Bernstein inequality
$$
\|\dot\Delta_j\nabla_h(-\Delta)^{-1}{\mathop{\rm curl}}(x\Omega)\|_{L^\infty}
\lesssim
2^{\frac{1}{2}j}\|\dot\Delta_j(x\Omega)\|_{L^\infty}
$$
and
$$
\|\dot\Delta_j\nabla_h(-\Delta)^{-1}\nabla v\|_{L^\infty}
\lesssim
2^{\frac{1}{2}j}\|\dot\Delta_j\Omega\|_{L^2}
$$
Finally for the last inequality, let us use the fact that
$$
\begin{aligned}
\|\dot S_0(xv)\|_{L^\infty(\RR^2\times]-\pi,\pi[)}
&\lesssim
\sum_{n\in\ZZ,q\le0}\|\dot \Delta_q(xv_n)\|_{L^\infty(\RR^2)}
\\&
\lesssim
\sum_{n\in\ZZ,q\le0}2^q\|\dot \Delta_q(xv_n)\|_{L^2(\RR^2)}
\\&
\lesssim
\sum_{q\le0}2^q\|\dot \Delta_q(xv_0)\|_{L^2(\RR^2)}
+\sum_{n\in\ZZ^*,q\le0}2^q\frac{1}{n}n\|\dot \Delta_q(xv_n)\|_{L^2(\RR^2)}
\\&
\lesssim
\|xv\|_{L^2(\RR^2\times]-\pi,\pi[)}
+\sum_{q\le0}2^q\|\partial_z\dot \Delta_q(xv)\|_{L^2(\RR^2\times]-\pi,\pi[)},
\end{aligned}
$$
as
$$
\begin{aligned}
x\partial_zv^1=x\Omega^2+&\partial_x(xv^3)-v^3,\quad
x\partial_zv^2=-x\Omega^1+\partial_y(xv^3)
\\&
\mbox{and}\quad
x\partial_zu_3=-\partial_x(xv^1)-\partial_y(xv^2)+v^1,
\end{aligned}
$$
Therefore by virtue of Bernstein inequality, we obtain
$$
\begin{aligned}
\|\dot S_0(xv)\|_{L^\infty(\RR^2\times]-\pi,\pi[)}
&\lesssim
\|xv\|_{L^2(\RR^2\times]-\pi,\pi[)}
+\|v\|_{L^2(\RR^2\times]-\pi,\pi[)}
+\|x\Omega\|_{L^2(\RR^2\times]-\pi,\pi[)}
\\&
+\sum_{q\le0}2^q\|\nabla_h\dot \Delta_q(xv)\|_{L^2(\RR^2\times]-\pi,\pi[)}
\\&
\lesssim
\|xv\|_{L^2(\RR^2\times]-\pi,\pi[)}
+\|v\|_{L^2(\RR^2\times]-\pi,\pi[)}
+\|x\Omega\|_{L^2(\RR^2\times]-\pi,\pi[)}.
\end{aligned}
$$
Similar for $yv.$ This achieves the proof of the Lemma.
\end{proof}

\section{Proof of theorem \ref{globale}}
\subsection{Some a priori estimates}
According to \cite{rd}, we deduce the following proposition.
\begin{proposition}\label{henda}
Let $u$ be an helicoidal solution of ${\rm(E)},$ then we have for every $t\in \RR_+$, 
\begin{equation}\label{LI}
\|u(t)\|_{L^{\infty}}+\|\omega(t)\|_{L^{\infty}}
\le
C\bigl(\|u^0\|_{L^2}+\|\omega^0\|_{L^\infty\cap L^2}\bigr)
e^{Ct\|\omega^0_z\|_{L^\infty\cap L^2}}
\end{equation}
and
$$
\|(xu,yu)(t)\|_{L^{\infty}}+\|(x\omega,y\omega)(t)\|_{L^{\infty}}
\le
C\bigl(\|(1,x,y)u^0\|_{L^2}+\|(1,x,y)\omega^0\|_{L^\infty\cap L^2}\bigr)
e^{Ct\|\omega^0_z\|_{L^\infty\cap L^2}}.
$$
\end{proposition}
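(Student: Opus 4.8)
The starting point is the observation, from the third line of Corollary~\ref{C1}, that the vertical vorticity $\omega_z$ is transported by the horizontal velocity $\tilde u=(u_1+yu_3,\,u_2-xu_3)$, which is divergence free in the variables $(x,y)$. Hence, testing the transport equation against $|\omega_z|^{p-1}\mathrm{sgn}(\omega_z)$ and integrating by parts over $\RR^2\times]-\pi,\pi[$, every Lebesgue norm is conserved:
$$
\|\omega_z(t)\|_{L^p}=\|\omega^0_z\|_{L^p},\qquad p\in[1,\infty],\ t\in\RR_+.
$$
In particular $\|\omega_z(t)\|_{L^\infty\cap L^2}$ is constant in time, and this conserved quantity is exactly what will play the role of the growth rate in the final exponential. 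Recalling also $\omega=(-y\omega_z,\,x\omega_z,\,\omega_z)$, the two stretching terms in Corollary~\ref{C1} rewrite as $\omega_2u_3-\omega_zu_2=-\omega_z\tilde u_2$ and $\omega_zu_1-\omega_1u_3=\omega_z\tilde u_1$, so that the whole nonlinear forcing of the vorticity is of the schematic form $\omega_z\,\tilde u$.

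Next I would bound the velocity in terms of the vorticity and the energy. Splitting $u=\dot S_0u+\sum_{j\ge0}\dot\Delta_ju$ and invoking Lemma~\ref{LP}, the low frequencies are controlled by $\|u\|_{L^2}+\|\omega\|_{L^2}$, while for $j\ge0$ one has $\|\dot\Delta_ju\|_{L^\infty}\lesssim2^{-j/2}\|\dot\Delta_j\omega\|_{L^\infty}$, a summable gain that yields
$$
\|u(t)\|_{L^\infty}\lesssim\|u\|_{L^2}+\|\omega\|_{L^2}+\|\omega\|_{L^\infty},
$$
with the analogous weighted bound furnished by Lemma~\ref{RF}. The crucial point is that this is a genuine $L^\infty$ estimate with \emph{no} logarithmic loss, which is what ultimately keeps the time growth single-exponential. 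On the vorticity side, since $\tilde u$ is divergence free the transport operator preserves $L^\infty$, and the maximum principle applied to the first two lines of Corollary~\ref{C1} gives
$$
\|\omega(t)\|_{L^\infty}\le\|\omega^0\|_{L^\infty}+\int_0^t\|\omega_z(\tau)\|_{L^\infty}\,\|\tilde u(\tau)\|_{L^\infty}\,d\tau,
$$
together with $\|\tilde u\|_{L^\infty}\le\|u\|_{L^\infty}+\|(x,y)u\|_{L^\infty}$.

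Combining the two ingredients, I would close the estimate by a coupled Gronwall argument. Set $X(t)=\|u(t)\|_{L^\infty}+\|\omega(t)\|_{L^\infty}$ and $Y(t)=\|(x,y)u(t)\|_{L^\infty}+\|(x,y)\omega(t)\|_{L^\infty}$. Energy conservation keeps $\|u(t)\|_{L^2}=\|u^0\|_{L^2}$ fixed, while the remaining $L^2$ norms of the vorticity entering the velocity estimate are, through $\omega=(-y\omega_z,x\omega_z,\omega_z)$, controlled by $\|\omega_z\|_{L^2}$ and the weighted quantities already present in $X,Y$. The forcing $\omega_z\tilde u$ then contributes a term bounded by $\|\omega^0_z\|_{L^\infty}(X+Y)$. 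For the weighted statement one multiplies the equations of Corollary~\ref{C1} by $x$ and $y$; commuting the weight through the transport operator, $(\tilde u\cdot\nabla_h)(xf)=x(\tilde u\cdot\nabla_h)f+\tilde u_1f$, produces extra source terms that are again linear in $X$ and $Y$ with coefficient controlled by the conserved $\omega_z$, the weighted velocity being supplied by Lemma~\ref{RF}. This yields a closed differential inequality $\frac{d}{dt}(X+Y)\lesssim\|\omega^0_z\|_{L^\infty\cap L^2}(X+Y)+C_0$, and Gronwall's lemma gives the claimed bound $C_0\,e^{Ct\|\omega^0_z\|_{L^\infty\cap L^2}}$. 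The main obstacle is precisely this coupling: one must arrange the inequalities so that the multiplicative rate in the Gronwall step is always the \emph{conserved} norm of $\omega_z$ and never one of the growing quantities $X$ or $Y$; any logarithmic loss in the Biot--Savart bound, or any failure to absorb the weighted velocity linearly, would turn the single exponential into a double exponential.
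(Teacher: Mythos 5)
Your overall architecture (maximum principle for the vorticity, conservation of all $L^p$ norms of $\omega_z$, Serfati-type Littlewood--Paley bound $\|u\|_{L^\infty}\lesssim\|u\|_{L^2}+\|\omega\|_{L^2}+\|\omega\|_{L^\infty}$ with no logarithmic loss, then Gronwall) is the same as the paper's, and those ingredients are sound. But there is a genuine gap in the weighted part: your ``coupled Gronwall'' never controls $\|(x,y)u(t)\|_{L^2}$, and this quantity is unavoidable. The low-frequency part of the weighted velocity bound in Lemma \ref{RF} reads $\|\dot S_0(x,y)u\|_{L^\infty}\lesssim\|(x,y)u\|_{L^2}+\|u\|_{L^2}+\|(x,y)\omega\|_{L^2}$, so $Y(t)$ cannot be estimated without it. Unlike $\|u\|_{L^2}$, the weighted norm $\|xu\|_{L^2}$ is \emph{not} conserved: commuting the weight through the equation produces
$$
\partial_t(xu)+(u\cdot\nabla)(xu)+\nabla(x\Pi)=\bigl(\Pi+u_1^2,\;u_1u_2,\;u_1u_3\bigr),
$$
so the pressure itself appears as a source, and one needs the estimate $\|\Pi\|_{L^2}\lesssim\|u\|_{L^\infty}\|u\|_{L^2}$ (which in the periodic strip $\RR^2\times]-\pi,\pi[$ requires its own Fourier-multiplier argument, carried out in the paper via $-\Delta\Pi=\mathop{\rm div}\mathop{\rm div}(u\otimes u)$ and the bound $\frac{2^{2j}+n^2+|n|2^j}{n^2+2^{2j}}\lesssim1$). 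This yields $\|xu\|_{L^2}\le C_0e^{C_0t}$, which is exactly the growing source term your differential inequality is missing; without it the Gronwall loop does not close.

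A second, lesser issue: for the \emph{unweighted} estimate \eqref{LI} you use the Cartesian form of Corollary \ref{C1}, whose stretching term is $\omega_z\tilde u$ with $\|\tilde u\|_{L^\infty}\le\|u\|_{L^\infty}+\|(x,y)u\|_{L^\infty}$. This couples $X$ to $Y$ from the start, so your constant in the first inequality would depend on the weighted data norms $\|(x,y)u^0\|_{L^2}$, $\|(x,y)\omega^0\|_{L^\infty\cap L^2}$ --- a strictly weaker statement than \eqref{LI}, which involves only $\|u^0\|_{L^2}$, $\|\omega^0\|_{L^\infty\cap L^2}$ and $\|\omega^0_z\|_{L^\infty\cap L^2}$. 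The paper avoids this by working with the cylindrical form from Proposition \ref{mahdi},
$$
\partial_t\omega+(u\cdot\nabla)\omega=\omega_z(u_re_\theta-u_\theta e_r),
$$
whose right-hand side is pointwise bounded by $|\omega_z|\,|u|$ (no weights), so the unweighted estimate is self-contained and the weighted one can then be run \emph{afterwards}, using \eqref{LI} as input, rather than simultaneously.
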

\begin{proof}
Since $\omega,$ satisfies the following equation
$$
\partial_t \omega+(u\cdot\nabla)\omega=\omega_z(u_re_{\theta}-u_{\theta}e_r),
$$
thus, from the maximum principle we obtain
$$
\|\omega(t)\|_{L^p}
\le
\|\omega^0\|_{L^p}+\int_0^t\|u(\tau)\|_{L^\infty}\|\omega_z(\tau)\|_{L^p}d\tau
\qquad\forall\;p\in[1,\infty].
$$
Since $\omega_z$ satisfies the transport equation, we have
$$
\|\omega_z(t)\|_{L^p}
\le
\|\omega^0_z\|_{L^p},
$$
then
$$
\|\omega(t)\|_{L^p}
\le
\|\omega^0\|_{L^p}+\|\omega^0_z\|_{L^p}\int_0^t\|u(\tau)\|_{L^\infty}d\tau.
$$
To estimate the $L^\infty$ norm of the velocity, we use an argument of P. Serfati \cite{serfati} and Lemma \ref{LP}
$$
\begin{aligned}
\|u(t)\|_{L^\infty(\RR^2\times]-\pi,\pi[)}
&\leq \|\dot S_{0}u\|_{L^\infty}+
\sum_{q\geq  0}\|\dot \Delta_qu\|_{L^\infty}
\\&
\lesssim
\|u\|_{L^2(\RR^2\times]-\pi,\pi[)}+\|\omega\|_{L^2(\RR^2\times]-\pi,\pi[)}
+\sum_{q\geq  0}\|\dot \Delta_qu\|_{L^\infty}.
\end{aligned}
$$  
By Bernstein inequality and Lemma \ref{LP}, we deduce \footnote{ We recall 
the classical fact $\|\dot\Delta_q u\|_{L^p}\approx 2^{-q}\|\dot\Delta_q \omega\|_{L^p}$ uniformly 
in $q,$ for every $p\in [1,+\infty]$.  }
 $$
\sum_{q\geq 0}\|\dot\Delta_q u\|_{L^\infty}
\lesssim 
\|\omega\|_{L^\infty}.  
$$
Consequently, we obtain
$$
\begin{aligned}
\|u(t)\|_{L^\infty(\RR^2\times]-\pi,\pi[)}
&\lesssim
\|u^0\|_{L^2}+\|\omega^0\|_{L^\infty\cap L^2}
+\|\omega^0_z\|_{L^\infty\cap L^2}
\int_0^t\|u(\tau)\|_{L^\infty}d\tau.
\end{aligned}
$$
Using Gronwall's inequality, we have
$$
\|u(t)\|_{L^\infty}
\le
C\bigl\|u^0\|_{L^2}+\|\omega^0\|_{L^\infty\cap L^2}\bigr)
e^{Ct\|\omega^0_z\|_{L^\infty\cap L^2}}.
$$
By maximum principle, Gronwall's inequality and inequality \eqref{LI}, we deduce
$$
\|(x,y)\omega\|_{L^\infty_t(L^p)}
\lesssim
\|(x,y)\omega^0\|_{L^p}+t\|\omega\|_{L^\infty_t(L^p)}\|u\|_{L^\infty_t(L^\infty)}
\qquad\forall\;p\in[1,\infty].
$$
For concluded the proof stays to controlled $\|(x,y)u\|_{L^2},$ we have
$$
\partial_t(xu)+(u\cdot\nabla)(xu)+\nabla(xp)=\left(\begin{array}{c}p+(u_1)^2 \\ u_1u_2\\u_1u_3\end{array}\right),
$$
from which, we deduce
$$
\begin{aligned}
\frac{1}{2}\frac{d}{dt}\|xu\|_{L^2}^2
&=\int_{\RR^2\times]-\pi,\pi[} xu_1\bigl(2p+(u_1)^2+(u_2)^2+(u_3)^2\bigr)dxdydz
\\&
\lesssim
\|xu\|_{L^2}\bigl(\|p\|_{L^2}+\|u\|_{L^\infty}\|u\|_{L^2}\bigr).
\end{aligned}
$$
As
$$
-\Delta p={\mathop{\rm div}}{\mathop{\rm div}}(u\otimes u),
$$
Parseval's equality and the following inequality 
$$
\frac{2^{2j}+n^2+|n|2^j}{n^2+2^{2j}}\lesssim1
$$
we follow the same approach in the proof of Lemma \ref{LP}, we obtain
$$
\|p\|_{L^2(\RR^2\times]-\pi,\pi[)}
\lesssim
\|u\otimes u\|_{L^2}
\lesssim
\|u\|_{L^\infty}\|u\|_{L^2}.
$$
Then
\begin{equation}\label{VS}
\|xu\|_{L^2}
\lesssim
\|xu^0\|_{L^2}+t\|u\|_{L^\infty_t(L^2)}\|u\|_{L^\infty_t(L^\infty)}
\lesssim
\|xu^0\|_{L^2}+t\|u^0\|_{L^2}\|u\|_{L^\infty_t(L^\infty)}
\le
C_0e^{C_0t}.
\end{equation}
As a consequence, we obtain
$$
\begin{aligned}
\|xu\|_{L^\infty(\RR^2\times]-\pi,\pi[)}
&\leq \|\dot S_{0}(xu)\|_{L^\infty}+
\sum_{q\geq  0}\|\dot \Delta_q(xu)\|_{L^\infty}
\\&
\lesssim
\|u\|_{L^2}+\|xu\|_{L^2}+\|\omega\|_{L^2}
+\|x\omega\|_{L^2}
\\&
\lesssim
\|u^0\|_{L^2}+\|xu^0\|_{L^2}
+\bigl(\|u^0\|_{L^2}+\|\omega^0\|_{L^\infty\cap L^2}+\|x\omega^0\|_{L^2}\bigr)
e^{Ct\|\omega^0_z\|_{L^\infty\cap L^2}}
\\&
\le
C\bigl(\|u^0\|_{L^2}+\|xu^0\|_{L^2}+\|\omega^0\|_{L^\infty\cap L^2}+\|x\omega^0\|_{L^2}\bigr)
e^{Ct\|\omega^0_z\|_{L^\infty\cap L^2}}.
\end{aligned}
$$  
And a similar argument gives the same estimate for $\|yu\|_{L^\infty}.$ Hence the proposition.
\end{proof}
The evolution of the quantity $\|\nabla u\|_{L^1_t(L^\infty)}$ is related to the following result: 
\begin{proposition}\label{lipsh}
There exists a decomposition $(\tilde\omega_{q,n})_{(q,n)\in \ZZ^2}$ of the vorticity $\omega$ such that\\
i) For every $t\in \RR_+,$ we have
$$
 \omega=\displaystyle {\sum_{(q,n)\in \ZZ^2}}\tilde \omega_{q,n}e^{inz}
$$
and
$$
\dv\, \tilde \omega _{q,n}(t,x)=0.
$$
ii) For every $(q,n)\in \ZZ^2,$ we have
$$
||\tilde \omega_{q,n}(t)||_{L^{\infty}}
\lesssim 
\bigl(||\dot \Delta _q \omega^0_n||_{L^{\infty}}
+c_q\|\omega^0_{z,n}\|_{\dot B^0_{2,1}}\bigr)e^{C_0t}
$$
where $C_0$ is a constant depending on $u^0$ and $c_q\in\ell^1(\ZZ)$ 
(see Proposition \ref{proposition}).\\
iii) For every $(j,q,n)\in \ZZ^3,$ we have
$$
||\dot \Delta_j\tilde \omega_{q,n}(t)||_{L^{\infty}}
\leq 
C_02^{-|j-q|}e^{C_0U(t)}\bigl(||\dot \Delta_q \omega^0_n||_{L^{\infty}}
+c_q\|\omega^0_{z,n}\|_{\dot B^0_{2,1}}\bigr),
$$
with $U(t):=||\tilde u||_{L_t^1(B_{\infty,1}^1)}+||u||_{L_t^1(\dot B_{2,1}^1)}.$ \end{proposition}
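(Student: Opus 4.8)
The three assertions all flow from the \emph{linear} transport structure of the systems defining the $\tilde\omega_{q,n}$; the only genuinely delicate point is the sharp off-diagonal frequency decay in iii). For i), I would first note that each system is linear in its unknown and shares the same transporting field $\tilde u$, so summing over $q\in\ZZ$ and using $\sum_{q}\dot\Delta_q\omega^0_{n,z}=\omega^0_{n,z}$ shows that $\sum_q\tilde\omega_{q,n}$ solves the unlocalized system with data $(-y\omega^0_{n,z},x\omega^0_{n,z},\omega^0_{n,z})$; by uniqueness for linear transport together with Proposition \ref{Fourier} this sum is exactly the $n$-th Fourier coefficient $\omega_n$, whence $\omega=\sum_n\omega_n e^{inz}=\sum_{(q,n)}\tilde\omega_{q,n}e^{inz}$. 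For incompressibility, I would check that the helicoidal relations $\tilde\omega_{1,q,n}=-y\tilde\omega_{z,q,n}$ and $\tilde\omega_{2,q,n}=x\tilde\omega_{z,q,n}$ hold at $t=0$ and are compatible with the three equations (a one-line substitution using $\tilde u\cdot\nabla_h y=\tilde u_2$, $\tilde u\cdot\nabla_h x=\tilde u_1$ and the third equation), hence propagate in time by uniqueness. The mode-$n$ divergence then reduces to $\dv\,\tilde\omega_{q,n}=in\tilde\omega_{z,q,n}+(x\partial_y-y\partial_x)\tilde\omega_{z,q,n}$, which vanishes at $t=0$ because $\dot\Delta_q$ commutes with the rotation $x\partial_y-y\partial_x$ and because $(y\partial_x-x\partial_y)\omega^0_{n,z}=in\omega^0_{n,z}$ (the identity behind Corollary \ref{C1}); that it remains zero is then obtained exactly as the transport of the divergence in Proposition \ref{mahdi} i), applied to the present system.

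For ii), the third equation is a pure transport by the divergence free field $\tilde u$, so the maximum principle gives $\|\tilde\omega_{z,q,n}(t)\|_{L^\infty}\le\|\dot\Delta_q\omega^0_{n,z}\|_{L^\infty}$ for every $t$. The first two equations are transport equations with sources $\mp\tilde\omega_{z,q,n}\tilde u_{2/1}$, whence, for $i=1,2$,
$$
\|\tilde\omega_{i,q,n}(t)\|_{L^\infty}\le\|\tilde\omega_{i,q,n}(0)\|_{L^\infty}+\int_0^t\|\tilde\omega_{z,q,n}(\tau)\|_{L^\infty}\,\|\tilde u(\tau)\|_{L^\infty}\,d\tau .
$$
The data are controlled by writing $y\,\dot\Delta_q\omega^0_{n,z}=\dot\Delta_q(y\omega^0_{n,z})-[\dot\Delta_q,y]\omega^0_{n,z}$ and estimating the commutator by $\|[\dot\Delta_q,y]\omega^0_{n,z}\|_{L^\infty}\lesssim 2^{-q}\|\dot\Delta_q\omega^0_{n,z}\|_{L^\infty}\lesssim\|\dot\Delta_q\omega^0_{n,z}\|_{L^2}=c_q\|\omega^0_{n,z}\|_{\dot B^0_{2,1}}$ (Bernstein's inequality and Remark \ref{rmk1.1}), so that $\|\tilde\omega_{i,q,n}(0)\|_{L^\infty}\lesssim\|\dot\Delta_q\omega^0_n\|_{L^\infty}+c_q\|\omega^0_{n,z}\|_{\dot B^0_{2,1}}$. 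Finally $\|\tilde u(\tau)\|_{L^\infty}\lesssim\|u(\tau)\|_{L^\infty}+\|(xu,yu)(\tau)\|_{L^\infty}\le C_0e^{C_0\tau}$ by Proposition \ref{henda}; inserting these bounds and integrating yields ii).

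For iii), I would apply $\dot\Delta_j$ to each equation and recast it as
$$
\partial_t\dot\Delta_j\tilde\omega_{q,n}+(\tilde u\cdot\nabla_h)\dot\Delta_j\tilde\omega_{q,n}=[\tilde u\cdot\nabla_h,\dot\Delta_j]\tilde\omega_{q,n}+\dot\Delta_j F_{q,n},\qquad F_{q,n}=(-\tilde\omega_{z,q,n}\tilde u_2,\ \tilde\omega_{z,q,n}\tilde u_1,\ 0).
$$
The estimate then rests on the Vishik-type transport inequality of Proposition \ref{proposition}, which for a solution of $\partial_t f+\tilde u\cdot\nabla_h f=g$ provides
$$
\|\dot\Delta_j f(t)\|_{L^\infty}\le\|\dot\Delta_j f(0)\|_{L^\infty}+\int_0^t\|\dot\Delta_j g(\tau)\|_{L^\infty}d\tau+C\int_0^t\|\nabla_h\tilde u(\tau)\|_{L^\infty}\sum_{k}2^{-|j-k|}\|\dot\Delta_k f(\tau)\|_{L^\infty}\,d\tau,
$$
together with its associated Grönwall lemma turning data and source localized near frequency $2^q$ into the gain $2^{-|j-q|}e^{C U(t)}$, with $U(t)=\|\tilde u\|_{L^1_t(B^1_{\infty,1})}+\|u\|_{L^1_t(\dot B^1_{2,1})}$ bounding $\int_0^t\|\nabla_h\tilde u\|_{L^\infty}$ via Lemmas \ref{LP} and \ref{RF}. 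Two inputs feed this scheme. The data $\dot\Delta_j\tilde\omega_{q,n}(0)$ is localized near $|j-q|\lesssim1$: its third component is $\dot\Delta_j\dot\Delta_q\omega^0_{n,z}$, while the first two equal $\mp\dot\Delta_j(y\,\dot\Delta_q\omega^0_{n,z})$, which after the commutator splitting of ii) are again supported in $|j-q|\lesssim1$ up to the summable remainder, so $\|\dot\Delta_j\tilde\omega_{q,n}(0)\|_{L^\infty}\lesssim 2^{-|j-q|}\bigl(\|\dot\Delta_q\omega^0_n\|_{L^\infty}+c_q\|\omega^0_{n,z}\|_{\dot B^0_{2,1}}\bigr)$. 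For the source, I would apply the scheme first to the \emph{unforced} third equation to obtain $\|\dot\Delta_j\tilde\omega_{z,q,n}(t)\|_{L^\infty}\lesssim 2^{-|j-q|}e^{CU(t)}\|\dot\Delta_q\omega^0_{n,z}\|_{L^\infty}$, and then feed this into a Bony decomposition of the products $\tilde\omega_{z,q,n}\tilde u_i$, using $\nabla_h\tilde u\in L^\infty$, to get $\|\dot\Delta_j F_{q,n}(\tau)\|_{L^\infty}\lesssim 2^{-|j-q|}\|\nabla_h\tilde u(\tau)\|_{L^\infty}e^{CU(\tau)}\|\dot\Delta_q\omega^0_{n,z}\|_{L^\infty}$, preserving the decay.

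With data and source localized, the Grönwall lemma for the inequality above closes and delivers the claimed bound
$$
\|\dot\Delta_j\tilde\omega_{q,n}(t)\|_{L^\infty}\le C_0\,2^{-|j-q|}e^{C_0U(t)}\bigl(\|\dot\Delta_q\omega^0_n\|_{L^\infty}+c_q\|\omega^0_{n,z}\|_{\dot B^0_{2,1}}\bigr).
$$
The main obstacle is precisely this last step: propagating the \emph{sharp} off-diagonal bound $2^{-|j-q|}$ through the transport dynamics. The commutator $[\tilde u\cdot\nabla_h,\dot\Delta_j]$ must be controlled with exactly the off-diagonal kernel $\sum_k 2^{-|j-k|}\|\dot\Delta_k\,\cdot\|_{L^\infty}$ (a Bernstein and commutator computation requiring only $\nabla_h\tilde u\in L^\infty$), and the discrete convolution $\sum_k 2^{-|j-k|}2^{-|k-q|}\lesssim(1+|j-q|)2^{-|j-q|}$ that appears must be absorbed into the exponential $e^{C_0U}$ in the Grönwall argument of Proposition \ref{proposition}, so that the spurious polynomial factor does not degrade the geometric decay. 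Everything else is maximum principle, Bernstein and commutator bookkeeping.
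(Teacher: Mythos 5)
Your parts i) and ii) are essentially the paper's own argument: the identification $\omega=\sum_{(q,n)}\tilde\omega_{q,n}e^{inz}$ by linearity and uniqueness of the linearized transport systems, the propagation of the structure $\tilde\omega_{1,q,n}=-y\tilde\omega_{z,q,n}$, $\tilde\omega_{2,q,n}=x\tilde\omega_{z,q,n}$ (which you obtain by direct substitution where the paper invokes Proposition \ref{mahdi}), and the maximum principle combined with the commutator $[\dot\Delta_q,x_i]$ of Proposition \ref{proposition} and the bound $\|\tilde u\|_{L^\infty}\le C_0e^{C_0t}$ from Proposition \ref{henda}. Those two parts are fine.

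Part iii) contains a genuine gap, and it sits exactly at the point you yourself call ``the main obstacle''. Your scheme is a single inequality with off-diagonal kernel, $\|\dot\Delta_j f(t)\|_{L^\infty}\le\|\dot\Delta_jf(0)\|_{L^\infty}+\int_0^t\|\dot\Delta_jg\|_{L^\infty}d\tau+C\int_0^t\|\nabla_h\tilde u\|_{L^\infty}\sum_k2^{-|j-k|}\|\dot\Delta_kf\|_{L^\infty}d\tau$, to be closed by Gr\"onwall in the weighted quantity $\sup_j2^{|j-q|}\|\dot\Delta_jf\|_{L^\infty}$. The kernel inequality itself is believable (for a divergence-free Lipschitz field the Bony decomposition does give it), but the Gr\"onwall step fails: since $\sum_k2^{-|j-k|}2^{-|k-q|}\approx(1+|j-q|)2^{-|j-q|}$, the kernel operator is \emph{unbounded} on that weighted space, so Gr\"onwall's lemma does not apply. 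If instead you run the Picard iteration, the polynomial losses compound: the $n$-th iterate carries roughly $\binom{|j-q|}{n}2^{-|j-q|}$, and summing the series $\sum_n\frac{U(t)^n}{n!}\binom{|j-q|}{n}$ produces a factor of order $e^{c\sqrt{U(t)|j-q|}}$, i.e.\ at best a bound $C_\delta e^{C_\delta U(t)}2^{-(1-\delta)|j-q|}$ and \emph{not} the stated $C_0e^{C_0U(t)}2^{-|j-q|}$. Your sentence that the spurious factor ``must be absorbed into the exponential $e^{C_0U}$ \dots so that [it] does not degrade the geometric decay'' is precisely the step that is false as stated and is nowhere justified; no absorption mechanism exists at fixed decay rate. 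There is also a misattribution that signals the missing tool: Proposition \ref{proposition} is the $[\dot\Delta_j,x_i]$ commutator lemma, not a transport estimate; the transport machinery of the paper is Proposition \ref{Commutateur} and Corollary \ref{saoussen}.

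The paper's proof avoids iterating any off-diagonal kernel, and this explains the exact form of $U(t)$. It proves \emph{two one-sided scalar estimates}: applying Corollary \ref{saoussen} at the endpoint regularities to the system \eqref{VH}, with Bony estimates on the sources $\mp\tilde\omega_{z,q,n}\tilde u_{2/1}$, it gets $\|\tilde\omega_{q,n}(t)\|_{\dot B^{-1}_{\infty,\infty}}\lesssim2^{-q}\bigl(\|\dot\Delta_q\omega^0_n\|_{L^\infty}+c_q\|\omega^0_{z,n}\|_{\dot B^0_{2,1}}\bigr)e^{CU(t)}$ and $\|\tilde\omega_{q,n}(t)\|_{\dot B^{1}_{\infty,1}}\lesssim2^{q}\bigl(\|\dot\Delta_q\omega^0_n\|_{L^\infty}+c_q\|\omega^0_{z,n}\|_{\dot B^0_{2,1}}\bigr)e^{CU(t)}$. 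Each of these is a Gr\"onwall inequality for a \emph{single scalar} quantity, so no kernel iteration ever occurs; by the definition of these norms the two bounds read $\|\dot\Delta_j\tilde\omega_{q,n}\|_{L^\infty}\lesssim2^{j-q}(\cdots)$ and $\lesssim2^{q-j}(\cdots)$, and their minimum is the claimed $2^{-|j-q|}$. The price of working at the endpoints $s=\pm1$ is that the commutator estimates (Proposition \ref{Commutateur}, cases (i) and (iii)) require $\nabla\tilde u\in\dot B^0_{\infty,1}$, not merely $\nabla_h\tilde u\in L^\infty$; this is exactly why $U(t)=\|\tilde u\|_{L^1_t(\dot B^1_{\infty,1})}+\|u\|_{L^1_t(\dot B^1_{2,1})}$ rather than $\int_0^t\|\nabla\tilde u\|_{L^\infty}d\tau$. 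Your claim that only $\nabla_h\tilde u\in L^\infty$ is needed is thus incompatible with the statement you are proving; to repair your argument you would have to either prove the two endpoint Besov estimates as the paper does, or accept a strictly weaker off-diagonal rate and redo the downstream summation in Proposition \ref{p}.
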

\begin{proof}
We will localize in vertical frequency the initial data and denote by $\tilde \omega_{n}$   the unique global vector-valued solution of the problem 
$$
\left\{\begin{array}{l}
\displaystyle\partial_t \tilde\omega_{1,n} +(\tilde u\cdot\nabla_h)\tilde\omega_{1,n} 
 =-\tilde\omega_{z,n}\tilde u_2,\\
\displaystyle\partial_t \tilde\omega_{2,n} +(\tilde u\cdot\nabla_h)\tilde\omega_{2,n} 
 =\tilde\omega_{z,n}\tilde u_1,\\
\displaystyle\partial_t\tilde\omega_{z,n} +(\tilde u\cdot\nabla_h)\tilde\omega_{z,n} =0,\\
\tilde \omega_{n|t=0}=\tilde \omega_{n}(0)
\end{array} 
\right.
$$
where 
$$
\tilde \omega_{n}(0)=\begin{pmatrix}-y\omega^0_{n,z}\\ 
x\omega^0_{n,z}\\ 
\omega^0_{n,z}\\ 
\end{pmatrix},
\qquad
\widetilde{u}=\left(\begin{array}{c}u_1+yu_3 \\u_2-xu_3\end{array}\right),
\qquad
\nabla_h=(\partial_x,\partial_y)
$$
and
$$
\omega^0_{n,z}
=\frac{1}{2\pi}\int_{-\pi}^{\pi}\omega^0_{z}e^{-inz}dz
=\partial_xu^0_{n,2}-\partial_yu^0_{n,1}.
$$
By Proposition \ref{Fourier}, we deduce that $\tilde\omega_{n}$ is the Fourier coefficients of $\omega,$ i.e, $\tilde\omega_{n}=\omega_{n}.$ Thus
$$
\omega=\sum_{n\in\ZZ}\omega_{n}e^{inz}
\quad\mbox{and}\quad
\|\omega\|_{\mathscr{\dot B}^s_{p,r}}
=
\sum_{n\in\ZZ}\|\omega _{n}\|_{\dot B^s_{p,r}}.
$$
We will use for this purpose a new approach similar to  \cite{hmidi}, which consists to  linearize properly the Fourier  of transport equation. For that, we will localize in frequency the initial data and denote by $\tilde \omega_q$   the unique global vector-valued solution of the problem 
$$
\left\{\begin{array}{l}
\displaystyle\partial_t \tilde\omega_{1,q,n} +(\tilde u\cdot\nabla_h)\tilde\omega_{1,q,n} 
 =-\tilde\omega_{z,q,n}\tilde u_2,\\
\displaystyle\partial_t \tilde\omega_{2,q,n} +(\tilde u\cdot\nabla_h)\tilde\omega_{2,q,n} 
 =\tilde\omega_{z,q,n}\tilde u_1,\\
\displaystyle\partial_t\tilde\omega_{z,q,n} +(\tilde u\cdot\nabla_h)\tilde\omega_{z,q,n} =0,\\
\tilde \omega_{q,n|t=0}=\tilde \omega_{q,n}(0)
\end{array} 
\right.
$$
where 
$$
\tilde \omega_{q,n}(0)=\begin{pmatrix}-y\dot \Delta_q \omega^0_{n,z}\\ 
x\dot \Delta_q \omega^0_{n,z}\\ 
\dot \Delta_q \omega^0_{n,z}\\ 
\end{pmatrix},
\qquad
\widetilde{u}=\left(\begin{array}{c}u_1+yu_3 \\u_2-xu_3\end{array}\right),
\qquad
\nabla_h=(\partial_x,\partial_y)
$$
and
$$
\dot \Delta_q \omega^0_{n,z}
=\frac{1}{2\pi}\int_{-\pi}^{\pi}\dot \Delta_q \omega^0_{z}e^{-inz}dz
=\partial_x\dot\Delta_qu^0_{n,2}-\partial_y\dot\Delta_qu^0_{n,1}.
$$
In addition by linearity and uniqueness
\begin{equation*}
\omega=\sum_{(q,n)\in\ZZ^2}\tilde\omega _{q,n}e^{inz}.
\end{equation*}
Since $
\dv\,\tilde\omega_{q,n}(0)=-y\partial_x(\dot\Delta_q \omega^0_{z,n})+
x\partial_y(\dot\Delta_q \omega^0_{z,n})
=\partial_\theta(\dot\Delta_q\omega^0_{z,n})=0$ and $\tilde\omega_q(0)=r\dot \Delta_q \omega^0_ze_{\theta}
+\dot \Delta_q \omega^0_z e_z,$ then applying Proposition \ref{mahdi} gives $\tilde \omega_{q,n}=r\tilde\omega_{q,z,n} e_{\theta}+\tilde \omega_{q,z,n}e_z$ and
\begin{equation}
\left\{ \begin{array}{lll}
\partial_t \tilde \omega_{q,n}+(u.\nabla)\tilde \omega_{q,n}=
\tilde \omega_{q,n,z}(u_re_{\theta}-u_{\theta}e_r)\\
\tilde \omega_{q,n|t=0}=\tilde \omega_{q,n}(0).
\end{array} \right.
\end{equation}
Applying the maximum principle and using Propositions \ref{henda}, \ref{proposition}, we obtain
\begin{equation}\label{KL}
\begin{aligned}
||\tilde \omega_{q,n}(t)||_{L^{\infty}}
&\lesssim 
\|\tilde \omega_{q,n}(0)\|_{L^\infty}+t\|\tilde \omega_{q,n,z}\|_{L^\infty_t(L^\infty)}
\|u\|_{L^\infty_t(L^\infty)}
\\&
\lesssim
\bigl(||\dot \Delta _q \omega^0_n||_{L^{\infty}}+c_q\|\omega^0_{z,n}\|_{\dot B^0_{2,1}}\bigr)e^{C_0t}
\\&
\lesssim
2^q\bigl(||\dot \Delta _q \omega^0_n||_{L^2}+c_q\|\omega^0_{z,n}\|_{\dot B^0_{1,1}}\bigr)e^{C_0t}.
\end{aligned}
\end{equation}
This complete the proof of i)-ii) of the proposition.\\
Let us now move to the proof of iii) which is the main property of the above decomposition.
Remark first that the desired estimate is equivalent to
\begin{equation}\label{4.5}
||\dot \Delta_j \tilde \omega_{q,n}(t)||_{L^{\infty}}
\leq 
C2^{j-q}e^{CU(t)}\bigl(||\dot \Delta_q \omega^0_n||_{L^{\infty}}
+c_q\|\omega^0_{z,n}\|_{\dot B^0_{2,1}}\bigr)
\end{equation}
and
\begin{equation}\label{4.6}
||\dot \Delta_j \tilde \omega_{q,n}(t)||_{L^{\infty}}
\leq 
C2^{q-j}e^{CU(t)}\bigl(||\dot \Delta_q \omega^0_n||_{L^{\infty}}
+c_q\|\omega^0_{z,n}\|_{\dot B^0_{2,1}}\bigr),
\end{equation}
with $c_q\in\ell^1(\ZZ).$
From Corollary \ref{C1} , it is plain that the $\tilde{\omega}_{q,n}$ is solution of
\begin{equation}\label{VH}
\left\{\begin{array}{l}
\displaystyle\partial_t \tilde\omega_{1,q,n} +(\tilde u\cdot\nabla_h)\tilde\omega_{1,q,n} 
 =\tilde\omega_{2,q,n}u_3-\tilde\omega_{z,q,n}u_2,\\
\displaystyle\partial_t \tilde\omega_{2,q,n} +(\tilde u\cdot\nabla_h)\tilde\omega_{2,q,n} 
 =\tilde\omega_{z,q,n} u_1-\tilde\omega_{1,q,n}u_3,\\
\displaystyle\partial_t\tilde\omega_{z,q,n} +(\tilde u\cdot\nabla_h)\tilde\omega_{z,q,n} =0,
\end{array}
\right.
\end{equation}
with
$$
\tilde u=(u_1+yu_3,u_2-xu_3)\quad\mbox{and}\quad
\partial_x\bigl(u_1+yu_3\bigr)+\partial_y\bigl(u_2-xu_3\bigr)=0.
$$
\subsubsection*{Step 1: \, Proof of (\ref{4.5}) }\  
Applying Corollary \ref{saoussen} to \eqref{VH}
\begin{equation}\label{4.7}
e^{-C\|\tilde u\|_{L^1_t(\dot B^1_{\infty,1})}}
||\tilde \omega_{q,n}(t)||_{\dot B_{\infty,\infty}^{-1}}
\lesssim 
||\tilde\omega_{q,n}(0)||_{\dot B_{\infty,\infty}^{-1}}
+\int_0^t e^{-C\|\tilde u\|_{L^1_{\tau}(\dot B^1_{\infty,1})}}
||\tilde \omega_{i,q,n} u_j(\tau)||_{\dot B_{\infty,\infty}^{-1}}d\tau.
\end{equation}
To estimate the integral term we write in view of Bony's decomposition
\begin{eqnarray}
||\tilde \omega_{i,q,n}u_j||_{\dot B_{\infty,\infty}^{-1}}
&\leq& 
||T_{\tilde \omega_{i,q,n}} u_j||_{\dot B_{\infty,\infty}^{-1}}
+||T_{u_j}\tilde \omega_{i,q,n}||_{\dot B_{\infty,\infty}^{-1}}
+||R(\tilde \omega_{i,q,n},u_j)||_{\dot B_{\infty,\infty}^{-1}}
\nonumber\\
&\lesssim& 
||u||_{L^{\infty}}||\tilde \omega_{z,q,n}||_{\dot B_{\infty,\infty}^{-1}}
+||R(\tilde \omega_{i,q,n},u_j)||_{\dot B_{\infty,\infty}^{-1}}.
\nonumber
\end{eqnarray}
The remainder term can be treated as follows
\begin{eqnarray}
||R(\tilde \omega_{i,q,n},u_j)||_{\dot B_{\infty,\infty}^{-1}}
&\lesssim&
\sup_k\sum_{\ell\geq k-3}||\dot\Delta_{\ell} \tilde \omega_{i,q,n}||_{L^{\infty}}
||\tilde{\dot\Delta}_{\ell}u_j||_{L^2}\nonumber\\
&\lesssim& ||\tilde \omega_{q,n}||_{\dot B_{\infty,\infty}^{-1}}||u||_{\dot B_{2,1}^{1}}.\nonumber
\end{eqnarray}
It follows that
$$
||\tilde \omega_{i,q,n}u_j||_{\dot B_{\infty,\infty}^{-1}}
\lesssim 
||u||_{\dot B_{2,1}^{1}}||\tilde \omega_{q,n}||_{\dot B_{\infty,\infty}^{-1}}.
$$
Inserting this estimate into (\ref{4.7}) we get
$$
e^{-C\|\tilde u\|_{L^1_t(\dot B^1_{\infty,1})}}
||\tilde \omega_{q,n}(t)||_{\dot B_{\infty,\infty}^{-1}}
\lesssim 
||\tilde\omega_{q,n}(0)||_{\dot B_{\infty,\infty}^{-1}}
+\int_0^t||u(\tau)||_{\dot B_{2,\infty}^{1}}
e^{-C\|\tilde u\|_{L^1_{\tau}(\dot B^1_{\infty,1})}}
||\tilde \omega_{q,n}(\tau)||_{\dot B_{\infty,\infty}^{-1}}d\tau.
$$
Hence we obtain by Gronwall's inequality and unsung Proposition \ref{proposition}
\begin{equation}\label{LT}
\begin{aligned}
||\tilde \omega_{q,n}(t)||_{\dot B_{\infty,1}^{-1}}
&\leq
C(||\dot\Delta_q \omega^0_n||_{\dot B_{\infty,\infty}^{-1}}
+||\tilde{h}^1_q\ast\omega^0_{z,n}||_{B_{\infty,\infty}^{-1}}
+||\tilde{h}^2_q\ast\omega^0_{z,n}||_{B_{\infty,\infty}^{-1}})
\\&
\times
e^{C\|\tilde u\|_{L^1_{t}(\dot B^1_{\infty,1})}+C\|u\|_{L^1_t(\dot B^1_{2,1})}}
\\& 
\le
C2^{-q}\bigl(||\dot \Delta_q \omega^0_n||_{L^{\infty}}
+c_q\|\omega^0_{z,n}\|_{\dot B^0_{2,1}}\bigr)e^{CU(t)}.
\end{aligned}
\end{equation}
This gives by definition
$$
||\dot\Delta_j \tilde \omega_q(t)||_{L^{\infty}}
\leq 
C2^{j-q}\bigl(||\dot \Delta_q \omega^0_n||_{L^{\infty}}
+c_q\|\omega^0_{z,n}\|_{\dot B^0_{2,1}}\bigr)e^{CU(t)}.
$$
\subsubsection*{Step 2: \, Proof of (\ref{4.6}) }\ 
 The solution $\tilde \omega_q$ has three components in the cartesian basis $\tilde \omega_{q,n}=(\tilde \omega_{1,q,n},\tilde \omega_{2,q,n},\tilde \omega_{z,q,n}).$
It's clear that $\tilde \omega_{1,q,n}$ is solution of
$$
\left\{ \begin{array}{lll}
\partial_t \tilde \omega_{1,q,n}+(\tilde u\cdot\nabla_h)\tilde \omega_{1,q,n}
=-\tilde \omega_{z,q,n}\tilde u_2\\
\tilde \omega_{1,q,n|t=0}=\tilde\omega_{1,q,n}(0).
\end{array}\right.
$$
Then, we obtain from Corollary \ref{saoussen}
$$
e^{-C\|\tilde u\|_{L^1_t(\dot B^1_{\infty,1})}}||\tilde\omega_{1,q,n}(t)||_{\dot B_{\infty,1}^1}
\lesssim 
||\tilde \omega_{1,q,n}(0)||_{\dot B_{\infty,1}^1}+\int_0^t 
e^{-C\|\tilde u\|_{L^1_{\tau}(\dot B^1_{\infty,1})}}
||\tilde\omega_{z,q,n}\tilde u_2||_{\dot B_{\infty,1}^1}d\tau.
$$
From Bony's decomposition, we get 
$$
\begin{aligned}
e^{-C\|\tilde u\|_{L^1_t(\dot B^1_{\infty,1})}}||\tilde\omega_{1,q,n}(t)||_{\dot B_{\infty,1}^1}
\lesssim 
||\tilde \omega_{1,q,n}(0)||_{\dot B_{\infty,1}^1}
&+\int_0^t  e^{-C\|\tilde u\|_{L^1_{\tau}(\dot B^1_{\infty,1})}}
||\tilde\omega_{z,q,n}||_{\dot B_{\infty,1}^1}||\tilde u||_{L^{\infty}}d\tau
\\&+\int_0^t  e^{-C\|\tilde u\|_{L^1_{\tau}(\dot B^1_{\infty,1})}}||\tilde u||_{\dot B_{\infty,1}^1}
||\tilde\omega_{z,q,n}||_{L^{\infty}}d\tau.
\end{aligned}
$$ 
The analysis will be exactly the same for $\tilde \omega_{2,q,n}$,  because it satisfies the following equation
$$
\left\{ \begin{array}{lll}
\partial_t \tilde \omega_{2,q,n}+(\tilde u\cdot\nabla_h)\tilde \omega_{2,q,n}=
\tilde\omega_{z,q,n}\tilde u_1\\
\tilde \omega_{2,q,n|t=0}=\tilde \omega_{2,q,n}(0).
\end{array}\right.
$$
Since  $\tilde \omega_{z,q,n}$ satisfies
$$
\left\{ \begin{array}{lll}
\partial_t \tilde \omega_{z,q,n}+(\tilde u\cdot\nabla_h)\tilde \omega_{z,q,n}=0\\
\tilde \omega_{z,q,n|t=0}=\dot\Delta_q \omega_{z,n}^0,
\end{array}\right.
$$
then
$$
e^{-C\|\tilde u\|_{L^1_t(\dot B^1_{\infty,1})}}||\tilde \omega_{z,q,n}(t)||_{\dot B_{\infty,1}^1}
\lesssim 
||\dot\Delta_q \omega_{z,n}^0||_{\dot B_{\infty,1}^1}
$$
and
$$
||\tilde \omega_{z,q,n}(t)||_{L^\infty}
\le
||\dot\Delta_q \omega_{z,n}^0||_{L^\infty}
\lesssim
2^q||\dot\Delta_q \omega_{z,n}^0||_{L^2}.
$$
Finally we obtain
$$
\begin{aligned}
e^{-C\|\tilde u\|_{L^1_t(\dot B^1_{\infty,1})}}||\tilde \omega_{q,n}(t)||_{\dot B_{\infty,1}^1}
&\lesssim 
||\dot\Delta_q\omega^0_n||_{\dot B_{\infty,1}^1}
+||\tilde{h}^1_q\ast\omega^0_{z,n}||_{\dot B_{\infty,1}^1}
+||\tilde{h}^2_q\ast\omega^0_{z,n}||_{\dot B_{\infty,1}^1}
\\&
+\int_0^t e^{-C\|\tilde u\|_{L^1_{\tau}(\dot B^1_{\infty,1})}}
||\tilde\omega_{z,q,n}||_{\dot B_{\infty,1}^1}
||\tilde u||_{L^\infty}d\tau
\\&
+\int_0^t  e^{-C\|\tilde u\|_{L^1_{\tau}(\dot B^1_{\infty,1})}}
||\tilde u||_{\dot B_{\infty,1}^1}||\tilde\omega_{z,q,n}||_{L^{\infty}}d\tau.
\end{aligned}
$$
So according to Gronwall's inequality and using Propositions \ref{henda} and \ref{proposition} (see Appendix), we obtain
\begin{equation}\label{TL}
||\tilde \omega_{q,n} (t)||_{\dot B_{\infty,1}^1}
\leq 
C2^q \bigl(||\dot \Delta_q \omega^0||_{L^{\infty}}
+c_q\|\omega^0_{z,n}\|_{\dot B^0_{2,1}}\bigr)e^{CU(t)}.
\end{equation}
This can be written
$$
||\dot \Delta _j\tilde \omega_{q,n}(t)||_{L^{\infty}}
\leq C2^{q-j}\bigl(||\dot \Delta_q \omega^0||_{L^{\infty}}
+c_q\|\omega^0_{z,n}\|_{\dot B^0_{2,1}}\bigr)e^{CU(t)}.
$$
Hence,  the desired result.
\end{proof}
So that $x\tilde\omega_{q,n}$ and $y\tilde\omega_{q,n}$ satisfies
$$
\begin{aligned}
\partial_t(x\tilde\omega_{q,n})+(\tilde u\cdot\nabla_h)(x\tilde\omega_{q,n})
&=\tilde u_1\tilde\omega_{q,n}
+\left(\begin{array}{c}-\tilde u_2\tilde\omega_{2,q,n} \\ \tilde u_1\tilde\omega_{2,q,n} 
\\0\end{array}\right)
\end{aligned}
$$
and
$$
\begin{aligned}
\partial_t(y\tilde\omega_{q,n})+(\tilde u\cdot\nabla_h)(y\tilde\omega_{q,n})
&=\tilde u_2\tilde\omega_{q,n}
+\left(\begin{array}{c}\tilde u_2\tilde\omega_{1,q,n} \\ -\tilde u_1\tilde\omega_{1,q,n}
 \\0\end{array}\right).
 \end{aligned}
$$
We follow the same proof of the previous proposition and using 
Corollary \ref{CM}, we obtain the following proposition. 
\begin{proposition}\label{PC}
There exists $C_0$ is a constant depending on $u^0$ and $c_q\in\ell^1(\ZZ),$ such that.\\
i) For every $(q,n)\in \ZZ^2,$ we have
$$
||(x,y)\tilde \omega_{q,n}(t)||_{L^{\infty}}
\le
C_0\bigl(||\dot \Delta _q\{(1,x,y)\omega^0_n\}||_{L^{\infty}}
+c_q\|\omega^0_{n}\|_{\dot B^0_{2,1}}
+c_q\|\omega^0_{z,n}\|_{\dot B^0_{1,1}}\bigr)e^{C_0t}.
$$ 
ii) For every $(j,q,n)\in \ZZ^3,$ we have
$$
\begin{aligned}
||\dot \Delta_j\{(x,y)\tilde \omega_{q,n}\}(t)||_{L^{\infty}}
&\leq 
C_02^{-|j-q|}e^{C_0U(t)}\exp(C_0e^{C_0t})
\\&
\times
\Bigl(||\dot \Delta_q\{(1,x,y)\omega^0_n\}||_{L^{\infty}}+||\dot \Delta _q \omega^0_n||_{L^2}
+c_q\|\omega^0_{n}\|_{\dot B^0_{2,1}}+c_q\|\omega^0_{z,n}\|_{\dot B^0_{1,1}}
\Bigr).
\end{aligned}
$$
with $U(t):=||\tilde u||_{L_t^1(\dot B_{\infty,1}^1)}+||u||_{L_t^1(\dot B_{2,1}^1)}.$
\end{proposition}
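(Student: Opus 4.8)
The plan is to transpose the proof of Proposition \ref{lipsh} to the weighted quantities $x\tilde\omega_{q,n}$ and $y\tilde\omega_{q,n}$, whose transport equations are displayed just above the statement. Two new features appear. First, the right-hand sides now contain, besides the cross terms, the full vector $\tilde u_1\tilde\omega_{q,n}$ (resp.\ $\tilde u_2\tilde\omega_{q,n}$) produced by $\tilde u\cdot\nabla_h x=\tilde u_1$ (resp.\ $\tilde u\cdot\nabla_h y=\tilde u_2$); crucially, using $\tilde\omega_{2,q,n}=x\tilde\omega_{z,q,n}$ and $\tilde\omega_{1,q,n}=-y\tilde\omega_{z,q,n}$ one checks that \emph{every} source term involves only the \emph{unweighted} solution $\tilde\omega_{q,n}$, which is already estimated in Proposition \ref{lipsh} and in \eqref{TL}. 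Second, the weight $x$ (or $y$) is unbounded and does not commute with the blocks $\dot\Delta_j$, so the passage between $\dot\Delta_j\{(x,y)\tilde\omega_{q,n}\}$ and $(x,y)\dot\Delta_j\tilde\omega_{q,n}$ generates weight--block commutators that are handled by Corollary \ref{CM} and Proposition \ref{proposition}.

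For i) I apply the maximum principle to each weighted equation, using that $\tilde u\cdot\nabla_h$ is horizontally divergence free (Corollary \ref{C1}), to get
$$
\|(x,y)\tilde\omega_{q,n}(t)\|_{L^{\infty}}\lesssim
\|(x,y)\tilde\omega_{q,n}(0)\|_{L^{\infty}}
+\int_0^t\|\tilde u(\tau)\|_{L^{\infty}}\,\|\tilde\omega_{q,n}(\tau)\|_{L^{\infty}}\,d\tau.
$$
The integrand is bounded by combining $\|\tilde u\|_{L^{\infty}}\lesssim\|u\|_{L^{\infty}}+\|(x,y)u\|_{L^{\infty}}\lesssim e^{C_0t}$ from Proposition \ref{henda} with the single-exponential bound of Proposition \ref{lipsh} ii). For the datum, the identities $\omega^0_1=-y\omega^0_z$, $\omega^0_2=x\omega^0_z$ rewrite $(x,y)\tilde\omega_{q,n}(0)$ as dyadic blocks of $(1,x,y)\omega^0_n$ up to weight--block commutators, which Corollary \ref{CM} absorbs into the summable terms $c_q\|\omega^0_n\|_{\dot B^0_{2,1}}+c_q\|\omega^0_{z,n}\|_{\dot B^0_{1,1}}$. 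Gronwall then closes i).

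For ii) I reproduce the two-step scheme of Proposition \ref{lipsh} iii). In Step 1 I estimate $(x,y)\tilde\omega_{q,n}$ in $\dot B^{-1}_{\infty,\infty}$ via Corollary \ref{saoussen}; since the sources carry only the unweighted solution, Bony's decomposition bounds them by $\|\tilde u\|_{\dot B^1_{2,1}}\|\tilde\omega_{q,n}\|_{\dot B^{-1}_{\infty,\infty}}+\|\tilde u\|_{L^{\infty}}\|\tilde\omega_{q,n}\|_{\dot B^{-1}_{\infty,\infty}}$, quantities already controlled by \eqref{LT}, which yields the $2^{j-q}$ half of $2^{-|j-q|}$. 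In Step 2 I estimate in $\dot B^1_{\infty,1}$; Bony's decomposition of $\tilde u\,\tilde\omega_{q,n}$ produces $\|\tilde u\|_{\dot B^1_{\infty,1}}\|\tilde\omega_{q,n}\|_{L^{\infty}}+\|\tilde u\|_{L^{\infty}}\|\tilde\omega_{q,n}\|_{\dot B^1_{\infty,1}}$, fed from Proposition \ref{lipsh} ii) and \eqref{TL}, while the data are handled as in i); this gives the $2^{q-j}$ half. Summing the two halves gives $2^{-|j-q|}$, and the extra $\|\dot\Delta_q\omega^0_n\|_{L^2}$ in ii) records the $L^2$ content generated by the Bernstein steps and the weight commutators.

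The main obstacle will be Step 2 and, within it, the weight--block commutators. Corollary \ref{CM} is the weighted analogue of Corollary \ref{saoussen}: besides the $e^{C_0U(t)}$ prefactor coming from the transport commutator $[\dot\Delta_j,\tilde u\cdot\nabla_h]$, its a priori bound contains a factor $\exp\!\big(C_0\int_0^t\|\tilde u\|_{L^{\infty}}\,d\tau\big)$ reflecting the growth of the weight $x$ (resp.\ $y$) along the characteristics of $\tilde u\cdot\nabla_h$; since $\|\tilde u\|_{L^{\infty}}\lesssim e^{C_0t}$ by Proposition \ref{henda}, integrating in time produces exactly the additional $\exp(C_0e^{C_0t})$ of the statement. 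Keeping all commutator corrections summable in $q$ against the $\ell^1$ sequence $c_q$ while preserving the sharp localization $2^{-|j-q|}$ is where the real work lies; the estimate for $y\tilde\omega_{q,n}$ is obtained identically after exchanging the roles of $(x,\tilde u_1,\tilde\omega_{2,q,n})$ and $(y,\tilde u_2,\tilde\omega_{1,q,n})$.
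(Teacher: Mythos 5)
Your part i) and the $\dot B^1_{\infty,1}$ step of part ii) follow the paper's argument, but the other half of ii) --- the $\dot B^{-1}_{\infty,\infty}$ estimate giving the factor $2^{j-q}$ --- contains a genuine gap. You claim that, because the source terms $\tilde u_1\tilde\omega_{q,n}$, $\tilde u_2\tilde\omega_{2,q,n}$, \dots\ ``carry only the unweighted solution,'' they can be treated as pure forcing and bounded by
$\bigl(\|\tilde u\|_{L^\infty}+\|\tilde u\|_{\dot B^1_{2,1}}\bigr)\|\tilde\omega_{q,n}\|_{\dot B^{-1}_{\infty,\infty}}$,
with everything already controlled. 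The catch is the factor $\|\tilde u\|_{\dot B^1_{2,1}}$: it is unavoidable in this negative-regularity estimate (the Bony remainder $R(\tilde\omega_{q,n},\tilde u_i)$ in $\dot B^{-1}_{\infty,\infty}$ needs an $L^2$-based norm of the multiplying velocity to recover summability over low output frequencies, exactly as in Step~1 of Proposition \ref{lipsh}), but $\tilde u=(u_1+yu_3,\,u_2-xu_3)$ is a \emph{weighted} velocity, and $\|\tilde u\|_{\dot B^1_{2,1}}$ is controlled nowhere: it is not part of $U(t)$ as defined in the statement, and the weighted vorticity is only propagated in $L^\infty$-based spaces ($\mathscr{\dot B}^{0}_{\infty,1}$), never in $\mathscr{\dot B}^{0}_{2,1}$, so no analogue of Remark \ref{LLT} or Lemma \ref{RF} produces it. This is precisely why the paper, at this step, rewrites the weighted equations a second time, e.g.
$$
\partial_t(x\tilde\omega_{q,n})+(\tilde u\cdot\nabla_h)(x\tilde\omega_{q,n})
=u_1\tilde\omega_{q,n}+u_3(y\tilde\omega_{q,n})
+\left(\begin{array}{c}-u_2\tilde\omega_{2,q,n}+u_3(x\tilde\omega_{2,q,n}) \\
u_1\tilde\omega_{2,q,n}+u_3(y\tilde\omega_{2,q,n}) \\ 0\end{array}\right),
$$
so that the unbounded weights sit on the \emph{solution} and the coefficients are the plain components $u_i$, for which $\|u\|_{L^\infty}\lesssim e^{C_0t}$ (Proposition \ref{henda}) and $\|u\|_{\dot B^1_{2,1}}\lesssim\|\omega\|_{\dot B^0_{2,1}}$ (Remark \ref{LLT}) are available. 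The price is that the source now involves $\|(1,x,y)\tilde\omega_{q,n}\|_{\dot B^{-1}_{\infty,\infty}}$ itself, so one must run Gronwall on the whole weighted family.

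This also corrects your accounting of the double exponential. Corollary \ref{CM} is a purely static weight--block commutation estimate (no time, no characteristics, no exponential factor); it only serves to convert the quadratic-weight data $xy\dot\Delta_q\omega^0_{z,n}$, $x^2\dot\Delta_q\omega^0_{z,n}$, $y^2\dot\Delta_q\omega^0_{z,n}$ into the stated right-hand side with summable $c_q$. The factor $\exp(C_0e^{C_0t})$ arises from the Gronwall step just described: the exponent is $\|u\|_{L^1_t(L^\infty)}+\|u\|_{L^1_t(\dot B^1_{2,1})}$, and since $\|u(\tau)\|_{L^\infty}\lesssim e^{C_0\tau}$, its time integral is itself exponential in $t$. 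Without the paper's rewriting of the sources (or an alternative control of weighted quantities in $L^2$-based Besov spaces, which would require strengthening the hypotheses of Theorem \ref{globale}), your Step~1 cannot be closed.
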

\begin{proof}
i) According to  maximum principle, Propositions  \ref{henda}, \ref{lipsh}, \ref{proposition} and Corollary \ref{CM}, we have 
$$
\begin{aligned}
||(x,y)\tilde \omega_{q,n}(t)||_{L^{\infty}}
&\lesssim
\|(x,y)\dot\Delta_q\omega^0_{z,n}\|_{L^\infty}+
\|xy\dot\Delta_q\omega^0_{z,n}\|_{L^\infty}+
\|x^2\dot\Delta_q\omega^0_{z,n}\|_{L^\infty}+
\|y^2\dot\Delta_q\omega^0_{z,n}\|_{L^\infty}
\\&
+\int_0^t\|\tilde u\|_{L^\infty}\|\tilde\omega_{q,n}\|_{L^\infty}d\tau
\\&
\lesssim 
\bigl(||\dot \Delta _q\{(1,x,y)\omega^0_n\}||_{L^{\infty}}
+c_q\|\omega^0_{n}\|_{\dot B^0_{2,1}}
+c_q\|\omega^0_{z,n}\|_{\dot B^0_{1,1}}\bigr)e^{C_0t}.
\end{aligned}
$$ 
ii) Corollary \ref{saoussen}, implies that
$$
\begin{aligned}
e^{-C\|\tilde u\|_{L^1_t(\dot B^1_{\infty,1})}}
&||(x,y)\tilde \omega_{q,n}(t)||_{\dot B^1_{\infty,1}}
\lesssim
\|(x,y)\dot\Delta_q\omega^0_{z,n}\|_{\dot B^1_{\infty,1}}+
\|xy\dot\Delta_q\omega^0_{z,n}\|_{\dot B^1_{\infty,1}}+
\|x^2\dot\Delta_q\omega^0_{z,n}\|_{\dot B^1_{\infty,1}}
\\&
+\|y^2\dot\Delta_q\omega^0_{z,n}\|_{\dot B^1_{\infty,1}}
+\int_0^te^{-C\|\tilde u\|_{L^1_{\tau}(\dot B^1_{\infty,1})}}\|\tilde u\|_{L^\infty}\|\tilde\omega_{q,n}\|_{\dot B^1_{\infty,1}}d\tau
\\&
+\int_0^te^{-C\|\tilde u\|_{L^1_{\tau}(\dot B^1_{\infty,1})}}\|\tilde\omega_{q,n}\|_{L^\infty}\|\tilde u\|_{\dot B^1_{\infty,1}}d\tau,
\end{aligned}
$$ 
this along with Propositions  \ref{henda}, \ref{lipsh}, \ref{proposition}, Corollary \ref{CM}
and inequalities \eqref{KL}, \eqref{TL}, ensures that
$$
\begin{aligned}
e^{-C\|\tilde u\|_{L^1_{t}(\dot B^1_{\infty,1})}}||(x,y)\tilde \omega_{q,n}(t)||_{\dot B^1_{\infty,1}}
&\lesssim 
2^q\bigl(||\dot \Delta _q\{(1,x,y)\omega^0_n\}||_{L^{\infty}}
+c_q\|\omega^0_{n}\|_{\dot B^0_{2,1}}
+c_q\|\omega^0_{z,n}\|_{\dot B^0_{1,1}}\bigr)
\\&
+2^q \bigl(||\dot \Delta_q \omega^0_n||_{L^{\infty}}
+c_q\|\omega^0_{z,n}\|_{\dot B^0_{2,1}}\bigr)e^{C_0t+C\|u\|_{L^1_t(\dot B^1_{2,1})}}
\\&
+2^q\bigl(||\dot \Delta _q \omega^0_n||_{L^2}+c_q\|\omega^0_{n}\|_{\dot B^0_{2,1}}
+c_q\|\omega^0_{n}\|_{\dot B^0_{1,1}}\bigr)e^{C_0t}.
\end{aligned}
$$ 
As a consequence, we obtain
$$
\begin{aligned}
||\dot \Delta_j\{(x,y)\tilde \omega_{q,n}\}(t)||_{L^{\infty}}
&\leq 
C_02^{q-j}e^{C_0(t+U(t))}
\\&
\times
\Bigl(||\dot \Delta_q\{(1,x,y)\omega^0_n\}||_{L^{\infty}}
+c_q\|\omega^0_{n}\|_{\dot B^0_{2,1}}+c_q\|\omega^0_{z,n}\|_{\dot B^0_{1,1}}
+||\dot \Delta _q \omega^0_n||_{L^2}\Bigr).
\end{aligned}
$$
To prove the  estimate
$$
\begin{aligned}
||\dot \Delta_j\{(x,y)\tilde \omega_{q,n}\}(t)||_{L^{\infty}}
&\leq 
C_02^{j-q}e^{C_0(t+U(t))}
\\&
\times
\Bigl(||\dot \Delta_q\{(1,x,y)\omega^0_n\}||_{L^{\infty}}
+c_q\|\omega^0_{z,n}\|_{\dot B^0_{2,1}}+c_q\|\omega^0_{z,n}\|_{\dot B^0_{1,1}}
+||\dot \Delta _q \omega^0_n||_{L^2}\Bigr),
\end{aligned}
$$
we use the fact that
$$
\begin{aligned}
\partial_t(x\tilde\omega_{q,n})+(\tilde u\cdot\nabla_h)(x\tilde\omega_{q,n})
=u_1\tilde\omega_{q,n}+u_3(y\tilde\omega_{q,n})+
\left(\begin{array}{c}-u_2\tilde\omega_{2,q,n}+u_3(x\tilde\omega_{2,q,n}) \\ 
u_1\tilde\omega_{2,q,n}+u_3(y\tilde\omega_{2,q,n}) 
\\0\end{array}\right)
\end{aligned}
$$
and
$$
\begin{aligned}
\partial_t(y\tilde\omega_{q,n})+(\tilde u\cdot\nabla_h)(y\tilde\omega_{q,n})
 =
 u_2\tilde\omega_{q,n}-u_3(x\tilde\omega_{q,n})
+\left(\begin{array}{c}u_2\tilde\omega_{1,q,n}-u_3(x\tilde\omega_{1,q,n}) 
\\ -u_1\tilde\omega_{1,q,n}-u_3(y\tilde\omega_{1,q,n})
 \\0
 \end{array}\right).
 \end{aligned}
$$
This along with Corollary \ref{saoussen} leads to
$$
\begin{aligned}
e^{-C\|\tilde u\|_{L^1_t(\dot B^1_{\infty,1})}}
&||(x,y)\tilde \omega_{q,n}(t)||_{\dot B^{-1}_{\infty,\infty}}
\lesssim
\|(x,y)\dot\Delta_q\omega^0_{z,n}\|_{\dot B^{-1}_{\infty,\infty}}+
\|xy\dot\Delta_q\omega^0_{z,n}\|_{\dot B^{-1}_{\infty,\infty}}+
\|x^2\dot\Delta_q\omega^0_{z,n}\|_{\dot B^{-1}_{\infty,\infty}}
\\&
+\|y^2\dot\Delta_q\omega^0_{z,n}\|_{\dot B^{-1}_{\infty,\infty}}
+\int_0^te^{-C\|\tilde u\|_{L^1_{\tau}(\dot B^1_{\infty,1})}}\|u\|_{L^\infty}
\|(1,x,y)\tilde\omega_{q,n}\|_{\dot B^{-1}_{\infty,\infty}}d\tau
\\&
+\int_0^te^{-C\|\tilde u\|_{L^1_{\tau}(\dot B^1_{\infty,1})}}
\|(1,x,y)\tilde\omega_{q,n}\|_{\dot B^{-1}_{\infty,\infty}}\|u\|_{\dot B^{1}_{2,1}}d\tau.
\end{aligned}
$$ 
Applying Gronwall's inequality, yields
$$
\begin{aligned}
e^{-C\|\tilde u\|_{L^1_t(\dot B^1_{\infty,1})}}
&||(x,y)\tilde \omega_{q,n}(t)||_{\dot B^{-1}_{\infty,\infty}}
\lesssim
\Bigl\{\|(x,y)\dot\Delta_q\omega^0_{z,n}\|_{\dot B^{-1}_{\infty,\infty}}+
\|xy\dot\Delta_q\omega^0_{z,n}\|_{\dot B^{-1}_{\infty,\infty}}+
\|x^2\dot\Delta_q\omega^0_{z,n}\|_{\dot B^{-1}_{\infty,\infty}}
\\&
+\|y^2\dot\Delta_q\omega^0_{z,n}\|_{\dot B^{-1}_{\infty,\infty}}
+\int_0^te^{-C\|\tilde u\|_{L^1_{\tau}(\dot B^1_{\infty,1})}}\|u\|_{L^\infty}
\|\tilde\omega_{q,n}\|_{\dot B^{-1}_{\infty,\infty}}d\tau
\\&
+\int_0^te^{-C\|\tilde u\|_{L^1_{\tau}(\dot B^1_{\infty,1})}}
\|\tilde\omega_{q,n}\|_{\dot B^{-1}_{\infty,\infty}}\|u\|_{\dot B^{1}_{2,1}}d\tau\Bigr\}
e^{\|u\|_{L^1_t(L^\infty)}+\|u\|_{L^1_t(\dot B^1_{2,1})}}.
\end{aligned}
$$ 
This along with Proposition \ref{proposition}, Corollary \ref{CM}
and inequality \eqref{LT}, ensures that
$$
\begin{aligned}
e^{-C\|\tilde u\|_{L^1_t(\dot B^1_{\infty,1})}}
||(x,y)\tilde \omega_{q,n}(t)||_{\dot B^{-1}_{\infty,\infty}}
&\le
C_02^{-q}e^{C_0U(t)}\exp(C_0e^{C_0t})
\\&
\times
\Bigl\{\|\dot\Delta_q(1,x,y)\omega^0_{n}\|_{L^\infty}+
c_q(\|\omega^0_n\|_{\dot B^0_{2,1}}+\|\omega^0_{z,n}\|_{\dot B^0_{1,1}})\Bigr\}.
\end{aligned}
$$ 
This completes the proof of Proposition \ref{PC}.
\end{proof}
For conclude remnant  to controlled $\tilde \omega_{q,n}$ in $\dot B^1_{2,1}$ and
$\dot B^{-1}_{2,\infty}$ (see Remark \ref{LLT}).
\begin{proposition}\label{RD}
There exists $C_0$ is a constant depending on $u^0$ and $c_q\in\ell^1(\ZZ),$ such that.\\
i) For every $(q,n)\in \ZZ^2,$ we have
$$
||\tilde \omega_{q,n}(t)||_{L^{2}}+||(1,x,y)\tilde \omega_{q,n}(t)||_{L^{\infty}}
\le
C_0\bigl(||\dot \Delta _q\omega^0_n||_{L^2}
+c_q\|\omega^0_{z,n}\|_{\dot B^0_{1,1}}\bigr)e^{C_0t}.
$$ 
ii) For every $(j,q,n)\in \ZZ^3,$ we have
$$
\begin{aligned}
||\dot \Delta_j\tilde \omega_{q,n}(t)||_{L^2}
&\leq 
C_02^{-|j-q|}e^{C_0U(t)}\exp(C_0e^{C_0t})
\Bigl(||\dot \Delta _q \omega^0_n||_{L^2}
+c_q\|\omega^0_{z,n}\|_{\dot B^0_{1,1}}\Bigr).
\end{aligned}
$$
with $U(t):=||\tilde u||_{L_t^1(\dot B_{\infty,1}^1)}+||u||_{L_t^1(\dot B_{2,1}^1)}.$
\end{proposition}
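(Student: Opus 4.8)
The plan is to recognise Proposition~\ref{RD} as the $L^2$-based counterpart of Propositions~\ref{lipsh} and~\ref{PC}, and to reproduce their proofs after replacing the first integrability index $\infty$ by $2$ throughout; concretely, the two one-sided bounds of part ii) will be obtained in $\dot B^{-1}_{2,\infty}$ and $\dot B^1_{2,1}$ (the spaces announced just before the statement and already used in Remark~\ref{LLT}) in place of $\dot B^{-1}_{\infty,\infty}$ and $\dot B^1_{\infty,1}$, while part i) will rely on a plain $L^2$ energy estimate instead of the $L^\infty$ maximum principle. Combining the two bounds of part ii) through the Besov characterisation of Remark~\ref{rmk1.1}(3) turns the factor $2^{j-q}$ coming from $\dot B^{-1}_{2,\infty}$ and the factor $2^{q-j}$ coming from $\dot B^1_{2,1}$ into the announced $2^{-|j-q|}$.

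For part i) I would start from the fact that the vertical component $\tilde\omega_{z,q,n}$ solves the pure transport equation $\partial_t\tilde\omega_{z,q,n}+(\tilde u\cdot\nabla_h)\tilde\omega_{z,q,n}=0$ with $\partial_x\tilde u_1+\partial_y\tilde u_2=0$ (Corollary~\ref{C1}), so that every $L^p$ norm is conserved and in particular $\|\tilde\omega_{z,q,n}(t)\|_{L^2}=\|\dot\Delta_q\omega^0_{z,n}\|_{L^2}\le\|\dot\Delta_q\omega^0_n\|_{L^2}$. Using the compact form $\partial_t\tilde\omega_{q,n}+(u\cdot\nabla)\tilde\omega_{q,n}=\tilde\omega_{z,q,n}(u_re_\theta-u_\theta e_r)$ of Proposition~\ref{mahdi} together with the incompressibility of $u$, an $L^2$ energy estimate gives $\frac{d}{dt}\|\tilde\omega_{q,n}\|_{L^2}\lesssim\|u\|_{L^\infty}\|\tilde\omega_{z,q,n}\|_{L^2}$; since the right-hand factor is constant in time there is no Gronwall feedback, and one merely integrates $\|u(\tau)\|_{L^\infty}\le C_0e^{C_0\tau}$ from Proposition~\ref{henda}, which produces only the single exponential $e^{C_0t}$. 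The weighted initial datum $\tilde\omega_{q,n}(0)$ is rewritten by the commutator/kernel decomposition of Proposition~\ref{proposition} to yield $\|\dot\Delta_q\omega^0_n\|_{L^2}+c_q\|\omega^0_{z,n}\|_{\dot B^0_{1,1}}$, and the accompanying term $\|(1,x,y)\tilde\omega_{q,n}\|_{L^\infty}$ is nothing but the combination of Propositions~\ref{lipsh}(i) and~\ref{PC}(i), so no new work is needed there.

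For part ii) I would run the two steps of Proposition~\ref{lipsh} in the $L^2$ scale. For the $2^{j-q}$ bound I would apply the transport estimate of Corollary~\ref{saoussen} to the system~\eqref{VH} in $\dot B^{-1}_{2,\infty}$, expand each source by Bony's decomposition, and bound it by the relevant velocity norm times $\|\tilde\omega_{q,n}\|_{\dot B^{-1}_{2,\infty}}$, so that Gronwall and Proposition~\ref{proposition} give $\|\tilde\omega_{q,n}(t)\|_{\dot B^{-1}_{2,\infty}}\le C2^{-q}(\cdots)$. For the $2^{q-j}$ bound I would work in $\dot B^1_{2,1}$ component by component: $\tilde\omega_{z,q,n}$ being purely transported, $\|\tilde\omega_{z,q,n}(t)\|_{\dot B^1_{2,1}}\lesssim e^{C\|\tilde u\|_{L^1_t(\dot B^1_{\infty,1})}}\|\dot\Delta_q\omega^0_{z,n}\|_{\dot B^1_{2,1}}$, while the two remaining components obey transport equations whose sources $-\tilde\omega_{z,q,n}\tilde u_2$ and $\tilde\omega_{z,q,n}\tilde u_1$ are controlled by Bony's decomposition through $\|\tilde\omega_{z,q,n}\|_{\dot B^1_{2,1}}\|\tilde u\|_{L^\infty}+\|\tilde\omega_{z,q,n}\|_{L^2}\|\tilde u\|_{\dot B^1_{\infty,1}}$; Gronwall together with Propositions~\ref{henda} and~\ref{proposition} then closes the estimate. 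The extra factor $\exp(C_0e^{C_0t})$ appears exactly as in Proposition~\ref{PC}: the $r$-weight built into $\tilde\omega_{q,n}=(-y\tilde\omega_{z,q,n},x\tilde\omega_{z,q,n},\tilde\omega_{z,q,n})$ couples the Cartesian components through the weighted velocity $\tilde u$, whose $L^\infty$ norm grows like $e^{C_0t}$ by Proposition~\ref{henda}, so that the Gronwall integral $\int_0^t\|\tilde u(\tau)\|_{L^\infty}d\tau\lesssim e^{C_0t}$ is exponentiated once more.

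The hard part is the product (Bony) estimate in the negative-regularity $L^2$-based space $\dot B^{-1}_{2,\infty}$: one must distribute the $L^2$ and $L^\infty$ integrability across the two factors of the sources so that the paraproducts and, above all, the remainder land in $\dot B^{-1}_{2,\infty}$ while only $\|\tilde\omega_{q,n}\|_{\dot B^{-1}_{2,\infty}}$ and the velocity norms entering $U(t)$ survive, which is what makes Gronwall closeable. This is more delicate than in Proposition~\ref{lipsh}, since the target is now $L^2$ rather than $L^\infty$, so the Bernstein gain that absorbed the low-frequency factor there is no longer available and the summation in the remainder has to be reorganised accordingly. The remaining bookkeeping — the single and double weights generated by $\tilde u$ and by the weighted initial data, and the extraction of the $\ell^1(\ZZ)$ sequences $c_q$ — is dispatched by Proposition~\ref{proposition} and Corollary~\ref{CM}, exactly as in the proofs of Propositions~\ref{lipsh} and~\ref{PC}.
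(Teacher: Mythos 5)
Your proposal follows essentially the same route as the paper's own proof: part i) via an $L^2$ energy estimate in which the conserved, purely transported component $\tilde\omega_{z,q,n}$ decouples the source (so only the velocity bound $\|\tilde u\|_{L^\infty}\le C_0e^{C_0t}$ of Proposition \ref{henda} is integrated, with Proposition \ref{proposition} handling the weighted initial data), and part ii) via the transport estimates of Corollary \ref{saoussen} in $\dot B^1_{2,1}$ and $\dot B^{-1}_{2,\infty}$, with Bony's decomposition splitting the sources as $\|\tilde\omega_{z,q,n}\|_{\dot B^1_{2,1}}\|\tilde u\|_{L^\infty}+\|\tilde\omega_{z,q,n}\|_{L^2}\|\tilde u\|_{\dot B^1_{\infty,1}}$, followed by Gronwall and Propositions \ref{henda}, \ref{proposition}, the two one-sided bounds then merging into $2^{-|j-q|}$. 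The only cosmetic deviation is that in part i) you work with the compact three-dimensional equation of Proposition \ref{mahdi} where the paper uses the two-dimensional system \eqref{VH}; the computation is identical in substance.
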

\begin{proof}
It follows from \eqref{VH} that
$$
\left\{ \begin{array}{lll}
\partial_t \tilde \omega_{1,q,n}+(\tilde u\cdot\nabla_h)\tilde \omega_{1,q,n}
=-\tilde \omega_{z,q,n}\tilde u_2\\
\partial_t \tilde \omega_{2,q,n}+(\tilde u\cdot\nabla_h)\tilde \omega_{2,q,n}
=\tilde \omega_{z,q,n}\tilde u_1\\
\partial_t \tilde \omega_{z,q,n}+(\tilde u\cdot\nabla_h)\tilde \omega_{z,q,n}
=0\\
\tilde \omega_{1,q,n|t=0}=\tilde\omega_{q,n}(0).
\end{array}\right.
$$
Taking $L^2$ inner product of the above system with $\tilde\omega_{q,n}$ gives
$$
\frac{1}{2}\frac{d}{dt}\|\tilde\omega_{q,n}\|_{L^2}^2
\le
\|\tilde\omega_{q,n}\|_{L^2}\|\tilde\omega_{z,q,n}\|_{L^2}\|\tilde u\|_{L^\infty}
\le
\|\dot\Delta_q\omega_{z,n}^0\|_{L^2}\|\tilde\omega_{q,n}\|_{L^2}\|\tilde u\|_{L^\infty}.
$$
Applying Gronwall's inequality and using Propositions \ref{henda}, \ref{proposition} gives rise to
$$
||\tilde \omega_{q,n}(t)||_{L^{2}}
\le
C_0\bigl(||\dot \Delta _q\omega^0_n||_{L^2}
+c_q\|\omega^0_{z,n}\|_{\dot B^0_{1,1}}\bigr)e^{C_0t}.
$$ 
Thanks to Corollary \ref{saoussen}, we obtain 
$$
\begin{aligned}
e^{-C\|\tilde u\|_{L^1_t(\dot B^1_{2,1})}}||\tilde\omega_{q,n}(t)||_{\dot B_{2,1}^1}
&\lesssim 
||\tilde \omega_{q,n}(0)||_{\dot B_{2,1}^1}+\int_0^t 
e^{-C\|\tilde u\|_{L^1_{\tau}(\dot B^1_{\infty,1})}}
||\tilde\omega_{z,q,n}\tilde u||_{\dot B_{2,1}^1}d\tau
\\&
\lesssim
||\tilde \omega_{q,n}(0)||_{\dot B_{2,1}^1}+\int_0^t 
e^{-C\|\tilde u\|_{L^1_{\tau}(\dot B^1_{\infty,1})}}
||\tilde\omega_{z,q,n}||_{\dot B_{2,1}^1}\|\tilde u||_{L^\infty}d\tau
\\&
+\int_0^t 
e^{-C\|\tilde u\|_{L^1_{\tau}(\dot B^1_{\infty,1})}}
||\tilde\omega_{z,q,n}||_{L^2}\|\tilde u||_{\dot B_{\infty,1}^1}d\tau.
\end{aligned}
$$
As $\tilde\omega_{z,q,n}$ satisfies
$$
e^{-C\|\tilde u\|_{L^1_{\tau}(\dot B^1_{\infty,1})}}
||\tilde\omega_{z,q,n}||_{\dot B_{2,1}^1}
\le
\|\dot\Delta_q\omega_{z,n}^0\|_{\dot B^1_{2,1}}
\lesssim
2^q\|\dot\Delta_q\omega^0_{z,n}\|_{L^2}
$$
and
$$
||\tilde\omega_{z,q,n}||_{L^2}
\le
||\dot\Delta_q\omega_{z,n}^0||_{L^2}
\lesssim
2^qc_q\|\omega_{z,n}^0\|_{\dot B^0_{1,1}}.
$$
Then from Propositions \ref{proposition} and \ref{henda} we find that for $(q,n)\in\ZZ^2$
$$
||\tilde\omega_{q,n}(t)||_{\dot B_{2,1}^1}
\le
C_02^q\bigl(||\dot \Delta _q\omega^0_n||_{L^2}
+c_q\|\omega^0_{z,n}\|_{\dot B^0_{1,1}}\bigr)
e^{C_0(t+\|\tilde u\|_{L^1_{\tau}(\dot B^1_{\infty,1})})}.
$$ 
By a similar proof of the previous inequality, we deduce
$$
||\tilde\omega_{q,n}(t)||_{\dot B_{2,\infty}^{-1}}
\le
C_02^{-q}\bigl(||\dot \Delta _q\omega^0_n||_{L^2}
+c_q\|\omega^0_{z,n}\|_{\dot B^0_{1,1}}\bigr)
e^{C(\|\tilde u\|_{L^1_{\tau}(\dot B^1_{2,1})}+\|\tilde u\|_{L^1_{\tau}(\dot B^1_{\infty,1})})}
\exp(C_0e^{c_0t}).
$$ 
This completes the proof.
\end{proof}
So in conclusion, we obtain the following corollary.
\begin{corollary}\label{NB}
For every $(j,q,n)\in \ZZ^3,$ we have
$$
\begin{aligned}
||\tilde \omega_{q,n}(t)||_{L^{2}}+||\tilde \omega_{q,n}(t)||_{L^{\infty}}
\le
C_0\bigl(||\dot \Delta _q\{(1,x,y)\omega^0_n\}||_{L^{\infty}}
+c_q\|\omega^0_{n}\|_{\dot B^0_{2,1}}
+c_q\|\omega^0_{z,n}\|_{\dot B^0_{1,1}}\bigr)e^{C_0t}
\end{aligned}
$$ 
and
$$
\begin{aligned}
||\dot \Delta_j\tilde \omega_{q,n}(t)||_{L^2}
&+||\dot \Delta_j\{(1,x,y)\tilde \omega_{q,n}\}(t)||_{L^{\infty}}
\leq 
C_02^{-|j-q|}e^{C_0U(t)}\exp(C_0e^{C_0t})
\\&
\times
\Bigl(||\dot \Delta_q\{(1,x,y)\omega^0_n\}||_{L^{\infty}}+||\dot \Delta _q \omega^0_n||_{L^2}
+c_q\|\omega^0_{n}\|_{\dot B^0_{2,1}}+c_q\|\omega^0_{z,n}\|_{\dot B^0_{1,1}}
\Bigr),
\end{aligned}
$$
with $U(t):=||\tilde u||_{L_t^1(\dot B_{\infty,1}^1)}+||u||_{L_t^1(\dot B_{2,1}^1)}$
and $c_q\in\ell^1(\ZZ).$
\end{corollary}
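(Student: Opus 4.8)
The plan is to obtain Corollary \ref{NB} by simply \emph{collecting} the bounds already established in Propositions \ref{lipsh}, \ref{PC} and \ref{RD}; no new estimate on the transported pieces $\tilde\omega_{q,n}$ is required. The only task is to add the relevant inequalities and reconcile the several data norms appearing on their right-hand sides into the single unified form stated in the corollary. Throughout I will freely relabel the $\ell^1(\ZZ)$-sequences $c_q$ (which change from line to line but always satisfy $\|c_q\|_{\ell^1}\lesssim 1$) and use the two elementary reductions: the defining identity $\|\dot\Delta_q\omega^0_n\|_{L^2}=c_q\|\omega^0_n\|_{\dot B^0_{2,1}}$ with $c_q=\|\dot\Delta_q\omega^0_n\|_{L^2}/\|\omega^0_n\|_{\dot B^0_{2,1}}$, together with the trivial component-wise bounds $\|\dot\Delta_q\omega^0_n\|_{L^\infty}\le\|\dot\Delta_q\{(1,x,y)\omega^0_n\}\|_{L^\infty}$ and $\|\omega^0_{z,n}\|_{\dot B^0_{2,1}}\le\|\omega^0_n\|_{\dot B^0_{2,1}}$, the last holding since $\omega^0_{z,n}$ is a single component of the vector $\omega^0_n$.

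For the first inequality, the $L^2$ bound on $\tilde\omega_{q,n}(t)$ is exactly Proposition \ref{RD} i), while the $L^\infty$ bound is Proposition \ref{lipsh} ii). Adding the two and applying the reductions above turns $\|\dot\Delta_q\omega^0_n\|_{L^2}$ into $c_q\|\omega^0_n\|_{\dot B^0_{2,1}}$, $\|\dot\Delta_q\omega^0_n\|_{L^\infty}$ into $\|\dot\Delta_q\{(1,x,y)\omega^0_n\}\|_{L^\infty}$, and the stray $c_q\|\omega^0_{z,n}\|_{\dot B^0_{2,1}}$ coming from Proposition \ref{lipsh} into $c_q\|\omega^0_n\|_{\dot B^0_{2,1}}$, so that every term lands in the admissible list $\{\,\|\dot\Delta_q\{(1,x,y)\omega^0_n\}\|_{L^\infty},\ c_q\|\omega^0_n\|_{\dot B^0_{2,1}},\ c_q\|\omega^0_{z,n}\|_{\dot B^0_{1,1}}\,\}$, with the common time factor $e^{C_0t}$.

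For the second inequality I would split the left-hand side as $\|\dot\Delta_j\tilde\omega_{q,n}(t)\|_{L^2}+\|\dot\Delta_j\tilde\omega_{q,n}(t)\|_{L^\infty}+\|\dot\Delta_j\{(x,y)\tilde\omega_{q,n}\}(t)\|_{L^\infty}$ and feed each summand to the appropriate proposition: the $L^2$ term to Proposition \ref{RD} ii), the plain $L^\infty$ term to Proposition \ref{lipsh} iii), and the weighted $L^\infty$ term to Proposition \ref{PC} ii). Each comes with a prefactor of the shape $C_0\,2^{-|j-q|}e^{C_0U(t)}$, the last two carrying in addition the factor $\exp(C_0e^{C_0t})$; taking the largest of these (namely $C_0\,2^{-|j-q|}e^{C_0U(t)}\exp(C_0e^{C_0t})$) as a common prefactor and applying once more the reductions of the first paragraph collapses the three right-hand sides into the single bracket $\bigl(\|\dot\Delta_q\{(1,x,y)\omega^0_n\}\|_{L^\infty}+\|\dot\Delta_q\omega^0_n\|_{L^2}+c_q\|\omega^0_n\|_{\dot B^0_{2,1}}+c_q\|\omega^0_{z,n}\|_{\dot B^0_{1,1}}\bigr)$, which is exactly the claimed form.

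There is no genuine analytic obstacle here, since the substantive work was done in the three propositions invoked. The only point demanding care, and thus the one I would treat most explicitly, is the bookkeeping of the right-hand sides: one must check that the $\dot B^0_{2,1}$-norm of $\omega^0_{z,n}$ produced by Proposition \ref{lipsh} introduces no quantity outside the admissible list (it does not, by $\|\omega^0_{z,n}\|_{\dot B^0_{2,1}}\le\|\omega^0_n\|_{\dot B^0_{2,1}}$), and that the several occurrences of $c_q$ can be merged into one $\ell^1(\ZZ)$ sequence without disturbing the summability needed later when one sums over $(j,q,n)$ to recover the $\mathscr{\dot B}^0_{p,r}$ norms.
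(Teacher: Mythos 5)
Your proposal is correct and matches the paper's own treatment: the paper states Corollary \ref{NB} as an immediate consequence (``So in conclusion...'') of Propositions \ref{lipsh}, \ref{PC} and \ref{RD}, exactly the collection-and-bookkeeping argument you carry out. Your explicit reductions (absorbing $\|\dot\Delta_q\omega^0_n\|_{L^2}$ into $c_q\|\omega^0_n\|_{\dot B^0_{2,1}}$, bounding $\|\dot\Delta_q\omega^0_n\|_{L^\infty}$ by $\|\dot\Delta_q\{(1,x,y)\omega^0_n\}\|_{L^\infty}$ and $\|\omega^0_{z,n}\|_{\dot B^0_{2,1}}$ by $\|\omega^0_n\|_{\dot B^0_{2,1}}$, and taking the largest common prefactor) are precisely the implicit steps the paper leaves to the reader.
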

\begin{proposition}\label{p}
The solution of ${\rm(E)}$ with initial data $(1,x,y)u^0\in L^2$, such that 
$\omega^0\in\mathscr{\dot B}^{0}_{2,1},$ 
$(1,x,y)\omega^0\in\mathscr{\dot B}^{0}_{\infty,1}$ and
$\omega^0_z\in\mathscr{\dot B}^{0}_{1,1}$ satisfies for every $t\in\RR_+$
$$
\|(1,x,y)u(t)\|_{L^2}\le C_0e^{C_0t}
$$
and
$$
||\omega_z(t)||_{\mathscr{\dot B}^{0}_{1,1}}+
||\omega(t)||_{\mathscr{\dot B}^{0}_{2,1}}
+||(1,x,y)\omega(t)||_{\mathscr{\dot B}^{0}_{\infty,1}}\leq C_0 e^{exp(e^{C_0t})}
$$
where $C_0$ depends on the norms of $u^0.$
\end{proposition}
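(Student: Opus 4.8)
The plan is to convert the per-block bounds on the building blocks $\tilde\omega_{q,n}$ collected in Corollary \ref{NB} (and Propositions \ref{lipsh}, \ref{PC}, \ref{RD}) into Besov bounds on the full vorticity via the summation $\omega=\sum_{(q,n)\in\ZZ^2}\tilde\omega_{q,n}e^{inz}$, and then to close a Gronwall inequality for the quantity $U(t)=\|\tilde u\|_{L^1_t(\dot B^1_{\infty,1})}+\|u\|_{L^1_t(\dot B^1_{2,1})}$ that drives all the estimates. First I would dispose of the $L^2$ bounds: energy conservation for ${\rm(E)}$ gives $\|u(t)\|_{L^2}=\|u^0\|_{L^2}$, while the weighted bounds $\|xu(t)\|_{L^2}+\|yu(t)\|_{L^2}\le C_0e^{C_0t}$ are precisely \eqref{VS} of Proposition \ref{henda}; together these yield $\|(1,x,y)u(t)\|_{L^2}\le C_0e^{C_0t}$.

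The heart of the argument is the estimate on $\Phi(t):=\|\omega_z(t)\|_{\mathscr{\dot B}^{0}_{1,1}}+\|\omega(t)\|_{\mathscr{\dot B}^{0}_{2,1}}+\|(1,x,y)\omega(t)\|_{\mathscr{\dot B}^{0}_{\infty,1}}$. The naive bound $\|\dot\Delta_j\omega_n\|_{L^\infty}\le\sum_q\|\dot\Delta_j\tilde\omega_{q,n}\|_{L^\infty}$ fed with the off-diagonal estimate $2^{-|j-q|}e^{C_0U(t)}$ of Corollary \ref{NB} only produces a factor $e^{C_0U(t)}$, which would close the loop for short time only. The decisive refinement — and the step I expect to be the main obstacle — is to interpolate this off-diagonal decay against the a priori single-exponential-in-$t$ bound $\|\tilde\omega_{q,n}(t)\|_{L^\infty}\lesssim\bigl(\|\dot\Delta_q\omega^0_n\|_{L^\infty}+c_q\|\omega^0_{z,n}\|_{\dot B^0_{2,1}}\bigr)e^{C_0t}$ coming from part ii) of the same propositions. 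Concretely, for fixed $q$ I split $\sum_j\|\dot\Delta_j\tilde\omega_{q,n}\|_{L^\infty}$ at $|j-q|\le N$, bounding the $\le N$ near-diagonal terms by the $t$-exponential estimate (cost $\sim N\,e^{C_0t}$) and the far terms by $2^{-N}e^{C_0U(t)}$; optimizing $N\sim U(t)$ turns the exponential factor $e^{C_0U(t)}$ into the \emph{linear} factor $1+U(t)$. Summing over $q$ and $n$ then converts the $\dot\Delta_q$-pieces and the $\ell^1(\ZZ)$-sequences $c_q$ into the Besov norms of the data, and yields
$$
\Phi(t)\le C_0\,\Phi(0)\,(1+U(t))\,e^{C_0t}\exp(C_0e^{C_0t}),
$$
where the last double-exponential factor is inherited from the weighted blocks of Propositions \ref{PC} and \ref{RD} (there $\|u\|_{L^1_t(L^\infty)}\lesssim e^{C_0t}$ enters an exponential).

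Finally I would close the nonlinear loop. By Lemma \ref{LP}, Remark \ref{LLT} and Lemma \ref{RF}, the velocity-gradient norms are controlled by the vorticity, $\|\tilde u(t)\|_{\dot B^1_{\infty,1}}+\|u(t)\|_{\dot B^1_{2,1}}\lesssim\Phi(t)$ up to the already-bounded $L^2$ terms, whence $U'(t)\lesssim\Phi(t)$. Inserting the previous display gives the linear differential inequality $U'(t)\le C_0(1+U(t))\,e^{C_0t}\exp(C_0e^{C_0t})$, and since $\int_0^t e^{C_0s}\exp(C_0e^{C_0s})\,ds\lesssim\exp(C_0e^{C_0t})$, Gronwall's lemma integrates this to $U(t)\le\exp(\exp(C_0e^{C_0t}))$ up to constants. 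Feeding this back into the bound for $\Phi(t)$ produces the claimed triple-exponential estimate $\Phi(t)\le C_0\,e^{\exp(e^{C_0t})}$, and combined with the $L^2$ bounds this is exactly the statement. I emphasize that the logarithmic interpolation above is where the helicoidal structure is essential: the conservation of every $\|\omega_z\|_{L^p}$ and the resulting single-exponential a priori bounds of Proposition \ref{henda} are what make the dependence on $U$ linear rather than exponential, and hence make the solution global rather than merely local.
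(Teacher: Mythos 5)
Your proposal is correct and follows essentially the same route as the paper's own proof: the near/far splitting of $\sum_j\|\dot\Delta_j\tilde\omega_{q,n}\|$ at $|j-q|=N$ combined with the per-block bounds of Corollary \ref{NB}, the choice $N\approx U(t)$ turning $e^{C_0U(t)}$ into a factor linear in $U(t)$ (this is exactly the paper's $f_n+g_n$ decomposition \eqref{ilyes}--\eqref{asma}), and then the Gronwall closure using $U(t)\lesssim\int_0^t\bigl(\|u\|_{L^2}+\|\omega\|_{\mathscr{\dot B}^{0}_{2,1}}+\|(1,x,y)\omega\|_{\mathscr{\dot B}^{0}_{\infty,1}}\bigr)d\tau$ from Lemmas \ref{LP}, \ref{RF} and Remark \ref{LLT}. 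The one minor deviation is that you fold $\|\omega_z\|_{\mathscr{\dot B}^{0}_{1,1}}$ into $\Phi$ and run it through the same splitting, whereas the available off-diagonal block estimates are $L^\infty$- and $L^2$-based rather than $L^1$-based; this term should instead be bounded at the very end, as the paper does, by applying the transport estimate of Corollary \ref{saoussen} to $\partial_t\omega_z+(\tilde u\cdot\nabla_h)\omega_z=0$ once $U(t)$ is already controlled.
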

\begin{proof}
Inequality \eqref{VS} implies the first estimate.
 Note that for any fixed integer $N,$ one has 
\begin{eqnarray}\label{ilyes}
||\omega_n(t)||_{\dot B_{2,1}^0}&+&||(1,x,y)\omega_n(t)||_{\dot B_{\infty,1}^0}
\leq
\sum_j||\dot \Delta_j\sum_q(1,x,y)\tilde \omega_{q,n}(t)||_{L^{\infty}}
+\sum_j||\dot \Delta_j\sum_q\tilde \omega_{q,n}(t)||_{L^{2}}
\nonumber\\
&\leq& \sum_{|j-q|\geq N}||\dot \Delta_j\{(1,x,y)\tilde \omega_{q,n}\}(t)||_{L^{\infty}}
+\sum_{|j-q|\geq N}||\dot \Delta_j\tilde \omega_{q,n}(t)||_{L^{2}}\nonumber\\
&+& \sum_{|j-q|<N}||\dot \Delta_j\{(1,x,y)\tilde \omega_{q,n}\}(t)||_{L^{\infty}}
+\sum_{|j-q|<N}||\dot \Delta_j\tilde \omega_{q,n}(t)||_{L^{2}}\nonumber\\
&:=&f_n+g_n
\end{eqnarray}
Applying Corollary \ref{NB} gives
\begin{eqnarray}\label{skander}
f_n&\le&
C_02^{-N}e^{C_0U(t)}\exp(C_0e^{C_0t})
\Bigl(||(1,x,y)\omega^0_n||_{\dot B^0_{\infty,1}}
+\|\omega^0_{n}\|_{\dot B^0_{2,1}}+\|\omega^0_{z,n}\|_{\dot B^0_{1,1}}
\Bigr)
\end{eqnarray}
and
\begin{equation}\label{asma}
g_n\le
C_0N\bigl(||(1,x,y)\omega^0_n||_{\dot B^0_{\infty,1}}
+\|\omega^0_{n}\|_{\dot B^0_{2,1}}
+\|\omega^0_{z,n}\|_{\dot B^0_{1,1}}\bigr)e^{C_0t}.
\end{equation}
Combining this estimate with \eqref{ilyes}, \eqref{skander} and \eqref{asma}, we obtain
$$
\begin{aligned}
||\omega_n(t)||_{\dot B_{2,1}^0}&+||(1,x,y)\omega_n(t)||_{\dot B_{\infty,1}^0}
\lesssim 
N\bigl(||(1,x,y)\omega^0_n||_{\dot B^0_{\infty,1}}
+\|\omega^0_{n}\|_{\dot B^0_{2,1}}
+\|\omega^0_{z,n}\|_{\dot B^0_{1,1}}\bigr)e^{C_0t}
\\&
+2^{-N}e^{C_0U(t)}\exp(C_0e^{C_0t})
\Bigl(||(1,x,y)\omega^0_n||_{\dot B^0_{\infty,1}}
+\|\omega^0_{n}\|_{\dot B^0_{2,1}}+\|\omega^0_{z,n}\|_{\dot B^0_{1,1}}
\Bigr).
\end{aligned}
$$
Choosing the integer $N$ so that  
$$
N\approx U(t),
$$
leads to
$$
\begin{aligned}
||\omega_n(t)||_{\dot B_{2,1}^0}+||(1,x,y)\omega_n(t)||_{\dot B_{\infty,1}^0}
&\lesssim 
U(t)\exp(C_0e^{C_0t})
\\&
\times\Bigl(||(1,x,y)\omega^0_n||_{\dot B^0_{\infty,1}}
+\|\omega^0_{n}\|_{\dot B^0_{2,1}}+\|\omega^0_{z,n}\|_{\dot B^0_{1,1}}
\Bigr).
\end{aligned}
$$
Lemmas \ref{LP}, \ref{RF} and Remark \ref{LLT}
on the other hand, we have
$$
\begin{aligned}
\sum_{n\in\ZZ}||\omega_n(t)||_{\dot B_{2,1}^0}
&+\sum_{n\in\ZZ}||(1,x,y)\omega_n(t)||_{\dot B_{\infty,1}^0}
\lesssim
\bigl(||(1,x,y)\omega^0||_{\mathscr{\dot B}^0_{\infty,1}}
+\|\omega^0\|_{\mathscr{\dot B}^0_{2,1}}
+\|\omega^0_{z}\|_{\mathscr{\dot B}^0_{1,1}}
\bigr) 
\\&
\times
\int_0^t\bigl(\|u\|_{L^2(\RR^2\times]-\pi,\pi[)}+
\|(1,x,y)\omega\|_{\mathscr{\dot B}^{0}_{\infty,1}}
+\|\omega\|_{\mathscr{\dot B}^{0}_{2,1}}\bigr)d\tau
\exp(C_0e^{C_0t}),
\end{aligned}
$$
hence we obtain by Gronwall's inequality
$$
||\omega(t)||_{\mathscr{\dot B}^{0}_{2,1}}
+||(1,x,y)\omega(t)||_{\mathscr{\dot B}^{0}_{\infty,1}}\leq C_0 e^{exp(e^{C_0t})}.
$$
As $\omega_z$ verifies 
$$
\partial_t\omega_z+(\tilde u\cdot\nabla_h)\omega_z=0,
$$
we obtain by Corollary \ref{saoussen}
$$
\|\omega_z\||_{\mathscr{\dot B}^{0}_{1,1}}\leq C_0 e^{exp(e^{C_0t})}.
$$
This finishes the proof.
\end{proof}
\subsection{Existence and uniqueness}
The proof of existence of a solution is performed in a standard manner.
We begin by solving an approximate problem, we are going to use Friedrich's  method, which consists to approximation of  system ${\rm(E)}$ by a truncation in the space of the frequencies. Let us define then the operator
$$
J_{\ell,k}u=\sum_{|n|\le k}e^{inz}1_{r\le\ell}\mathcal{F}^vu(n).
$$
Let us consider the  approximate equation
$$
\partial_tu_{\ell,k}+J_{\ell,k}\bigl(J_{\ell,k}u_{\ell,k}\cdot\nabla)J_{\ell,k}u_{\ell,k}
=J_{\ell,k}\mathcal{Q}(J_{\ell,k}u_{\ell,k},u_{\ell,k})
$$
with
$$
\mathcal{Q}(u,v)=\sum_{i,j}\partial_i\partial_j(-\Delta)^{-1}(u^iu^j).
$$
Later we prove that the solutions are uniformly
bounded. The last step consists in studying the convergence to a solution of the
initial equation. So we prove the local existence for regular data
(for more details see \cite{rd,  dutrifoy}).
In critical spaces one can follow Park's approach
in \cite{park}. To prove that the solution associated to all helicoidal and enough smooth initial data $u^0$, is helicoidal, it suffices to use a method due to  X. Saint Raymond \cite{Saint}. In fact, it's clear that the first condition of  Definition \ref{he} is satisfied. Concerning the second condition, we have
$$
\partial_t\{u(re_\theta+ke_z)\}+(u\cdot\nabla)\{u(re_\theta+ke_z)\}
=\{(u\cdot\nabla)(re_\theta+ke_z)\}u-\nabla\Pi(re_\theta+ke_z)
$$
i.e;
$$
\partial_t\{u(re_\theta+ke_z)\}+(u\cdot\nabla)\{u(re_\theta+ke_z)\}
=0.
$$ 
To prove the uniqueness simply to controlled the difference of two solutions in
$L^2(\RR^2\times]-\pi,\pi[).$
\section*{Appendix}
To prove main theorem, we need some inequalities. 
\begin{proposition}\label{Commutateur}
Let $(r,p)\in[1, +\infty]^2,$ $f\in\dot B^s_{p,r},$ $v$ be a smooth divergence free vector field and 
$$
[v\cdot\nabla,\dot\Delta_q]f=(v\cdot\nabla)\dot\Delta_qf
-\dot\Delta_q((v\cdot\nabla)f).
$$
Then there hold
\begin{itemize}
\item[(i)] If $s=-1$
$$
\sup_q2^{-q}\|[v\cdot\nabla,\dot\Delta_q]f\|_{L^p}
\lesssim
\|\nabla v\|_{\dot B^0_{\infty,1}}\|f\|_{\dot B^{-1}_{p,\infty}}
$$
\item[(ii)] If $-1<s<1$
$$
\big(\sum_q2^{qsr}\|[v\cdot\nabla,\dot\Delta_q]f\|_{L^p}^r\big)^{1\over r}
\lesssim
\|\nabla v\|_{L^\infty}\|f\|_{\dot B^s_{p,r}}.
$$
\item[(iii)] If $s=1$
$$
\sum_q2^q\|[v\cdot\nabla,\dot\Delta_q]f\|_{L^p}
\lesssim
\|\nabla v\|_{\dot B^0_{\infty,1}}\|f\|_{\dot B^1_{p,1}}.
$$
\end{itemize}
If $f=\rot\,u,$ then the second point holds true for $s\in[1,\infty[.$
\end{proposition}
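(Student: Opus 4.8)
The plan is to combine Bony's decomposition with the divergence–free structure of $v$. Writing $v\cdot\nabla f=\sum_k v^k\partial_k f$ and applying the three–term decomposition to both $v^k\partial_k\dot\Delta_q f$ and $v^k\partial_k f$, I would split
\[
[v\cdot\nabla,\dot\Delta_q]f=\sum_k[\dot T_{v^k},\dot\Delta_q]\partial_k f+\sum_k\dot T_{\partial_k\dot\Delta_q f}v^k-\sum_k\dot\Delta_q\dot T_{\partial_k f}v^k+\sum_k\dot{\mathcal R}(v^k,\partial_k\dot\Delta_q f)-\sum_k\dot\Delta_q\dot{\mathcal R}(v^k,\partial_k f).
\]
The leading term $A_q:=\sum_k[\dot T_{v^k},\dot\Delta_q]\partial_k f$ is the genuine commutator; by \eqref{Pres_orth} only $|q'-q|\le 4$ contribute, and representing $\dot\Delta_q$ as convolution against $h_q=2^{2q}h(2^q\cdot)$ I would use the mean value bound $\bigl|[\dot S_{q'-1}v^k,\dot\Delta_q]g(x)\bigr|\le\|\nabla\dot S_{q'-1}v\|_{L^\infty}\int|h_q(x-y)|\,|x-y|\,|g(y)|\,dy$ with $\||z|h_q\|_{L^1}\approx 2^{-q}$. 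Since $\partial_k\dot\Delta_{q'}f$ carries a factor $2^{q'}\approx 2^q$ that cancels this gain, Bernstein's inequality (Lemma \ref{lb}) gives $\|A_q\|_{L^p}\lesssim\|\nabla v\|_{L^\infty}\sum_{|q'-q|\le 4}\|\dot\Delta_{q'}f\|_{L^p}$, acceptable in all three regimes after taking the $\ell^r$ norm.

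For the four remaining pieces I would argue by the support properties. The term $\dot T_{\partial_k\dot\Delta_q f}v^k$ involves only frequencies $q'\gtrsim q$ of $v$ paired with the single block $\dot\Delta_q f$, and the diagonal remainder $\dot{\mathcal R}(v^k,\partial_k\dot\Delta_q f)$ involves $|q'-q|\le 2$; bounding $\|\dot\Delta_{q'}v\|_{L^\infty}\approx 2^{-q'}\|\dot\Delta_{q'}\nabla v\|_{L^\infty}$ and summing the geometric series gives $\lesssim\|\nabla v\|_{L^\infty}\|\dot\Delta_q f\|_{L^p}$, harmless for every $s$. Keeping the derivative on $f$, the low–high paraproduct $\dot\Delta_q\dot T_{\partial_k f}v^k$ produces the low–frequency sum $\sum_{j\le q-2}2^{(q-j)(s-1)}\,2^{js}\|\dot\Delta_j f\|_{L^p}$, which converges \emph{precisely} because $s<1$; this is what forces the upper endpoint restriction. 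The delicate point is the remainder $\dot\Delta_q\dot{\mathcal R}(v^k,\partial_k f)$, which at face value loses a derivative and only converges for $s>0$. To reach $s>-1$ I would exploit $\dv\,v=0$, writing $\sum_k\dot{\mathcal R}(v^k,\partial_k f)=\sum_k\partial_k\dot{\mathcal R}(v^k,f)$ since $\dot{\mathcal R}(\dv\,v,f)=0$; moving the derivative outside restores a factor $2^{(q-q')(s+1)}$ in the sum over $q'\ge q-3$, summable exactly when $s>-1$. This divergence–free cancellation is the crux, and the step I expect to require the most care.

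At the two endpoints the scalar $\ell^r$ summation becomes borderline, which is why the statement upgrades $\|\nabla v\|_{L^\infty}$ to $\|\nabla v\|_{\dot B^0_{\infty,1}}=\sum_{q'}\|\dot\Delta_{q'}\nabla v\|_{L^\infty}$ in (i) and (iii). For $s=1$ I would leave $\|\dot\Delta_{q'}\nabla v\|_{L^\infty}$ unevaluated and interchange the order of summation: because $|q-q'|\le 4$ in $A_q$ and in the low–high term, $\sum_q 2^q(\cdots)$ collapses to $\sum_{q'}\|\dot\Delta_{q'}\nabla v\|_{L^\infty}\,\|f\|_{\dot B^1_{p,1}}$, which is finite. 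The case $s=-1$ is the dual $\sup_q$ statement, handled by the same pieces, the summability now coming from the $v$–side norm $\dot B^0_{\infty,1}$ rather than from $f\in\dot B^{-1}_{p,\infty}$.

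Finally, for the extension to $s\in[1,\infty)$ under $f=\rot\,u$, the sole obstruction is again $\dot\Delta_q\dot T_{\partial_k f}v^k$, whose low–frequency sum diverges once $s\ge 1$. Here I would instead invoke the divergence–free identity $\sum_k\dot T_{\partial_k f}v^k=\sum_k\partial_k\dot T_f v^k$ and the curl structure of $f$ to re-express the low–frequency factor $\dot S_{q'-1}f$ through its primitive $u$, so that the low frequencies of $f$ are controlled by the ambient norms of $u$ furnished by the setting; this compensates the borderline loss and closes the estimate for all $s\ge 1$.
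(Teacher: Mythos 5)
Your decomposition and term-by-term treatment coincide with the paper's own proof: the same five-term Bony splitting, the classical kernel/mean-value bound for $[\dot T_{v^k},\dot\Delta_q]\partial_k f$ (the paper simply cites Chemin for this), geometric-series bounds for the high--low term $\dot T_{\partial_k\dot\Delta_q f}v^k$ and the diagonal remainder, the constraint $s<1$ produced by $\dot\Delta_q\dot T_{\partial_k f}v^k$, the divergence-free relocation of the derivative in $\dot\Delta_q\dot{\mathcal R}(v^k,\partial_k f)$ yielding $s>-1$, and the upgrade to $\|\nabla v\|_{\dot B^0_{\infty,1}}$ at both endpoints $s=\pm1$. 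One remark worth keeping: the paper's written proof stops there and never addresses the final claim ($f=\rot u$, $s\in[1,\infty[$), whereas your last paragraph supplies a correct mechanism for it --- just be aware that it only closes with the stated right-hand side $\|\nabla v\|_{L^\infty}\|f\|_{\dot B^s_{p,r}}$ when $u=v$ (the situation in which Corollary \ref{saoussen} invokes it), since one needs both $\|\dot S_{q'-1}\rot u\|_{L^\infty}\lesssim\|\nabla v\|_{L^\infty}$ and the Biot--Savart conversion $\|\dot\Delta_{q'}v\|_{L^p}\lesssim 2^{-q'}\|\dot\Delta_{q'}f\|_{L^p}$ to land back on norms of $f$ and $\nabla v$.
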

\begin{proof}
Thanks to Bony's decomposition, we write
$$
[v\cdot\nabla,\dot\Delta_q] f =[T_{v^j},\dot\Delta_q]\partial_jf
+T_{\partial_j\dot\Delta_qf}v^j
-\dot\Delta_qT_{\partial_jf}v^j
+R(v^j,\dot\Delta_q\partial_jf)-\dot\Delta_qR(v^j,\partial_jf).
$$
For every $s\in\RR,$ by a classical inequality about commutators we have
(see for example \cite{chemin}) 
$$
\begin{aligned}
\big\{\sum_q 2^{qsr}\|[T_{v^j},\dot\Delta_q]
\partial_jf\|_{L^p}^r\big\}^{1\over r}
&\le
\big\{\sum_q 2^{qsr}\|[T_{v^j},\dot\Delta_q]
\partial_jf\|_{L^p}^r\big\}^{1\over r}
\\&
\lesssim
\|\nabla v\|_{L^\infty}\|f\|_{\dot B^s_{p,r}}.
\end{aligned}
$$
For the paraproduct term, we have
$$
T_{\partial_j\dot\Delta_qf}v^j=\sum_{q\le q'}S_{q'-1}\dot\Delta_q\partial_jf\dot\Delta_{q'}v^j,
$$
Applying Bernstein and H\"older inequalities leads to
$$
\begin{aligned}
\Big\|\sum_{q\le q'}\dot S_{q'-1}\dot\Delta_q\partial_jf\dot\Delta_{q'}v^j
\Big\|_{L^p}
&\lesssim
\sum_{q\le q'}
\|\dot\Delta_{q'}v^j\|_{L^\infty}\|\|\dot\Delta_q\partial_jf\|_{L^p}
\\&
\lesssim
\sum_{q\le q'}2^{q-q'}
\|\dot\Delta_{q'}\nabla v\|_{L^\infty}\|\dot\Delta_qf\|_{L^p}.
\end{aligned}
$$
Therefore
$$
\big\{\sum_q 2^{qsr}\|T_{\partial_j\dot\Delta_qf}
v^j\|_{L^p}^r\big\}^{1\over r}
\lesssim
\|\nabla v\|_{\dot B^0_{\infty,\infty}}\|f\|_{\dot B^s_{p,r}}\qquad\forall\,s\in\RR.
$$
Concerning the term $\dot\Delta_qT_{\partial_jf}v^j,$ we have
$$
\dot\Delta_qT_{\partial_jf}v^j
=
\sum_{|q-q'|\le4}\Delta_q(S_{q'-1}\partial_jf\dot\Delta_{q'}v^j).
$$
From the definition of $\dot S_{q'-1}$ and applying Bernstein inequality, we obtain
$$
2^{qs}\|\dot\Delta_qT_{\partial_jf}v^j\|_{L^p}
\lesssim
\sum_{|q-q'|\le4}\|\dot\Delta_{q'}\nabla v\|_{L^\infty}
\sum_{k\le q'-2}2^{(k-q')(1-s)}2^{ks}\|\Delta_kf\|_{L^p}.
$$
Thus
$$
\big(\sum_q 2^{qsr}\|\dot\Delta_qT_{\partial_jf}v^j
\|_{L^p}^r\big)^{1\over r}
\lesssim
\|\nabla v\|_{\dot B^0_{\infty,\infty}}
\|f\|_{\dot B^s_{p,r}} \quad\quad\forall\;s<1
$$
and for $s=1$
$$
\sum_q 2^q\|\dot\Delta_qT_{\partial_jf}v^j\|_{L^p}
\lesssim
\|\nabla v\|_{\dot B^0_{\infty,1}}\|f\|_{\dot B^1_{p,1}}.
$$
Finally divergence free of $v,$ implies
$$
\begin{aligned}
 R(v^j,\dot\Delta_q\partial_jf)-\dot\Delta_qR(v^j,\partial_jf)
&=\sum_{q'\geq q-3}\dot\Delta_{q'}v^j\widetilde{\dot\Delta}_{q'}
\dot\Delta_q\partial_jf-\dot\Delta_q\partial_j\big(\dot\Delta_{q'}v^j
\widetilde{\dot\Delta}_{q'}f\big)
:=\hbox{I}_q.
\end{aligned}
$$
From H\"older and  Bernstein inequalities, we deduce
$$
\big(\sum_q 2^{qsr}\|\hbox{I}_q\|_{L^p}^r\big)^{1\over r}
\lesssim
\|\nabla v\|_{\dot B^0_{\infty,\infty}}
\|f\|_{\dot B^s_{p,r}} \quad\quad\hbox{if}\quad s>-1,
$$
$$
\sup_q2^{-q}\|\hbox{I}_q\|_{L^p}
\lesssim
\|\nabla v\|_{\dot B^0_{\infty,1}}\|f\|_{\dot B^{-1}_{p,\infty}}
\quad\quad\hbox{if}\quad s=-1.
$$
The proof is now achieved.
 \end{proof}
An immediate corollary of the above lemma is.
\begin{corollary}\label{saoussen}
Let $s\in ]-1,1[$, $(p,r)\in\ [1,\infty]^2$ and $u$ be a smooth divergence
free vector field. Let $f$ a smooth solution of the transport equation.
$$
\partial_t f+ u.\nabla f=g,\qquad f_{|t=0}=f_0,
$$
such that $f_0\in \dot B_{p,r}^s(\RR^2)$ and $g\in \ L^1_{loc}(\RR_+;\, \dot B_{p,r}^s).$ Then
\begin{equation}\label{tr}
||f(t)||_{\dot B_{p,r}^s}
\leq Ce^{CU(t)} 
\bigl(||f_0||_{\dot B_{p,r}^s}+\int_0^t e^{-CU(\tau)}||g(\tau)||_{ \dot B_{p,r}^s}d\tau\bigr)
\qquad\forall\, t\in \RR_+.
\end{equation}
where $U(t)=\int_0^t ||\nabla u(\tau)||_{L^{\infty}} d\tau$ and $C$ constant depends on $s$.\\
The above estimate holds also true in the limit cases :
$$
s=-1,\ r=\infty,\ p\in\ [1,\infty]\ \mbox{or}\ s=1,\ r=1,\ p\in \ [1,\infty],
$$
provided that we change $U(t)$ by $U_1(t):= ||u||_{L_t^1\dot B_{\infty,1}^1}.$\\
In addition if $f=\rot\,u,$ then the above estimate \eqref{tr} holds true for all $s\in[1,+\infty[.$
\end{corollary}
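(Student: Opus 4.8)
The plan is to run the standard frequency-localized $L^p$ energy estimate for the transport equation and then close it with the commutator bounds of Proposition \ref{Commutateur}. First I would apply $\dot\Delta_q$ to the equation $\partial_t f + u\cdot\nabla f = g$ and rewrite it as
$$
\partial_t \dot\Delta_q f + u\cdot\nabla \dot\Delta_q f = \dot\Delta_q g + [u\cdot\nabla,\dot\Delta_q]f,
$$
where the commutator $[u\cdot\nabla,\dot\Delta_q]f = u\cdot\nabla\dot\Delta_q f - \dot\Delta_q(u\cdot\nabla f)$ is exactly the object estimated in Proposition \ref{Commutateur}.

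Next I would perform an $L^p$ energy estimate on $\dot\Delta_q f$: multiplying by $|\dot\Delta_q f|^{p-2}\dot\Delta_q f$ and integrating over $\RR^2$, the transport term vanishes after integration by parts because $\dv\, u = 0$, which leaves
$$
\frac{d}{dt}\|\dot\Delta_q f\|_{L^p} \le \|\dot\Delta_q g\|_{L^p} + \|[u\cdot\nabla,\dot\Delta_q]f\|_{L^p}.
$$
Integrating in time then yields a bound on $\|\dot\Delta_q f(t)\|_{L^p}$ in terms of $\|\dot\Delta_q f_0\|_{L^p}$ and the time integrals of the two source terms. I would multiply by $2^{qs}$, take the $\ell^r$ norm in $q$, and use Minkowski's inequality to move the $\ell^r$ norm past the time integral; Proposition \ref{Commutateur}(ii) then controls the commutator contribution by $\int_0^t \|\nabla u(\tau)\|_{L^\infty}\|f(\tau)\|_{\dot B^s_{p,r}}\,d\tau$ for $-1<s<1$. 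This produces the integral inequality
$$
\|f(t)\|_{\dot B^s_{p,r}} \le \|f_0\|_{\dot B^s_{p,r}} + \int_0^t \|g(\tau)\|_{\dot B^s_{p,r}}\,d\tau + C\int_0^t \|\nabla u(\tau)\|_{L^\infty}\|f(\tau)\|_{\dot B^s_{p,r}}\,d\tau,
$$
and a Gronwall argument, applied conveniently to the quantity $e^{-CU(t)}\|f(t)\|_{\dot B^s_{p,r}}$ with $U(t)=\int_0^t\|\nabla u\|_{L^\infty}\,d\tau$, delivers the stated estimate \eqref{tr} with the weight $e^{-CU(\tau)}$ inside the integral.

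For the endpoint cases the argument is unchanged except for which commutator bound is invoked: for $s=-1$, $r=\infty$ I would use Proposition \ref{Commutateur}(i) and for $s=1$, $r=1$ Proposition \ref{Commutateur}(iii), both of which replace $\|\nabla u\|_{L^\infty}$ by $\|\nabla u\|_{\dot B^0_{\infty,1}}\approx\|u\|_{\dot B^1_{\infty,1}}$; this is precisely what forces the substitution of $U$ by $U_1(t)=\|u\|_{L^1_t\dot B^1_{\infty,1}}$ in the Gronwall step. The case $f=\rot\,u$ with $s\in[1,\infty[$ follows identically, invoking the last sentence of Proposition \ref{Commutateur}. Since the genuine analytic difficulty, the control of the commutator, has been entirely delegated to Proposition \ref{Commutateur}, no real obstacle remains at this stage; the only points demanding care are the vanishing of the transport term via the divergence-free hypothesis and the Minkowski/Gronwall bookkeeping that upgrades the differential inequality into the precise exponential form.
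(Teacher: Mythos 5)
Your proposal is correct and is essentially the argument the paper intends: the paper offers no separate proof, presenting the corollary as an immediate consequence of Proposition \ref{Commutateur}, and your frequency-localization, commutator rewriting, $\ell^r$ summation and weighted Gronwall step is precisely the standard way to make that "immediate" rigorous. The only minor point worth noting is that for $p=\infty$ the pairing against $|\dot\Delta_q f|^{p-2}\dot\Delta_q f$ should be replaced by the maximum principle for the transport equation along the (divergence-free, hence measure-preserving) flow, which yields the same differential inequality for $\|\dot\Delta_q f\|_{L^\infty}$.
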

\begin{proposition}\label{proposition}
Let  $f\in\mathcal{S}'(\RR^2),$ then\\
i)
$$
x_1\dot \Delta_jf=\dot \Delta_j (y_1f)+\tilde h^1_j\ast f
\qquad\mbox{and}\qquad
x_2\dot \Delta_jf=\dot \Delta_j (y_2f)+\tilde h^2_j\ast f
 $$
with 
$$
\tilde{h}^1_j(x)=2^{-j}x_{1}h(2^jx)\qquad\mbox{and}\qquad \tilde{h}^2(x)=2^{-j}x_{2}h(2^jx).
$$ 
In addition
$$
\dot\Delta_q(x_1\dot \Delta_j \omega)
=\dot\Delta_q(x_2\dot \Delta_j \omega)
=0,\qquad\mbox{for}\quad |j-q|\geq5.
$$
ii) If $f \in\dot B^0_{2,1}\cap \dot B^0_{1,1},$ then
$$
\|x_1\dot \Delta_jf-\dot \Delta_j (y_1f)\|_{L^\infty}
+\|x_2\dot \Delta_jf-\dot \Delta_j (y_2f)\|_{L^\infty}
\lesssim
c_j
\left\{\begin{array}{l}
\displaystyle\|f\|_{\dot B^{0}_{2,1}},\\
\displaystyle2^j\|f\|_{\dot B^{0}_{1,1}},
\end{array}
\right.
$$
$$
\|x_1\dot \Delta_jf-\dot \Delta_j (y_1f)\|_{\dot B^1_{\infty,1}}
+\|x_2\dot \Delta_jf-\dot \Delta_j (y_2f)\|_{\dot B^1_{\infty,1}}
\lesssim
c_j2^j\|f\|_{\dot B^{0}_{2,1}},
$$
$$
\|x_1\dot \Delta_jf-\dot \Delta_j (y_1f)\|_{\dot B^{-1}_{\infty,1}}
+\|x_2\dot \Delta_jf-\dot \Delta_j (y_2f)\|_{\dot B^{-1}_{\infty,1}}
\lesssim
c_j2^{-j}\|f\|_{\dot B^{0}_{2,1}},
$$
$$
\|x_1\dot \Delta_jf-\dot \Delta_j (y_1f)\|_{L^2}
+\|x_2\dot \Delta_jf-\dot \Delta_j (y_2f)\|_{L^2}
\lesssim
c_j
\|f\|_{\dot B^{0}_{1,1}},
$$
$$
\|x_1\dot \Delta_jf-\dot \Delta_j (y_1f)\|_{\dot B^1_{2,1}}
+\|x_2\dot \Delta_jf-\dot \Delta_j (y_2f)\|_{\dot B^1_{2,1}}
\lesssim
c_j2^j\|f\|_{\dot B^{0}_{1,1}},
$$
$$
\|x_1\dot \Delta_jf-\dot \Delta_j (y_1f)\|_{\dot B^{-1}_{2,1}}
+\|x_2\dot \Delta_jf-\dot \Delta_j (y_2f)\|_{\dot B^{-1}_{2,1}}
\lesssim
c_j2^{-j}\|f\|_{\dot B^{0}_{1,1}}
$$
with $c_j\in\ell^1(\ZZ).$
\end{proposition}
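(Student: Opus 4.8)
The plan is to read part (i) off the explicit convolution kernel of $\dot\Delta_j$, and then to derive every bound in part (ii) from three scale-dependent norms of the resulting commutator kernel, combined with Young's inequality and the fact that the commutator is spectrally localized near $2^j$.

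For part (i), recall that $\dot\Delta_jf=2^{2j}h(2^j\cdot)\star f$ with $\hat h=\varphi$. Writing this out and splitting the coordinate factor as $x_i=(x_i-y_i)+y_i$ gives
$$
x_i\dot\Delta_jf(x)=\int 2^{2j}(x_i-y_i)h\bigl(2^j(x-y)\bigr)f(y)\,dy+\int 2^{2j}h\bigl(2^j(x-y)\bigr)\,y_if(y)\,dy .
$$
The second integral is exactly $\dot\Delta_j(y_if)$, and the first is the convolution $\tilde h^i_j\star f$ with the rescaled Schwartz profile $\tilde h^i_j(z)=2^{2j}z_i\,h(2^jz)$, which is the claimed identity. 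For the spectral statement I would note that $\widehat{x_i\dot\Delta_j\omega}=i\,\partial_{\xi_i}\bigl[\varphi(2^{-j}\xi)\hat\omega\bigr]$ is still supported in the annulus $2^j\mathcal C$, since differentiation in $\xi$ does not enlarge supports; the almost-orthogonality property \eqref{Pres_orth} then forces $\dot\Delta_q(x_i\dot\Delta_j\omega)=0$ whenever $|j-q|\ge5$.

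The analytic core is the scaling of $\tilde h^i_j$. Using $z_ih\in\mathcal S$ and the substitution $u=2^jz$ I obtain
$$
\|\tilde h^i_j\|_{L^1}\simeq 2^{-j}\|x_ih\|_{L^1},\qquad \|\tilde h^i_j\|_{L^2}\simeq\|x_ih\|_{L^2},\qquad \|\tilde h^i_j\|_{L^\infty}\simeq 2^{j}\|x_ih\|_{L^\infty},
$$
the scale invariance of the $L^2$ norm being precisely what produces the $\dot B^0_{2,1}$ bounds. Moreover $\widehat{\tilde h^i_j}$ is supported in $2^j\mathcal C$, so $g^i_j:=\tilde h^i_j\star f$ satisfies $\dot\Delta_kg^i_j\equiv0$ for $|k-j|\ge N_0$ with $N_0$ fixed; hence for every $s$ one has the reduction $\|g^i_j\|_{\dot B^s_{p,r}}\lesssim 2^{js}\|g^i_j\|_{L^p}$, and $g^i_j=\tilde h^i_j\star\widetilde{\dot\Delta}_jf$, where $\widetilde{\dot\Delta}_j=\sum_{|k-j|\le N_0}\dot\Delta_k$ acts as the identity on the spectrum of $g^i_j$.

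Part (ii) then follows by Young's inequality applied to the localized data. The two $L^\infty$ bounds come from $\|g^i_j\|_{L^\infty}\le\|\tilde h^i_j\|_{L^2}\|\widetilde{\dot\Delta}_jf\|_{L^2}\lesssim c_j\|f\|_{\dot B^0_{2,1}}$ and $\|g^i_j\|_{L^\infty}\le\|\tilde h^i_j\|_{L^\infty}\|\widetilde{\dot\Delta}_jf\|_{L^1}\lesssim 2^jc_j\|f\|_{\dot B^0_{1,1}}$, and the $L^2$ bound from $\|g^i_j\|_{L^2}\le\|\tilde h^i_j\|_{L^2}\|\widetilde{\dot\Delta}_jf\|_{L^1}\lesssim c_j\|f\|_{\dot B^0_{1,1}}$, where $c_j:=\sum_{|k-j|\le N_0}\|\dot\Delta_kf\|_{L^p}/\|f\|_{\dot B^0_{p,1}}$ lies in $\ell^1(\ZZ)$. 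Feeding these into the reduction $\|g^i_j\|_{\dot B^{\pm1}_{p,1}}\lesssim 2^{\pm j}\|g^i_j\|_{L^p}$ yields the four remaining inequalities. I expect no genuine analytic obstacle here: the entire difficulty is the bookkeeping of the three kernel norms and the verification that the shifted sequences $c_j$ remain summable after the spectral localization.
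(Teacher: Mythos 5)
Your proof is correct and follows essentially the same route as the paper's: the direct kernel computation for part (i), the observation that the commutator kernel has Fourier support in the annulus $2^j\mathcal{C}$ (so only the blocks $\dot\Delta_kf$ with $|k-j|$ bounded contribute), and Young's inequality with the scaled kernel norms for part (ii). The only differences are cosmetic --- you use $L^2$--$L^2$ and $L^\infty$--$L^1$ pairings where the paper uses the $L^1$ kernel bound plus Bernstein, you write out all six inequalities where the paper proves one and says ``similar'', and your normalization $\tilde h^i_j(z)=2^{2j}z_i h(2^jz)$ is in fact the consistent one (the paper's stated $2^{-j}x_1h(2^jx)$ carries a harmless misprint in the power of $2$).
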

\begin{proof}
i) We write by definition
\begin{eqnarray*}
x_1\dot \Delta_jf(x)-\dot \Delta_j (y_1f)(x)&=&2^{2j}\int_{\RR^2}h(2^j(x-y))(x_{1}-y_{1})f(y)dy \\
 &=&2^{2j}\tilde{h}^1_j\star f(x),
\end{eqnarray*}
with $\tilde{h}^1_j(x)=2^{-j}x_{1}h(2^jx).$ This complete the proof of i)\\
ii) Now we claim that for every $f\in\mathcal{S}'$ we have
$$
2^{2j}\tilde{h}(2^j\cdot)\star f=\sum_{|j-k|\leq 1}2^{2j}\tilde{h}(2^j\cdot)\star\dot\Delta_{k}f.  
$$
Indeed, we have  $\widehat{\tilde h}(\xi)=i\partial_{\xi_{1}}\widehat{h}(\xi)=i\partial_{\xi_{1}}\varphi(\xi).  $ It follows that $supp\, \widehat{\tilde h}\subset supp\, \varphi.  $ So we get 
$$
2^{2j}\tilde{h}(2^j\cdot)\star\dot\Delta_{k}f=0,\qquad\mbox{for $|j-k|\geq 2.$}  
$$
This leads to
\begin{eqnarray*}
\nonumber\sum_{j\in\ZZ}\|x_1\dot \Delta_jf-\dot \Delta_j (y_1f)\|_{L^\infty}
&\lesssim&
\sum_{|j-k|\leq 1}2^{k-j}2^{-k}\|\Delta_{k}f\|_{L^\infty}
\\
&\lesssim& 
\|f\|_{\dot B_{\infty,1}^{-1}}
\\
&\lesssim&
\|f\|_{\dot B_{2,1}^{0}}.  
\end{eqnarray*}
Similar for same inequalities. The proof is now achieved. 
\end{proof}
We follow the same proof of the previous proposition and we use the fact that
$$
x_ix_j-y_iy_j=(x_i-y_i)(x_j-y_j)+(x_i-y_i)y_j+(x_j-y_j)y_i,
$$
we obtain the following corollary. 
\begin{corollary}\label{CM}
If $(f,y_if) \in\dot B^0_{1,1}\times\dot B^0_{2,1},$ then for $1\le i,j\le2,$ we have
$$
\|x_ix_j\dot \Delta_jf-\dot \Delta_j (y_iy_jf)\|_{L^\infty}
\lesssim
c_j
\bigl(\|f\|_{\dot B^{0}_{1,1}}+\|y_1f\|_{\dot B^{0}_{2,1}}+\|y_2f\|_{\dot B^{0}_{2,1}}\bigr),
$$
$$
\|x_ix_j\dot \Delta_jf-\dot \Delta_j (y_iy_jf)\|_{\dot B^1_{\infty,1}}
\lesssim
c_j2^j\bigl(\|f\|_{\dot B^{0}_{1,1}}+\|y_1f\|_{\dot B^{0}_{2,1}}+\|y_2f\|_{\dot B^{0}_{2,1}}\bigr)
$$
and
$$
\|x_ix_j\dot \Delta_jf-\dot \Delta_j (y_iy_jf)\|_{\dot B^{-1}_{\infty,1}}
\lesssim
c_j2^{-j}\bigl(\|f\|_{\dot B^{0}_{1,1}}+\|y_1f\|_{\dot B^{0}_{2,1}}
+\|y_2f\|_{\dot B^{0}_{2,1}}\bigr)
$$
with $c_j\in\ell^1(\ZZ).$
\end{corollary}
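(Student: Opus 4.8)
The plan is to follow verbatim the proof of Proposition \ref{proposition}, converting the multiplication commutator into a convolution against an explicit, spectrally localized kernel, and then to exploit the algebraic identity stated just before the corollary. To avoid the index clash in the statement, write the two spatial indices as $a,b\in\{1,2\}$ (the statement's $i,j$) and reserve $j$ for the dyadic index. Starting from $\dot\Delta_jf(x)=2^{2j}\int_{\RR^2}h(2^j(x-y))f(y)\,dy$ one gets
$$
x_ax_b\dot\Delta_jf(x)-\dot\Delta_j(y_ay_bf)(x)=2^{2j}\int_{\RR^2}h(2^j(x-y))\,\bigl(x_ax_b-y_ay_b\bigr)f(y)\,dy,
$$
so the identity $x_ax_b-y_ay_b=(x_a-y_a)(x_b-y_b)+(x_a-y_a)y_b+(x_b-y_b)y_a$ splits the commutator into three convolutions $K^{A}_j\ast f+K^{B}_j\ast(y_bf)+K^{C}_j\ast(y_af)$, the kernels being obtained from $h$ by inserting the factors $(2^jz_a)(2^jz_b)$, $(2^jz_a)$, $(2^jz_b)$ together with the matching powers of $2^{-j}$. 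Exactly as in Proposition \ref{proposition}, each kernel is frequency localized at scale $j$: the Fourier transforms of $w_aw_bh$, $w_ah$, $w_bh$ are $-\partial_{\xi_a}\partial_{\xi_b}\varphi$, $i\partial_{\xi_a}\varphi$, $i\partial_{\xi_b}\varphi$, all supported in $\operatorname{supp}\varphi\subset\mathcal C$; hence $\dot\Delta_k(K^{\bullet}_j\ast g)=0$ for $|k-j|\ge2$ and each convolution equals $K^{\bullet}_j\ast\widetilde{\dot\Delta}_jg$.

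For the diagonal term I would use the scaling $K^A_j(z)=G(2^jz)$ with $G=w_aw_bh$, so that $\|K^A_j\|_{L^1}\lesssim 2^{-2j}$. Combining the frequency localization with Bernstein's $L^1\to L^\infty$ inequality (Lemma \ref{lb}, a gain $2^{2j}$ in $\RR^2$) gives
$$
\|K^A_j\ast f\|_{L^\infty}\lesssim 2^{-2j}\|\widetilde{\dot\Delta}_jf\|_{L^\infty}\lesssim\|\widetilde{\dot\Delta}_jf\|_{L^1}=:c_j\|f\|_{\dot B^0_{1,1}},
$$
with $c_j\in\ell^1(\ZZ)$ by definition of the $\dot B^0_{1,1}$ norm. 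For the two mixed terms the kernels lose only one power, $\|K^B_j\|_{L^1}\lesssim 2^{-j}$, and they act on $y_bf$ (resp. $y_af$); here I would instead use Bernstein's $L^2\to L^\infty$ bound (gain $2^{j}$ in $\RR^2$):
$$
\|K^B_j\ast(y_bf)\|_{L^\infty}\lesssim 2^{-j}\|\widetilde{\dot\Delta}_j(y_bf)\|_{L^\infty}\lesssim\|\widetilde{\dot\Delta}_j(y_bf)\|_{L^2}=:c_j\|y_bf\|_{\dot B^0_{2,1}},
$$
and symmetrically for $K^C_j\ast(y_af)$. Summing the three contributions (and taking $c_j$ to be the sum of the three $\ell^1$ sequences) yields the first, $L^\infty$, estimate; the case $a=b$ is identical since $x_a^2-y_a^2=(x_a-y_a)^2+2(x_a-y_a)y_a$ has the same structure, producing $\|f\|_{\dot B^0_{1,1}}+\|y_af\|_{\dot B^0_{2,1}}$.

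Finally, since each of $K^A_j\ast f$, $K^B_j\ast(y_bf)$, $K^C_j\ast(y_af)$ has Fourier support in $2^j\mathcal C$, the whole commutator is spectrally supported there, so for any fixed $s$ only the blocks $|k-j|\le1$ survive and
$$
\|g\|_{\dot B^s_{\infty,1}}=\sum_{|k-j|\le1}2^{ks}\|\dot\Delta_kg\|_{L^\infty}\lesssim 2^{js}\|g\|_{L^\infty}.
$$
Applying this with $s=1$ and $s=-1$ to the commutator upgrades the first estimate into the $\dot B^1_{\infty,1}$ bound (extra factor $2^j$) and the $\dot B^{-1}_{\infty,1}$ bound (extra factor $2^{-j}$), which are exactly the remaining two claims. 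The only genuinely delicate point is the consistent bookkeeping of the dyadic gains $2^{-2j}$ versus $2^{-j}$ against the two different Bernstein exponents: it is precisely this accounting that forces the diagonal term to be measured in $\dot B^0_{1,1}$ and the two mixed terms in $\dot B^0_{2,1}$, and once it is checked the rest is a routine transcription of Proposition \ref{proposition}.
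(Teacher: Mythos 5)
Your proof is correct and is precisely the argument the paper intends: the identity $x_ix_j-y_iy_j=(x_i-y_i)(x_j-y_j)+(x_i-y_i)y_j+(x_j-y_j)y_i$ is exactly what the paper invokes just before the corollary, and your conversion into three spectrally localized kernels with the $2^{-2j}$ versus $2^{-j}$ scalings matched against the $L^1\to L^\infty$ and $L^2\to L^\infty$ Bernstein gains is the same bookkeeping as in Proposition \ref{proposition}, with the annular spectral support giving the $\dot B^{\pm1}_{\infty,1}$ bounds for free. You have in fact supplied more detail than the paper, which leaves the whole verification to the reader.
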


\end{document}